\DeclareMathOperator*{\argmin}{argmin}
\newtheorem{satz}{Satz}
\theoremstyle{definition}
\newtheorem{example}{Example}
\newtheorem{remark}[example]{Remark}
\newtheorem*{remark*}{Remark}
\newtheorem{assumption}{Assumption}
\theoremstyle{plain}
\newtheorem{theorem}[satz]{Theorem}
\newtheorem{proposition}[satz]{Proposition}
\newtheorem{lemma}{Lemma}
\newcommand{\mc}{\mathcal}
\newcommand{\dx}{\mathrm{d}}
\newcommand{\dd}{\mathrm{d}}
\newcommand{\e}{\varepsilon}
\newcommand{\bs}[1]{\boldsymbol{#1}} 
\newcommand{\Quad}{\qquad \quad}
\newcommand{\expec}{{\mathbb{E}}}
\newcommand{\prob}{{\mathbb{P}}}
\newcommand{\ind}{\text{\bf{1}}} 
\newcommand{\one}{\mathbbm{1}}
\newcommand{\floor}[1]{\left\lfloor#1\right\rfloor}
\newcommand{\ceil}[1]{\left\lceil#1\right\rceil}
\newcommand{\abs}[1]{\lvert #1 \rvert}
\newcommand{\KL}{\operatorname{KL}}
\newcommand{\card}{\operatorname{card}}
\newcommand{\Oop}[1]{{\operatorname{\mathcal{O}}\left(#1\right)}}
\newcommand{\Oph}[1]{{\operatorname{\mathcal{O}}_{\Prob, h\in ( c/\bs p, h_0]}\left(#1\right)}}
\newcommand{\cO}{\ensuremath{\mathcal{O}}} 
\newcommand{\norm}[1]{{\lVert#1\rVert}} 
\newcommand{\normb}[1]{{\big\lVert#1\big\rVert}}
\newcommand{\absb}[1]{\big|#1\big|}
\newcommand{\Abs}[1]{\Big|#1\Big|}
\newcommand{\abss}[1]{\bigg|#1\bigg|}
\newcommand{\R}{{\mathbb{R}}}
\newcommand{\N}{{\mathbb{N}}}
\newcommand{\Hclass}{\mathcal{H}(\alpha; M)}
\newcommand{\Pclass}{\mc P_T(\beta_0, C_Z)}
\newcommand*{\defeq}{\mathrel{\vcenter{\baselineskip0.5ex \lineskiplimit0pt
			\hbox{\scriptsize.}\hbox{\scriptsize.}}}%
	=}
\newcommand*{\defeql}{ = \mathrel{\vcenter{\baselineskip0.5ex   	\lineskiplimit0pt
			\hbox{\scriptsize.}\hbox{\scriptsize.}}}%
}
\newcommand{\lessneqsim}{\raisebox{-0.15cm}{~\shortstack{$\ll$ \\[-0.05cm]
			$\sim$}}~}
\newcommand{\name}[1]{\textsc{#1}}
\newcommand{\Comments}{1}
\newcommand{\mynote}[2]{\ifnum\Comments=1\textcolor{#1}{#2}\fi}
\newcommand{\mytodo}[2]{\ifnum\Comments=1%
	\todo[linecolor=#1!80!black,backgroundcolor=#1,bordercolor=#1!80!black]{#2}\fi}
\newcommand{\wjb}[2]{w_{\bs j, \bs p}(\bs {#1}; {#2})} 
\newcommand{\wkb}[2]{w_{\bs k, \bs p}(\bs {#1}; {#2})}
\newcommand{\wj}[2]{w_{j}({#1}; {#2})} 
\newcommand{\fkmin}{{f_k}_{\min}}
\newcommand{\fkmax}{{f_k}_{\max}}
\newcommand{\fmin}{{\bs f}_{\min}}
\newcommand{\fmax}{{\bs f}_{\max}}
\newcommand{\Cmax}{C_1}
\newcommand{\Clip}{C_2}
\newcommand{\Ccard}{C_3}
\newcommand{\Csum}{C_4}
\newcommand{\Cink}{C_5}
\newcommand{\leadingzero}[1]{\ifnum #1<10 0\the#1\else\the#1\fi}
\newcommand{\normz}[1]{{\left\lVert#1\right\rVert_2}}
\newcommand{\normi}[1]{{\left\lVert#1\right\rVert_\infty}}
\newcommand{\normib}[1]{{\big\lVert#1\big\rVert_\infty}}
\newcommand{\normiM}[1]{{\left\lVert#1\right\rVert_{\mathrm{M},\infty}}}
\newcommand{\E}{{\mathbb{E}}}
\newcommand{\Prob}{{\mathbb{P}}}
\newcommand{\hclass}{\mathcal{H}(\alpha,L)}
\newcommand{\oY}[1]{\bar{Y}_{#1}}
\newcommand{\oeps}[1]{\bar{\varepsilon}_{#1}}
\newcommand{\oZ}[1]{\bar{Z}_{#1}}
\newcommand{\mule}{\widehat \mu_{n,\bs p, h}}
\newcommand{\mulei}{\widehat \mu_{n,\bs p, h,i}}
\newcommand{\mulestar}{\widehat \mu_{n,\bs p, h^\star}}
\newcommand{\lokpol}{{\widehat{\bs \vartheta}_{n,\bs p,h} }}	
\newcommand{\mulp}{{\widehat{\mu}_{n,\bs p,h}^{\,\mathrm{LP},m}}}
\newcommand{\GG}{\mathcal G}
\begin{document}
	

{\title{From dense to sparse design: Optimal rates under the supremum norm for estimating the mean function in functional data analysis}
\author{Max Berger, Philipp Hermann and Hajo Holzmann\footnote{Corresponding author. Prof.~Dr.~Hajo Holzmann, Department of Mathematics and Computer Science, Philipps-Universität Marburg, Hans-Meerweinstr., 35043 Marburg, Germany} \\
\small{Department of Mathematics and Computer Science}  \\
\small{Philipps-Universität Marburg} \\
\small{\{mberger, herm, holzmann\}@mathematik.uni-marburg.de}}

\maketitle

%
		
%
}

\begin{abstract}
%
We derive optimal rates of convergence in the supremum norm for estimating the Hölder-smooth mean function of a stochastic process  which is repeatedly and discretely observed  with additional errors at fixed, multivariate, synchronous design points, the typical scenario for machine recorded functional data. 
Similarly to the optimal rates in $L_2$ obtained in \citet{cai2011optimal}, for sparse design a discretization term dominates, while in the dense case the parametric $\sqrt n$ rate can be achieved as if the $n$ processes were continuously observed without errors. 
The supremum norm is of practical interest since it corresponds to the visualization of the estimation error, and forms the basis for the construction uniform confidence bands. We show that in contrast to the analysis in $L_2$, there is an intermediate regime between the sparse and dense cases dominated by the contribution of the observation errors. Furthermore, under the  supremum norm interpolation estimators which suffice in $L_2$ turn out to be sub-optimal in the dense setting, which helps to explain their poor empirical performance.  In contrast to previous contributions involving the supremum norm, we discuss optimality even in the multivariate setting, and for dense design obtain the $\sqrt n$ rate of convergence without additional logarithmic factors. We also obtain a central limit theorem in the supremum norm, and provide simulations and real data applications to illustrate our results.   


%
%
%
%

		%
\end{abstract}
\noindent {\itshape Keywords.}\quad asymptotic confidence sets, dense and sparse design, functional data, minimax optimality, supremum norm

\section{Introduction}

Functional data analysis (FDA) refers to situations where the data involve functions or images, potentially as features with additional observations in regression and classification problems.  
The field was popularized by \citet{ramsay1998functional}, and since then has attracted the attention of many theoretical and applied statisticians. 
\citet{wang2016functional}, \citet{morris2015functional} and \citet{cuevas2014partial} contribute overviews of various aspects of the field.

Replication and regularization are two central notions in FDA \citep{morris2015functional, ramsay1998functional}. By replication one makes use of information across the repeated observations of the functions, while regularization takes advantage of information within functions, which in practice are only observed at discrete design points in time or space. 


Fixed, non-random  points which are equal across functions are refered to as a deterministic, synchronous design. This typically arises for machine recorded data, such as temperature and further weather data recorded at regular time intervals at weather stations.
In this paper, for such designs we clarify even in a multivariate setting the phase transition from dense to sparse case, by using minimax optimality over Hölder smoothness classes and the supremum norm as loss function.

The multivariate and in particular the  bivariate settings which we comprehensively deal with in the present paper evidently are of great interest, for example for repeatedly observed images. Much less work is avaible here than for the univariate case,  in which the covariate typically refers to time. A notable exception is \citet{wang2020simultaneous}, who consider images, focus on the dense case and work under a more complex set of assumptions then is considered in the present paper. 

The analysis of estimators in FDA has traditionally been conducted for $L_2$ and pointwise errors \citep{hall2006properties}. Arguably the supremum norm is of more or at least equal practical interest, since it corresponds to the visualization of the estimation error. Further it constitutes the basis for the construction uniform confidence bands, which allow for the assessment of the precision of the estimates. Therefore in  recent years much research effort has been devoted to an analysis of estimation errors in the supremum norm. Notable contributions in the random, asynchronous design are \citet{yao2005functional, li2010uniform, zhang2016sparse}. Rates in the supremum norm and asymptotic confidence bands in the deterministic, synchronous design that we focus on have been investigated by \citet{degras2011simultaneous, chang2017simultaneous, xiao2020asymptotic, kalogridis2022robust}. 

A precise assessment of the performance of methods can however only be given by providing optimality results, which we do in terms of minimax rates under the supremum norm in this paper, and which have not been previously derived in the literature.

More technically speaking, we show that in the sparse regime, a discretization term dominates rather than the estimation error arising from the errors as in the random design case. This is in line with results in \citet{cai2011optimal} for the $L_2$-loss and strong smoothness assumptions on the paths of the processes. In particular, they require that the sample paths have the same smoothness as the target mean function, a much stronger assumption than what is imposed in our work. 
Notably and in contrast to the $L_2$ norm there exists a transition phase between the sparse and dense designs when considering the supremum norm in which the contribution of the errors dominates. Our analysis also shows that simple interpolation estimators without smoothing, which according to  \citet{cai2011optimal} suffice to achieve optimal rates in $L_2$ for dense design, in case of observational errors  actually deteriorate with respect to the supremum norm for a large number of row wise observations. Also note that the rates of convergence presented in \citet{xiao2020asymptotic, kalogridis2022robust} by using penalized spline estimators  are suboptimal. In particular, the parametric $\sqrt n $-rate in the dense design is not attained, which is however very desirable for asymptotic inference.  
The parametric $\sqrt n $-rate in the dense setting allows us to provide a central limit theorem in the supremum norm, which is the basis for the construction of uniform confidence bands, e.g.~by bootstrap methods \citep{degras2011simultaneous, wang2020simultaneous} or by using bounds based on the Kac-Rice formula as recently proposed in \citet{liebl2019fast}. 
As real-data illustrations we consider the biomechanical data set from \cite{liebl2019fast} as well as  daily temperature curves at Nuremberg, Germany. The effect of coarser discretization on the estimation quality is determined by using a confidence band for the estimate based on the finest discretization, and investigating whether estimates with coarser design still lie inside this band.




The paper is organized as follows. In Section \ref{sec:model:estimators} we introduce the model and the linear estimators that we shall consider. We work with generic assumptions on the weights, and in Section \ref{sec:local:pol:estimator} verify them for weights of local polynomial estimates. Section \ref{sec:estimation:rates} contains the main results on rates of estimation of the mean function in the supremum norm. 
Upper bounds and matching lower bounds are presented. We also show convergence in distribution to a Gaussian process. 
%
In Section \ref{sec:sim} we illustrate our theoretical results in a simulation study. In Section \ref{sec:applications} we provide real-data illustrations. 
%
The concluding Section \ref{sec:discuss} describes how to transfer the results to more general designs, and points to open problems for the sparse, asynchronous case.  
 Proofs of the main results are given in Section \ref{sec:main:proofs}, while some technical proofs as well as additional simulations are deferred to an appendix. 

\subsubsection*{Notation}

In the paper we use the following notation. For $ \bs a=(a_1,\ldots,a_d)^\top$ and $ \bs b=(b_1,\ldots,b_d)^\top \in \R^d$ write $\bs a\leq \bs b$ if $a_i\leq b_i$ for all $i=1,\ldots,d$,  $\bs{a}^{\bs{b}} \defeq a_1^{b_1}\cdots a_d^{b_d}$,  $|\bs a|=a_1+\ldots+a_d$,  $\bs a!\defeq a_1!\cdots a_d!$, $ \bs a_{\min} = \min_{1\leq r \leq d} a_r$ and $\partial^{\bs{a}} \defeq \partial_1^{a_1}\ldots\partial_d^{a_d}$ if the coordinates are integers.
Further set $\bs 1\defeq (1,\ldots,1)^\top \in \{1\}^d$. Given $\bs p \defeq (p_1,\ldots,p_d)^\top$ denote by $\{\bs j \in \N^d \mid \bs 1 \le \bs j \le \bs p\}$ the set of $\bs j \in N^d$ that are component wise between $\bs 1$ and $\bs p$, and denote $\sum_{\bs j=\bs 1}^{\bs p}\defeq \sum_{j_1=1}^{p_1} \cdots \sum_{j_d=1}^{p_d}$. If for a vector the indices are written in bold letters this indexes vectors such as $\bs{p_j}\defeq (p_{j_1},\ldots,p_{j_d})^\top$. If only the indices are written bold this indexes a scalar like $\e_{i,\bs j} \defeq \e_{i,j_{1},\ldots,j_{d}}$ or $ Y_{\bs j} \defeq Y_{j_{1},\ldots,j_{d}}$.   
For sequences $(p_n), (q_n)$ tending to infinity, we write $p_n \lesssim q_n$, if $p_n =\Oop{q_n}$, $p_n \simeq q_n$ if $p_n \lesssim q_n$ and $q_n\lesssim p_n$, and $p_n \lessneqsim q_n$, if $p_n = o(q_n)$.\\
The notation $\mc O_{\prob,  h \in (h_1, \bs h_0]}$ stands for stochastic dominance uniformly for $h \in (h_1 , h_0]$. Finally, $\norm{\,\cdot\,}_\infty$ denotes the supremum norm on $[0,1]^d$ (sup-norm in the following).

\section{The model, smoothness classes and linear estimators } \label{sec:model:estimators}

Assume that we have data $(Y_{i, \bs j}, \bs x_{\bs j})$ according to the model
\begin{align}
	Y_{i, \bs j}= \mu(\bs x_{\bs j}) + Z_i(\bs x_{\bs j}) + \e_{i, \bs j} \,, \quad  i=1,\dotsc,n\,, \quad  \bs j =(j_1, \ldots, j_d) \,, \label{eq:model}
\end{align}
where $Y_{i, \bs j}$ are real-valued response variables and $x_{\bs j} \in T \subset \R^d$ the design points. The set $T$ is assumed to be a hypercube and we take $ T=[0,1]^d$ in the following. The mean function $\mu \colon T  \to \R$ is to be estimated, $\e_{i, \bs j}$ are independently distributed errors with mean zero and $Z, Z_1,\dotsc,Z_n$ are centered, independent and identically distributed, square integrable random fields on $T$ distributed as the generic $Z$. We shall assume a Cartesian product  structure $\bs x_{\bs j}=(x_{1,j_1}, \ldots, x_{d,j_d})$, $1 \leq j_k \leq p_k$, $k=1, \ldots, d$, and $x_{k,l} < x_{k,l+1}$,  $1 \leq l \leq p_k - 1$, for the covariates. The number $\bs p^{\bs 1} =\bs p^{\bs 1}(n) =\prod_{k=1}^d p_k(n)$ of design points and the design points $\bs x_{\bs j}$ depend on $n$, which is often suppressed in the notation. 
%
%
%
%

	Given $\alpha >0$ we set $\left\lfloor \alpha \right\rfloor=\max\{  k\in\N_0 \mid k<\alpha \}.$  
	A function $f\colon T \to \R$ is H\"older-smooth with index $\alpha$ if for all indices $\bs \beta=(\beta_1,\ldots,\beta_d)^\top$ with $|\bs \beta|\leq \left\lfloor \alpha \right\rfloor$ the derivatives $\partial^{\bs \beta} f(\bs x)$ exist 
	and the H\"older-norm given by
	$$\norm{f}_{\mc H, \alpha}\defeq\max_{|\bs \beta| \leq \left\lfloor \alpha \right\rfloor } \sup_{\bs x \in T} |\partial^{\bs\beta} f(\bs x)|+ \max_{|\bs \beta|=\left\lfloor \alpha \right\rfloor}\sup_{\bs x,\bs y \in T, \,\bs x\neq \bs y} \frac{|\partial^{\bs \beta} f(\bs x)-\partial^{\bs \beta} f(\bs y)|}{\norm{\bs x-\bs y}_\infty^{\alpha-\left\lfloor \alpha \right\rfloor}}$$
	is finite.  
	The H\"older class on $T$ with parameters $\alpha>0$ and $L>0$ is defined by
	\begin{equation}\label{def:hoelder:class}
		\mc H_T(\alpha, L) = \big\{f\colon T \to \R \mid \norm{f}_{\mc H, \alpha} \leq L \big\}.
	\end{equation}
%
%
%
For the distribution of the processes $Z$ we assume that $\expec[Z(\bs 0)^2] < \infty$ and that there exists a random variable $M = M_Z>0$ with $\expec [M^2] < \infty $ such that 
\begin{equation}\label{eq:hoeldercontpathsZ}
	\big|Z(\bs x)-Z(\bs y)\big| \leq M \, \norm{\bs x-\bs y}_\infty^\beta \quad \text{almost surely}
\end{equation}
for all $\bs x, \bs y \in T$, where $0 < \beta \leq 1$ is a constant.
%
%
Given $C_Z>0$ and $0 < \beta_0 \leq 1$, we consider the class of processes 
\begin{align}
	\Pclass & = \big\{ Z: T \to \R \ \text{random field} \mid \exists \ \beta \in [\beta_0,1] \text{ and } M \text{ s.th.}\nonumber\\
	& \qquad \qquad \expec [M^2] + \expec [Z(\bs 0)^2]\leq C_Z \text{ and \eqref{eq:hoeldercontpathsZ} holds}  \big\}.		\label{eq:classprocesses}
\end{align}	

\begin{remark}
    	Note that Hölder continuity of the process $Z$ in \eqref{eq:hoeldercontpathsZ} is a natural assumption when deriving the parametric $1/\sqrt n$-rate and CLTs in the supremum norm, in view of the Jain Markus Theorem \citep[Example 2.11.13]{van1996weak}. Various contributions which work without direct smootheness assumptions on the paths \citep{li2010uniform, zhang2016sparse, xiao2020asymptotic} only achieve a rate of $(\log (n)/n)^{1/2}$ in the dense setting. While this does not constitute a big difference in terms of rates, for the asymptotics one requires the exact parametric rate. Further, it is of interest when reconstruction can be conducted as if the processes were continuously observed without errors.  
	Let us point out that in contrast to \citet{cai2011optimal}, we however do not assume that the paths of the process $Z$ are as smooth as the mean function. Typically $\mu$ will be in a Hölder class of smoothness $\alpha>0$ which will be larger than $\beta$ in \eqref{eq:hoeldercontpathsZ}. 
\end{remark}

\begin{assumption}[Distributional assumptions] \label{ass:model} 
		The random variables $\{\e_{i,\bs j} \mid 1 \leq i \leq n,\, \bs 1 \le \bs j \le \bs p\}$ are independent and independent of the processes $Z_1,\dotsc,Z_n$. Further we assume that the distribution of $\e_{i,\bs j}$ is sub-Gaussian, and setting $ \sigma_{ij}^2 \defeq  \E[\e_{i,\bs j}^2]$ we have that $\sigma^2 \defeq \sup_n \max_{\bs 1 \leq \bs j \leq \bs p} \sigma_{\bs j}^2 < \infty$ and that there exists $\zeta>0$ such that $\zeta^2\sigma_{i,\bs j}^2$ is an upper bound for the  sub-Gaussian norm of $\e_{i,\bs j}$. \label{ass:errors} 
\end{assumption}

\begin{remark}
	In the case where the distribution of the additional errors $\varepsilon_{i,j}$ is not a sub-Gaussian distribution a Gaussian approximation yields similar rates of convergence under the assumption that $\expec\abs{\varepsilon_{i,j}}^q < \infty$ for some $q \geq 4$ and a additional assumption on the weights of the estimator. The resulting rate of convergence and the proof and the discussion of the dominating rates of convergence for the one-dimensional case $d=1$ can be found in the Appendix \ref{sec:app:gaussian_approximation}.
\end{remark}

%

For the mean function $\mu$, we consider linear estimators of the form
\begin{equation}\label{eq:linest:mu:bar}
		\mule(\bs x) = \sum\limits_{\bs j= \bs 1}^{\bs p}   w_{\bs j,\bs p}(\bs x;h; \bs x_{\bs 1},\dotsc,\bs x_{\bs p}) \, \oY{\bs j} \,, \qquad \text{where} \quad \oY{\bs j} = \frac1n \sum_{i=1}^nY_{i,\bs j},
\end{equation}
%
%
where $\bs x \in T$,  $w_{\bs j,\bs p}(\bs x;h; \bs x_{\bs 1},\dotsc,\bs x_{\bs p})$ are deterministic weights depending on the design points and on a bandwidth parameter $h >0$. 
We often abbreviate the notation by writing $\wjb xh \defeq w_{\bs j,\bs p}(\bs x;h; \bs x_{\bs 1},\dotsc,\bs x_{\bs p})$.
%


%

While the following assumptions on the weights are standard, note that we require them to hold also for $h$ of order $1/\bs p_{\min}$, a regime which is not relevant in traditional nonparametric regression. Here recall the notation $\boldsymbol{p}_{\text{min}} =  \min_{k} p_k$ for $\bs p = (p_1,\ldots,p_d)$.


\begin{assumption}[Weights of linear estimator]\label{ass:weights} 
There are a $c>0$ and a $h_0>0$ such that for  sufficiently large $\bs p_{\min} $,  the following holds for all $h \in (c/\bs p_{\min}, h_0]$ for suitable constants $\Cmax, \Clip>0$ which are independent of $n,\bs p,h$ and $\bs x$.
	\begin{enumerate}[label=\normalfont{(W\arabic*)},leftmargin=9.9mm]
		\item \label{prop_W} The weights reproduce polynomials of a suitable degree $\zeta \geq 1$, that is for $\bs x \in T$,
		\begin{align*} 
			\sum_{\bs 1\leq \bs j \leq \bs p} w_{\bs j, \bs p}(\bs x, h) =1\,, \quad \sum_{\bs 1 \leq \bs j \leq \bs p}(\bs x_{\bs j}-\bs x)^{\bs r} \, w_{\bs j, \bs p}(\bs x;h) = 0\,, \quad \bs r \in \N^d \text{ s.t. } |\bs r| = 1,\dotsc ,\zeta \,.
		\end{align*} \label{ass:weights:polynom}
		\item 
  We have $ w_{\bs j, \bs p}(\bs x;h)=0$ if $ \norm{\bs x_{\bs j}-\bs x}_\infty>h$ with $\bs x \in T$. 
  \label{ass:weights:vanish}
		\item	For the absolute values of the weights, $ \displaystyle \max_{\bs 1\leq \bs j \leq\bs  p} \big|w_{\bs j, \bs p}(\bs x;h)\big|  \leq \frac{\Cmax}{ \bs p^{\bs 1} h^d }$, $\bs x \in T$.  \label{ass:weights:sup}    
		
  %
 %
		%
\item For a constant $\Clip>0$ it holds that
\begin{align*}
	\big|w_{\bs j, \bs p}(\bs x;h) - w_{\bs j, \bs p}(\bs y;h) \big| \leq \frac{\Clip}{\bs p ^{\bs 1} h^d} \bigg(\frac{\norm{\bs x-\bs y}_\infty}h\wedge 1\bigg), \qquad  \bs x,\bs y \in T\,.
\end{align*} \label{ass:weights:lipschitz}
	\end{enumerate}		
\end{assumption}
Moreover, we require the following design assumption.
\begin{assumption}[Design Assumption]\label{ass:design1} 
	
	  There is a constant $\Ccard >0$ such that for each $\bs x \in T$ and $h>0$ we have that 
	$$\card \big\{\bs j \in \{\bs 1,\ldots, \bs p\} \mid \bs x - (h, \ldots, h) \leq \bs{x_j} \leq \bs x + (h, \ldots, h)\big\}  \leq \Ccard\, h^d\,\bs p^{\bs 1}. $$ 
\end{assumption}
Note that for a product design, we could formulate Assumption \ref{ass:design1} coordinatewise. However, in the present formulation it also applies to more general designs. 
In case of design densities, 
 Assumption \ref{ass:design1} is satisfied, and we check the above assumptions on the weights of local polynomial estimators in Section \ref{sec:local:pol:estimator}. 
%
%
%


%


\section{Optimal rates of estimation  for the mean function} \label{sec:estimation:rates}

\subsubsection*{Upper bounds}

%
%


We start by deriving upper bounds on the rate of convergence of the linear estimator in \eqref{eq:linest:mu:bar} under Assumption \ref{ass:model}.
Bounding the terms in the error decomposition
\begin{align}
	\mule(\bs x) - \mu(\bs x) & =  \sum_{\bs k=\bs 1}^{\bs p} \wkb xh\,(\mu(\bs x_{\bs k}) - \mu(\bs x) ) + \sum_{\bs k=\bs 1}^{\bs p} \wkb{x}{h}\, \oeps{\bs k} + \sum_{\bs k=\bs 1}^{\bs p} \wkb{x}{h}\, \oZ{n}(\bs x_{\bs k})\nonumber\\
	& \defeql  I_1^{\bs p,h}(\bs x) + I_2^{n, \bs p,h}(\bs x) + I_3^{n, \bs p,h}(\bs x) \, , \qquad \bs x \in[0,1]^d \,, \label{eq:decomposition}
\end{align}	
where 
\begin{align*}
\oeps{\bs j} = \frac1n \sum_{i=1}^n \varepsilon_{i,\bs j} \qquad \text{and} \qquad \oZ{n}(\bs x) = \frac1n \sum_{i=1}^n Z_i(\bs x) \, ,
\end{align*}
leads to the following result.

\begin{theorem} \label{theorem:estimation:rates} 
In model \eqref{eq:model} under Assumptions \ref{ass:model} and \ref{ass:design1}, consider the linear estimator $\mule$ in \eqref{eq:linest:mu:bar} with weights satisfying Assumption \ref{ass:weights} with $\gamma=\floor \alpha$ for given Hölder smoothness $\alpha>0$. Then for sufficiently large $\bs p_{\min}$ and $n$, 
\begin{equation}\label{eq:upperboundone}
	\sup_{h \in (c/{\bs p_{\min}}, h_0]} \sup_{Z \in \Pclass, \mu \in \Hclass}\, a_{n,\bs p, h}^{-1}\, \E_{\mu,Z} \Big[ \normib{ \mule - \mu} \Big] = \cO(1),
\end{equation}
where 
\begin{equation}\label{eq:orderbound}
a_{n,\bs p, h} = \max \Big(h^\alpha, \Big(\frac{\log (h^{-1})}{n\bs p^{\bs 1} h^d}\Big)^{1/2}, n^{-\frac{1}{2}} \Big).
\end{equation}
Therefore, setting
	\begin{equation}\label{eq:optimal:bandwidth}
	h^\star = \max\Big(c /\bs p_{\min}, \Big(\frac{\log(n\,\bs {p^1})}{n\,\bs{p^1}}\Big)^{\frac1{2\alpha+d}}\Big),
\end{equation}
we obtain
\begin{equation}\label{eq:optimal:rate}
	\sup_{Z \in \Pclass, \mu \in \Hclass}\,  \E_{\mu,Z} \Big[ \normib{ \mulestar - \mu} \Big] = \cO\Big( \max\Big(\bs p_{\min}^{-\alpha} , \Big(\frac{\log(n\,\bs{p^1})}{n\,\bs{p^1}}\Big)^{\frac\alpha{2\alpha+d}} , n^{-\frac{1}{2}} \Big) \Big). 
\end{equation}
\end{theorem}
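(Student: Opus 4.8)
The plan is to control the three terms in the error decomposition \eqref{eq:decomposition} separately, uniformly over $h\in(c/\bs p_{\min},h_0]$, $\mu\in\Hclass$ and $Z\in\Pclass$, and then optimize over $h$ to obtain \eqref{eq:optimal:rate} from \eqref{eq:upperboundone}.

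\emph{Bias term $I_1^{\bs p,h}$.} First I would bound $\sup_{\bs x}|I_1^{\bs p,h}(\bs x)|$ deterministically by $C\,h^\alpha$. This is the standard local-polynomial bias estimate: by \ref{ass:weights:vanish} only design points with $\norm{\bs x_{\bs j}-\bs x}_\infty\le h$ contribute; expand $\mu$ in a Taylor polynomial of degree $\floor\alpha$ around $\bs x$; the polynomial part is annihilated by the reproduction property \ref{prop_W}; the Taylor remainder is $O(\norm{\bs x_{\bs j}-\bs x}_\infty^\alpha)=O(h^\alpha)$ using $\mu\in\Hclass$; and $\sum_{\bs j}|w_{\bs j,\bs p}(\bs x;h)|$ is bounded by a constant via \ref{ass:weights:sup} together with the design Assumption \ref{ass:design1} (the number of nonzero weights is $O(h^d\bs p^{\bs 1})$, each of size $O(1/(\bs p^{\bs 1}h^d))$). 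This is routine and uniform in everything.

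\emph{Error term $I_2^{n,\bs p,h}$.} Here I would show $\E\big[\sup_{\bs x}|I_2^{n,\bs p,h}(\bs x)|\big]=O\big((\log(h^{-1})/(n\bs p^{\bs 1}h^d))^{1/2}\big)$ uniformly in $h$. For fixed $\bs x$, $I_2^{n,\bs p,h}(\bs x)=\sum_{\bs k}w_{\bs k,\bs p}(\bs x;h)\oeps{\bs k}$ is a sub-Gaussian random variable (sum of independent sub-Gaussians, by Assumption \ref{ass:model}) with variance proxy of order $\sum_{\bs k}w_{\bs k,\bs p}(\bs x;h)^2\sigma^2/n\le \sigma^2\zeta^2\,\Cmax\,\Ccard/(n\bs p^{\bs 1}h^d)=:\tau_h^2$, again combining \ref{ass:weights:sup}, \ref{ass:weights:vanish} and Assumption \ref{ass:design1}. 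To pass to the supremum I would use a chaining/covering argument on $[0,1]^d$: the Lipschitz bound \ref{ass:weights:lipschitz} gives that $\bs x\mapsto I_2^{n,\bs p,h}(\bs x)$ has sub-Gaussian increments with respect to a rescaled Euclidean metric, so a net of cardinality $O((1/h)^d)$ at resolution $\sim h$ suffices to reduce to finitely many points, and a union bound over the net yields the $\sqrt{\log(h^{-1})}$ factor; the residual oscillation between net points is controlled by the same Lipschitz bound and is of smaller order. One must be slightly careful that the covering argument is uniform in $h$ down to $h\asymp 1/\bs p_{\min}$, where the net has only $O(\bs p_{\min}^d)$ points — this is exactly the non-standard regime flagged before Assumption \ref{ass:weights}, but the bound $\log(h^{-1})\lesssim\log\bs p_{\min}\lesssim\log(n\bs p^{\bs 1})$ still goes through.

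\emph{Process term $I_3^{n,\bs p,h}$.} Here I would show $\E\big[\sup_{\bs x}|I_3^{n,\bs p,h}(\bs x)|\big]=O(n^{-1/2})$ uniformly in $h$, which is the point where the analysis departs from \citet{cai2011optimal}. Write $I_3^{n,\bs p,h}(\bs x)=\sum_{\bs k}w_{\bs k,\bs p}(\bs x;h)\oZ n(\bs x_{\bs k})$. The key observation is that, because the weights reproduce constants (\ref{prop_W}) and are supported on a $h$-neighbourhood (\ref{ass:weights:vanish}), one has $I_3^{n,\bs p,h}(\bs x)=\oZ n(\bs x)+\sum_{\bs k}w_{\bs k,\bs p}(\bs x;h)\big(\oZ n(\bs x_{\bs k})-\oZ n(\bs x)\big)$; the first summand is the continuous-record average, and the Hölder continuity \eqref{eq:hoeldercontpathsZ} with exponent $\beta\ge\beta_0$ bounds the second summand by $\oM n\,h^\beta\sum_{\bs k}|w_{\bs k,\bs p}(\bs x;h)|$ where $\oM n=n^{-1}\sum M_i$, i.e. by $O(h^\beta)$ in $L_1$ after taking expectations and using $\E M^2\le C_Z$. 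For the dominating first term, $\sup_{\bs x}|\oZ n(\bs x)|$ has expectation $O(n^{-1/2})$: each $Z_i$ is bounded on $[0,1]^d$ by $|Z_i(\bs 0)|+M_i\,d^\beta$ with second moment controlled by $C_Z$, so the centered average $\oZ n$ has $\E[\sup_{\bs x}|\oZ n(\bs x)|]^2\le\E[\sup_{\bs x}|\oZ n(\bs x)|^2]\le$ a maximal-inequality bound of order $\E[(\,|Z(\bs 0)|+M d^\beta)^2]/n=O(1/n)$, invoking the Jain--Marcus-type modulus of continuity (cited in the Remark) to turn the a.s. Hölder bound into an $L_2$ bound on the sup-norm of the empirical process $\oZ n$ that is uniform over $\Pclass$. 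Since $\beta\ge\beta_0>0$ the term $h^\beta$ is dominated by $\max(h^\alpha,n^{-1/2})$ whenever $h\le h_0$ small and $h^\beta\lesssim h^{\beta_0}$, or more precisely it is absorbed once one checks $h^{\beta}\lesssim\max(h^\alpha,(\log(h^{-1})/(n\bs p^{\bs 1}h^d))^{1/2},n^{-1/2})$ on the whole range — this needs a short case distinction but causes no trouble because for $h\ge c/\bs p_{\min}$ the discretization of $Z$ is never the bottleneck.

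\emph{Assembling and optimizing.} Combining the three bounds gives \eqref{eq:upperboundone} with $a_{n,\bs p,h}$ as in \eqref{eq:orderbound}. For \eqref{eq:optimal:rate} I then just plug in $h=h^\star$ from \eqref{eq:optimal:bandwidth}: if the first branch is active, $h^\star=c/\bs p_{\min}$, then $h^{\star\alpha}\asymp\bs p_{\min}^{-\alpha}$ dominates and the stochastic term is smaller; if the second branch is active, balancing $h^\alpha$ against $(\log(n\bs p^{\bs 1})/(n\bs p^{\bs 1}h^d))^{1/2}$ gives $h^{\star\alpha}\asymp(\log(n\bs p^{\bs 1})/(n\bs p^{\bs 1}))^{\alpha/(2\alpha+d)}$; and the term $n^{-1/2}$ is always present as a floor. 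Taking the maximum of the three yields exactly the right-hand side of \eqref{eq:optimal:rate}.

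\emph{Main obstacle.} The routine parts are the bias bound and the union bound for $I_2$. The genuinely delicate step is $I_3$: one must get the clean $\sqrt n$ rate \emph{without} extra logarithmic factors and \emph{uniformly} over the class $\Pclass$ of processes whose paths are only $\beta$-Hölder (not $\alpha$-Hölder as in \citet{cai2011optimal}). This requires (i) the reproduction-of-constants trick to isolate $\oZ n(\bs x)$, so that the roughness of $Z$ only enters through the harmless $O(h^\beta)$ remainder rather than through a sup-norm rate on a smoothed process, and (ii) a maximal inequality for $\sup_{\bs x}|\oZ n(\bs x)|$ in $L_1$ or $L_2$ that does not lose a $\sqrt{\log n}$ — achieved by controlling the metric entropy of $[0,1]^d$ under the intrinsic (sub-Gaussian, $\beta$-Hölder) metric of $\oZ n$ and noting the entropy integral converges since $\beta_0>0$, with the constant depending on the $Z$-distribution only through $C_Z$. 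Verifying that this uniformity genuinely holds over all of $\Pclass$, and that the $h^\beta$ remainder is always dominated on the range $h\in(c/\bs p_{\min},h_0]$, is where the care is needed.
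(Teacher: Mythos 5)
Your overall architecture (bound the three terms of \eqref{eq:decomposition} uniformly in $h$, then plug in $h^\star$) is exactly the paper's, and your bias bound is the paper's argument. For $I_2^{n,\bs p,h}$ the paper runs the chaining through Dudley's entropy bound for the normalized sub-Gaussian process $\sqrt{n\bs p^{\bs 1}h^d}\,I_2^{n,\bs p,h}$ under the semimetric induced by \ref{ass:weights:lipschitz}; be aware that your fallback description of a \emph{single} net at resolution $h$ plus a union bound does not close, because the within-cell oscillation $\sup_{\norm{\bs x-\bs\tau}_\infty\le h}|I_2^{n,\bs p,h}(\bs x)-I_2^{n,\bs p,h}(\bs\tau)|$ is of order $n^{-1/2}$ in expectation, not of smaller order than $(\log(h^{-1})/(n\bs p^{\bs 1}h^d))^{1/2}$ when $\bs p^{\bs 1}h^d\to\infty$; the genuinely multi-scale chaining (Dudley) is needed there.

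The substantive gap is in $I_3^{n,\bs p,h}$. Your decomposition $I_3^{n,\bs p,h}(\bs x)=\oZ{n}(\bs x)+\sum_{\bs k}\wkb{x}{h}\big(\oZ{n}(\bs x_{\bs k})-\oZ{n}(\bs x)\big)$ is valid, but bounding the remainder by $\Csum\,\bar M_n\,h^\beta$ with $\bar M_n=n^{-1}\sum_iM_i$ only yields $O(h^\beta)$ in expectation, since $\E[\bar M_n]=\E[M]$ does not decay in $n$. Because the paper explicitly allows $\beta<\alpha$ (paths rougher than the mean), $h^\beta$ is not dominated by $h^\alpha$, and it is dominated by $n^{-1/2}$ only when $h\lesssim n^{-1/(2\beta)}$, which fails at $h=h^\star$ in wide regimes: for $d=1$, $\alpha=2$, $\beta=1/2$, $p\asymp n$ one has $h^\star\asymp(n^{-2}\log n)^{1/5}$, hence $h^{\star\beta}\asymp n^{-1/5}(\log n)^{1/10}\gg n^{-1/2}=a_{n,\bs p,h^\star}$. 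So your claim that ``for $h\ge c/\bs p_{\min}$ the discretization of $Z$ is never the bottleneck'' is false, and as written the argument does not deliver \eqref{eq:upperboundone}. The fix is simpler than your decomposition: since all $\bs x_{\bs k}\in[0,1]^d$, property \ref{ass:weights:sum} alone gives $\sup_{\bs x}|I_3^{n,\bs p,h}(\bs x)|\le\Csum\sup_{\bs y\in[0,1]^d}|\oZ{n}(\bs y)|$, and the entire burden falls on $\E[\sup_{\bs y}|\oZ{n}(\bs y)|]=O(n^{-1/2})$ uniformly over $\Pclass$ --- which you correctly identify as the crux and which the paper establishes via Pollard's manageability and maximal inequality (Lemmas \ref{lem:manageability}--\ref{lem:amxboundhoelder}), essentially your Jain--Marcus-type entropy argument. (Your remainder could alternatively be salvaged by observing that $\oZ{n}(\bs x_{\bs k})-\oZ{n}(\bs x)$ is an average of $n$ independent \emph{centered} increments each bounded by $M_ih^\beta$, hence of order $h^\beta n^{-1/2}$ rather than $h^\beta$, but this requires a further uniform argument and is unnecessary.)
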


Here the bound $h^\alpha$ in \eqref{eq:orderbound} arises from the bias term $I_1^{\bs p,h}(\bs x)$ in \eqref{eq:decomposition} by standard bias estimates using property \ref{ass:weights:polynom} of the weights, the bound $n^{-1/2}$ follows from the stochastic process term $I_3^{n, \bs p,h}(\bs x) $ by using $\E[\normib{\oZ{n}}] = \cO(n^{-1/2})$ and boundedness of sums of absolute values of the weights, and the intermediate term in \eqref{eq:orderbound} from $I_2^{n, \bs p,h}(\bs x)$ upon using Dudley's entropy bound \citep[Corollary 2.2.8]{van1996weak}.
We provide the details in Section \ref{sec:proof:theorem:rates}.

\subsubsection*{Lower bounds} \label{ssec:optimality}

Next we show optimality of the upper bounds in Section \ref{sec:estimation:rates} by deriving corresponding lower bounds of estimation in the minimax sense (see \citet[Section 2]{tsybakov2008introduction}).

\begin{theorem}\label{theorem:optimality}
	Assume that in model \eqref{eq:model} the errors $\e_{i, \bs j}$ are i.i.d.~$\mathcal N(0, \sigma_0^2)$ - distributed, $\sigma_0^2 >0$. The design points $\bs{x_1},\ldots,\bs{x_p}$ shall satisfy Assumption \ref{ass:designdensity}. Then setting 
	\[ a_{n,\bs p} = \max\bigg\{ \bs p_{\min}^{-\alpha},\bigg(\frac{\log(n\,\bs{p^1})}{n{\bs p ^1}}\bigg)^{\frac{\alpha}{2\alpha+d}},
	n^{-\frac12}\bigg\} \]
	we have that
	\begin{align*}
				\liminf\limits_{n,\bs p \to \infty} &\inf\limits_{\hat\mu_{n,\bs p}} \sup\limits_{\mu \in\mc 	H(\alpha,L), \, Z \in \Pclass}\E_{\mu, Z} \Big[a_{n,\bs p}^{-1}\;\norm{\hat\mu_{n,\bs p}-\mu}_\infty\Big] \geq c >0,
	\end{align*}
	where the infimum is taken over all estimators $\hat\mu_{n,\bs p}$ of $\mu$. 
\end{theorem}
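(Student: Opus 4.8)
The plan is to exploit that $a_{n,\bs p}$ is the maximum of three terms, $r_1\defeq\bs p_{\min}^{-\alpha}$, $r_2\defeq\big(\log(n\bs p^{\bs 1})/(n\bs p^{\bs 1})\big)^{\alpha/(2\alpha+d)}$ and $r_3\defeq n^{-1/2}$, and to prove a separate minimax lower bound of order $r_i$ for each. Since $\E_{\mu,Z}[a_{n,\bs p}^{-1}\norm{\hat\mu_{n,\bs p}-\mu}_\infty]=a_{n,\bs p}^{-1}\E_{\mu,Z}[\norm{\hat\mu_{n,\bs p}-\mu}_\infty]$ and $\max_i c_i r_i\ge(\min_i c_i)\,a_{n,\bs p}$, it suffices to show, for all sufficiently large $n$ and $\bs p_{\min}$, that $\inf_{\hat\mu_{n,\bs p}}\sup_{\mu\in\mathcal H(\alpha,L),\,Z\in\Pclass}\E_{\mu,Z}[\norm{\hat\mu_{n,\bs p}-\mu}_\infty]\ge c_i\,r_i$ for $i=1,2,3$; dividing by $a_{n,\bs p}$ then gives a fixed positive lower bound for every such $(n,\bs p)$, hence for the $\liminf$. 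For $i=1,2$ I would keep the degenerate field $Z\equiv 0\in\Pclass$ (take $M\equiv 0$) and perturb only the mean; the term $r_3$ is the one that requires perturbing $Z$.

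For $r_1$ (the discretization term) I would use two hypotheses whose data are \emph{identically} distributed. Let $k^\star$ achieve $p_{k^\star}=\bs p_{\min}$; by Assumption \ref{ass:designdensity} the spacing of the $x_{k^\star,\cdot}$ is at least $1/(f_{\max}\bs p_{\min})$, so some open interval $(a,b)$ of length $\gtrsim\bs p_{\min}^{-1}$ contains no design value in coordinate $k^\star$. Fixing a $C^\infty$ bump $\psi$ supported in $(0,1)$, set $\mu_0\equiv 0$ and $\mu_1(\bs x)=L'\bs p_{\min}^{-\alpha}\psi\big(f_{\max}\bs p_{\min}(x_{k^\star}-a)\big)$. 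Then $\mu_1$ vanishes at every design point $\bs x_{\bs j}$, so the data have the same law under $(\mu_0,0)$ and $(\mu_1,0)$; meanwhile $\norm{\mu_0-\mu_1}_\infty\asymp\bs p_{\min}^{-\alpha}$ and $\norm{\mu_1}_{\mathcal H,\alpha}\le C(\psi,\alpha,d)\,L'\le L$ for a small absolute constant $L'$, independent of $n,\bs p$. Then $\max\big(\E_0\norm{\hat\mu-\mu_0}_\infty,\E_1\norm{\hat\mu-\mu_1}_\infty\big)\ge\tfrac12\norm{\mu_0-\mu_1}_\infty$ gives $c_1 r_1$. For $r_3$ I would instead hide the hard direction in $Z$: fix a smooth $g$ with $\norm{g}_\infty>0$ and $\mathrm{Lip}(g)^2+g(\bs 0)^2\le C_Z$, let $\eta\sim\mathcal N(0,1)$ and put $Z=\eta g\in\Pclass$ (exponent $\beta=1$, constant $M=|\eta|\mathrm{Lip}(g)$). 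Comparing $(\mu_0,Z)=(0,\eta g)$ with $(\mu_1,Z)=(c_0 n^{-1/2}g,\eta g)$ — both means are in $\mathcal H(\alpha,L)$ once $n$ is large — the observations are $Y_{i,\bs j}=(\theta+\eta_i)g(\bs x_{\bs j})+\e_{i,\bs j}$ with $\theta\in\{0,c_0 n^{-1/2}\}$, so $\mathbb P_0$ and $\mathbb P_1$ are images of a common map of ($n$ i.i.d.\ $\mathcal N(\theta,1)$ variables, independent error array), and by the data-processing inequality $\KL(\mathbb P_1\|\mathbb P_0)\le n\theta^2/2=c_0^2/2$. Since $\norm{\mu_1-\mu_0}_\infty=c_0 n^{-1/2}\norm{g}_\infty$, Le Cam's two-point method \citep[Theorem~2.2]{tsybakov2008introduction} yields $c_3 r_3$ for $c_0$ small.

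For $r_2$ (the nonparametric term) I would run the standard sup-norm Fano construction. Put $N\defeq n\bs p^{\bs 1}$ and $h\asymp(\log N/N)^{1/(2\alpha+d)}$, place $m=\lfloor c_\star/h\rfloor^d$ disjoint cubes $\bs x^{(k)}+[-h,h]^d$ inside $[0,1]^d$, fix a smooth bump $\phi$ supported in $[-1,1]^d$, and set $\mu_0\equiv 0$ and $\mu_k(\bs x)=L'h^\alpha\phi\big((\bs x-\bs x^{(k)})/h\big)$; each $\mu_k\in\mathcal H(\alpha,L)$ for $L'$ small, and all pairwise sup-distances equal $2s\defeq L'\norm{\phi}_\infty h^\alpha$ by disjointness of supports. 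Taking $Z\equiv 0$ and $\mathcal N(0,\sigma_0^2)$ errors and using that, by Assumption \ref{ass:design1} (which follows from Assumption \ref{ass:designdensity}), each cube holds at most $\Ccard h^d\bs p^{\bs 1}$ design points, one gets $\KL(\mathbb P_k\|\mathbb P_0)=\tfrac{n}{2\sigma_0^2}\sum_{\bs j}\mu_k(\bs x_{\bs j})^2\le\tfrac{\Ccard\norm{\phi}_\infty^2}{2\sigma_0^2}(L')^2 h^{2\alpha+d}N$. Because $h^{2\alpha+d}N\asymp\log N\asymp\log m$, a small $L'$ makes $\tfrac1m\sum_{k=1}^m\KL(\mathbb P_k\|\mathbb P_0)\le\tfrac1{16}\log m$, so Fano's inequality \citep[Theorem~2.7]{tsybakov2008introduction} gives $\inf_{\hat\mu}\max_{0\le k\le m}\mathbb P_k(\norm{\hat\mu-\mu_k}_\infty\ge s)\ge c>0$, and Markov's inequality turns this into $c_2 r_2$ with $r_2\asymp h^\alpha$.

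The step I expect to be the real obstacle is the term $r_3$: it is invisible to any design-based reduction and has to be encoded in the common realisation of the random field, the point being precisely that $n$ i.i.d.\ copies of $Z$ reveal only $\mathcal O(n)$ worth of information about it — this is the $n\theta^2/2$ in the KL bound. The remaining work is bookkeeping: checking that all three perturbed families lie in $\mathcal H(\alpha,L)$ and $\Pclass$ simultaneously for every large $(n,\bs p)$ (done by fixing the amplitude $L'$ and the scale of $g$ small once and for all), and observing that in the $r_2$ step $h$ may be smaller than $1/\bs p_{\min}$, in which case $r_2\lesssim r_1$ and the bound is anyway subsumed by the discretization term, so only the design count $\le\Ccard h^d\bs p^{\bs 1}$ — valid for every $h$ — is actually used.
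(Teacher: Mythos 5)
Your proposal is correct and follows essentially the same route as the paper: a bump vanishing at all design points for the $\bs p_{\min}^{-\alpha}$ term, a multiple-hypothesis (Fano/Tsybakov) construction with disjoint bumps at scale $h\asymp(\log N/N)^{1/(2\alpha+d)}$ and the design-count bound for the intermediate term, and a two-point argument hiding a location parameter inside the random field (the paper uses a random constant $Z_i$ and convolution monotonicity of KL where you use $Z=\eta g$ and data processing, but these are the same idea) for the $n^{-1/2}$ term. Your closing remark correctly handles the only delicate point, namely that the design-count bound is only needed in the regime $h\gtrsim 1/\bs p_{\min}$, outside of which the intermediate rate is subsumed by the discretization term — exactly the reduction the paper makes via its display \eqref{eq:regimelowerbound}.
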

The proof is given in Section \ref{sec:proof:theorem:optimality}. 
\begin{remark}
We point out that the lower bound does not require the design Assumption \ref{ass:design1}. 
The argument and hence the result can be adapted to more general distributions of the errors. What is required in our argument is that the Kullback-Leibler divergence in the location model associated with the distribution of the errors is quadratic in the location parameter, at least locally around $0$. See for example \citet{tsybakov2008introduction}, Assumption B in Section 2.3 and Section 2.5. Moreover, intuitively speaking Gaussian errors are quite regular, and we show that these already suffice to  achieve a lower bound slow enough to match the upper bound.  The upper bound holds more generally, so slower minimax rates cannot occur, and when moving away from Gaussian errors, one does not expect faster rates	except for degenerate errors. Thus, it appears reasonable to consider Gaussian errors in the lower bounds. 
    
\end{remark}

\subsubsection*{Discussion}

\begin{remark}[Regimes in the rate] \label{rem:regimes_in_the_rate}
 
	Let us discuss the minimax rates given in Theorems \ref{theorem:estimation:rates} and \ref{theorem:optimality} in terms of $\bs p$ and $n$ and relate them to results previously obtained in the literature. 
 For simplicity, we concentrate on the setting  where all coordinates of $\bs p$ are of the same order $p$, which is assumed to be polynomial in $n$.
 
 If $p \lesssim (n/\log n)^{1/(2\, \alpha)}$, the resulting rate in \eqref{eq:optimal:rate} is $p^{-\alpha}$. The discretization term dominates as observed for the error analysis in $L_2$ in \citet{cai2011optimal}. Note that for the asynchronous, random design considered in \citet{li2010uniform, wang2016functional} this term does not arise. 

 If in contrast $p \gtrsim (\log n)^{1/d}\, n^{1/(2\, \alpha)}$  \eqref{eq:optimal:rate} reduces to  $n^{-1/2}$, without the additional logarithmic factors that arise e.g.~in \citet{li2010uniform, wang2016functional, xiao2020asymptotic}.  
 
 In between the rate is $\big(\log(n)/(n\,p^d)\big)^{\alpha/(2\alpha+d)}$, a regime which does not occur in the error analysis in $L_2$ in \citet{cai2011optimal} but has been obtained for the asynchronous, random design in \citet[Corollary 3.2, (a)]{li2010uniform}, see also \citet[Corollary 4.2, (1)]{wang2016functional}.

\end{remark}

\begin{remark}[Choice of the bandwidth]
    Consider the setup of Theorem \ref{theorem:estimation:rates} in the case that $\max\big((\log(p)/\bs{p^1})^{1/d}, \bs{p}_{\min}^{-1} \big) \lessneqsim n^{-\frac1{2\alpha}}$ and define the interval
    \begin{equation*}
        \bar H_n \defeq \big[ c_1\,\max\big(\big(\log(p)/\bs{p^1}\big)^{1/d}, \bs{p}_{\min}\big), \, c_2\,
        n^{-\frac{1}{2\alpha}} \big],
    \end{equation*}
    with constants $c_1, c_2 >0$. Then the result from Theorem \ref{theorem:estimation:rates} specializes to
    \begin{equation}
        \sup_{h \in \bar H_n}\, \sup_{Z \in \Pclass, \mu \in \Hclass}\, \sqrt n\; \expec_{\mu,Z} \big[\norm{ \widehat \mu_{n,p,h} - \mu}_\infty \big]= \cO(1)\,.
    \end{equation}
    Therefore any choice of $h_n\in \bar H_n$ 
    is sufficient in order to achieve the $\sqrt n$-rate. Consider for simplicity the setting where all coordinates in $\bs p$ are of the same order $p$ what reduces the assumption on $p$ to $p \gtrsim (\log n)^{(\zeta + \delta)/d}\, n^{1/(2\, \alpha)}$ for some $\delta \geq 1$ and $\zeta \geq 0$. In this case taking $h \simeq (\log n)^{\delta/d} / p$ already yields the $n^{-1/2}$-rate. If $\zeta$ has to be $0$ then the bias-term is also of order $n^{-1/2}$, and if $\delta = 1$ then the error term is of order $n^{-1/2}$. Therefore asymptotic inference, see Theorem \ref{cor:asymptotic:normality}, is only possible for $\delta>1$ and $\zeta> 0$. Further data-driven bandwidths 
    which are contained in such intervals with probability converging to $1$ are valid in order to obtain the $\sqrt n$-rate.
    
%
\end{remark}

\begin{remark}[Interpolation estimators]\label{rem:errorinterpol}
	Note that the interpolation estimator as suggested in \citet{cai2011optimal} for the dense regime has an error contribution of the form 
	$$\max_{\bs j} |\oeps{\bs j}|$$
	which for independent and identically distributed Gaussian random errors is of order $\sqrt{\log \bs{p^1}}/\sqrt n$. If the coordinates of $\bs p$ are polynomial in $n$ this is of order $\sqrt{\log n}/\sqrt n$, thus slower than $1/\sqrt n$.  
\end{remark}

\section{Asymptotic normality and local polynomial estimators}\label{sec:local:pol:estimator} 

\subsubsection*{Asymptotic normality}

In case the term $I_3^{n,\bs p, h}$ in the decomposition \eqref{eq:decomposition} dominates, we can apply the functional central limit theorem to obtain asymptotic normality of the linear estimator in \eqref{eq:linest:mu:bar}, thereby generalizing \citet[Theorem 1]{degras2011simultaneous} to multiple dimensions and general linear estimators. 
We require the following additional assumption on the process $Z$.


\begin{assumption} \label{ass:model:2} 
		The covariance kernel $\Gamma\colon [0,1]^d\times [0,1]^d \to \R$ of the process $Z\colon \Omega \times [0,1]^d \to \R$ belongs to the \name{Hölder}-class with smoothness $\gamma \in (0,1]$, that is $\Gamma \in \mc H_{[0,1]^{2d}}(\gamma, \tilde L)$.
\end{assumption}

\begin{theorem} \label{cor:asymptotic:normality}
	In model \eqref{eq:model} under Assumptions \ref{ass:model} and \ref{ass:model:2}, consider the linear estimator $\mule$ in \eqref{eq:linest:mu:bar} with weights satisfying Assumption \ref{ass:weights} with $\zeta=\floor \alpha$.  If  for some $\delta >1$ we have that $\log(\bs p_{\min})^{\delta/d}/\big(\min((\bs{p^1})^{1/d}, \bs{p}_{\min} )\big) \lessneqsim n^{-\frac1{2\alpha}}\log(\bs{p}_{\min})^{-\delta} $ ,
    then for all sequences of smoothing parameters $h_n = h$ in 
 $      H_n \defeq \big[c_1\,\log(\bs p_{\min})^{\delta/d}/\big(\min((\bs{p^1})^{1/d}, \bs{p}_{\min} )\big), \, c_2\,
        n^{-\frac{1}{2\alpha}}\log(\bs{p}_{\min})^{-\delta} \big]$ with constants $c_1, c_2>0$,
	we obtain the convergence in distribution
	\begin{align*}
		\sqrt n \big( \mule -\mu\big) \ \stackrel{D}{\longrightarrow} \ \GG (0,\Gamma)\,,
	\end{align*}
	where $\GG$ is a real-valued Gaussian process on $[0,1]^d$ with covariance kernel $\Gamma$ of $Z$. 
\end{theorem}

\begin{remark}
        The assumption $\log(\bs p_{\min})^{\delta}/\big(\min((\bs{p^1})^{1/d}, \bs{p}_{\min} )\big) \lessneqsim n^{-\frac1{2\alpha}}\log(\bs{p}_{\min})^{-\delta} $ for a $\delta >1$ guarantees that the intervals $H_n$ are not empty for $n$ being large enough. The rate of the upper bounds yields
    \begin{align*}
        h^\alpha & \lesssim n^{-1/2}\,\log(\bs{p}_{\min})^{- \alpha\,\delta} \lessneqsim n^{-1/2}\,.
    \end{align*}
    Therefore the discretization error is negligible. Note that instead of $\log(\bs{p}_{\min})^{-\delta}$ any factor that would make up for a strictly faster rate of the discretization error would be permissible. The rate of the lower bound guarantees
    \begin{align*}
        \bigg( \frac{\log(1/h)}{n\,\bs{p^1}h^d}\bigg)^{1/2} & \lesssim \bigg( \frac{\log(\bs{p}_{\min})^{1-\delta}}{n\,\bs{p^1}\min(\bs{p^1}, \bs{p}_{\min}^d)^{-1}}\bigg)^{1/2} \lesssim \bigg( \frac{\log(\bs{p}_{\min})^{1-\delta}}{n}\bigg)^{1/2}\lessneqsim \frac{1}{\sqrt n}\,.
    \end{align*}
    Here $\delta>1$ is required.
\end{remark}

\subsubsection*{Local polynomial estimators}



We discuss local polynomial estimators and show that in case of design densities and mild assumptions on the kernel, the weights satisfy the properties in Assumption \ref{ass:weights}. 

\begin{assumption} \label{ass:designdensity}
	Assume that there exist Lipschitz continuous densities $f_k\colon[0,1]\to \R$ bounded by $0<\fkmin \leq f_k(t) \leq \fkmax < \infty$, $t \in [0,1]$ and $1 \leq k \leq d$, such that the design points $x_{k,l}$,  $1 \leq l \leq p_k $
%
		\begin{equation*}
			\int_0^{x_{k,l}} f_k(t) \, \dx t=\frac{l-0.5}{p_k} \,, \qquad l=1,\ldots,p_k,\quad k=1,\dotsc,d \,.	
		\end{equation*}
Lemma \ref{lemma:design:points} in the Appendix \ref{sec:proof:lemma:locpol:weights} shows that Assumption \ref{ass:designdensity} implies Assumption \ref{ass:design1}.
\end{assumption}

Let $N_{l,d}\defeq \binom{d+l}{d}$ let \(
	\bs \psi_l \colon \{1, \ldots, N_{l,d-1}\} \to \{\bs r \in \{0,1,\ldots, l\}^d \mid |\bs r| = l\} \)
be an enumeration of the set $\{\bs r \in \{0,1,\ldots,l\}^d \mid |\bs r | = l\}$ and let $U_m\colon \R^d \to \R^{N_{m,d}}$ be defined as 
$ U_m(\bs u)\defeq \big(1, P_{\bs \psi_1(1)}(\bs u), \ldots,$ 
$P_{\bs \psi_{1}(d)}(\bs u), \ldots, P_{\bs \psi_m(1)}(\bs u), \ldots, P_{\bs \psi_m(N_{m,d-1})}(\bs u)\big)$
for the monomial $P_{\bs \psi_l(j)}(\bs u)\defeq \frac{\bs u^{\bs \psi_l(j)}}{\bs \psi_l(j)!}$, for all $ j=1,\ldots, N_{l,d-1}$ and $ l= 1, \ldots, m$. As a result of the properties of the binomial coefficient we note that $U_m(\bs u) \in \R^{N_{m,d}}$.
Choose a non-negative kernel function $K \colon \R^d \to [0, \infty)$ and a bandwidth $h >0$, set $K_{h}(\bs x) \defeq K(\bs x/h)$ and $U_h(\bs x) = U_m(\bs x/h)$, and given an order $m \in \N$ define
\begin{align} \label{eq:locpol} 
	\lokpol(x) \defeq {\displaystyle\argmin_{\bs \vartheta \in \R^{N_{m,d}}}} \sum\limits_{\bs j=\bs 1}^{\bs p} \big(\oY{ \bs j} - \bs \vartheta^\top\, U_h(\bs x_{\bs j}-\bs x) \big)^2 K_{h}(\bs x_{\bs j}-\bs x) \,, \quad \bs x \in T \,.
\end{align}
The local polynomial estimator of $\mu$ of order $m$ at $\bs x$ is given as 
\begin{align}\label{eq:locpolest}
	\mulp(\bs x) \defeq \big(\lokpol(\bs x)\big)_1 \,, \quad \bs x \in T \,.
\end{align}
\begin{lemma}\label{lemma:locpol:weights}

Let the kernel function $K$ have compact support in $[-1,1]^d$, be Lipschitz continuous and satisfy 
\begin{align}
	K_{\min}\one_{[-\bs \Delta, \bs \Delta]}(\bs u) \leq K(\bs u) \leq K_{\max} \qquad \bs u \in \R^d \,,\label{eq:kernel}
\end{align}
for positive constants $\Delta, K_{\min}, K_{\max}>0$, where  $\bs \Delta = (\Delta,\ldots, \Delta)^\top \in \R^d$.
	Then under the design Assumption \ref{ass:designdensity},  
there exist $p_0 \in \N$ and $h_0, c>0$ such that for $\bs p = (p_1,\ldots,p_d)^\top$ with $\bs p_{\min} \geq p_0$ and $h\in(c/\bs p_{\min},h_0]$, 
the local polynomial estimator is uniquely defined by \eqref{eq:locpolest}, and is a linear estimator with weights satisfying 
Assumption \ref{ass:weights} with $\gamma = m$.
\end{lemma}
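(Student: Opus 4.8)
The plan is to pass to the weight representation of the local polynomial estimator and to reduce every claimed property to a single two–sided bound on a normalized design matrix. Writing $\bs e_1$ for the first canonical basis vector of $\R^{N_{m,d}}$, noting that $U_m(\bs 0)=\bs e_1$ and that $\{\bs\vartheta^\top U_h(\bs y-\bs x):\bs\vartheta\in\R^{N_{m,d}}\}$ is exactly the space of polynomials in $\bs y$ of total degree $\le m$, the normal equations for \eqref{eq:locpol} show that whenever the matrix
\[
B_n(\bs x;h)\defeq\sum_{\bs j=\bs 1}^{\bs p}U_h(\bs x_{\bs j}-\bs x)\,U_h(\bs x_{\bs j}-\bs x)^\top K_h(\bs x_{\bs j}-\bs x)
\]
is invertible, the estimator $\mulp$ in \eqref{eq:locpolest} is linear as in \eqref{eq:linest:mu:bar} with weights
\[
\wjb xh=\frac{1}{\bs p^{\bs 1}h^d}\,\bs e_1^\top M_n(\bs x;h)^{-1}\,U_h(\bs x_{\bs j}-\bs x)\,K_h(\bs x_{\bs j}-\bs x),\qquad M_n(\bs x;h)\defeq\frac{1}{\bs p^{\bs 1}h^d}B_n(\bs x;h).
\]
Using that the lemma grants us the freedom to choose $c$ large, $p_0$ large and $h_0$ small, the crux of the proof is to establish constants $0<\lambda_-\le\lambda_+<\infty$, depending only on $m,d,K_{\min},K_{\max},\Delta,f_{\min},f_{\max}$ and the Lipschitz constants of the $f_k$, such that
\[
\lambda_-\ \le\ \lambda_{\min}\!\big(M_n(\bs x;h)\big)\ \le\ \lambda_{\max}\!\big(M_n(\bs x;h)\big)\ \le\ \lambda_+
\]
for all $\bs p$ with $\bs p_{\min}\ge p_0$, all $h\in(c/\bs p_{\min},h_0]$ and all $\bs x\in T$; in particular $\mulp$ is then well defined.

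The upper bound $\lambda_{\max}(M_n)\le\lambda_+$ is routine: by the compact support of $K$ only indices with $\norm{\bs x_{\bs j}-\bs x}_\infty\le h$ contribute, for those the entries of $U_h(\bs x_{\bs j}-\bs x)U_h(\bs x_{\bs j}-\bs x)^\top$ are bounded by a constant depending on $m,d$, we have $K_h\le K_{\max}$, and by Lemma \ref{lemma:design:points} (which also yields Assumption \ref{ass:design1}) there are at most $\Ccard h^d\bs p^{\bs 1}$ such indices. The lower bound is the main obstacle. Here we exploit the quantile structure of Assumption \ref{ass:designdensity}: consecutive design points in coordinate $k$ are a distance of order $1/p_k$ apart, with constants governed by $f_{k,\min},f_{k,\max}$ and $\mathrm{Lip}(f_k)$. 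We bound $M_n(\bs x;h)$ from below by restricting the sum to the sub-window $W=\big(\bs x+[-\Delta h,\Delta h]^d\big)\cap T$ and using $K\ge K_{\min}$ there, which leaves the normalized empirical second-moment matrix of the rescaled points $\{(\bs x_{\bs j}-\bs x)/h:\bs x_{\bs j}\in W\}\subset[-\Delta,\Delta]^d$. Choosing $c$ large enough (depending on $\Delta,m,d,f_{\min}$) forces this point set to contain, in every coordinate direction, at least $m+1$ values that remain separated by a fixed $\eta>0$ after rescaling — uniformly in $\bs x$, including when $\bs x\in\partial T$, in which case $W$ is only a sub-orthant of the full cube but still carries a fixed positive fraction of the points. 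A compactness argument over all such point configurations (the normalized counting measures on $[-\Delta,\Delta]^d$ subject to the separation constraint form a tight family, and every weak limit is supported on a set that is unisolvent for polynomials of degree $\le m$, so the smallest eigenvalue of the second-moment matrix is bounded away from zero), or equivalently an explicit tensor-grid unisolvence estimate, yields $\lambda_{\min}(M_n)\ge\lambda_->0$. The point that the regime where $h$ is of order $1/\bs p_{\min}$ is admitted — where $M_n$ is \emph{not} close to the continuum matrix $\int U_m(\bs u)U_m(\bs u)^\top K(\bs u)\prod_k f_k(\cdot)\,\dd\bs u$ — is precisely what makes this step more delicate than in ordinary nonparametric regression and forces the discrete separation argument in place of a Riemann-sum approximation.

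Granted the two-sided bound on $M_n$, the remaining properties follow. Property \ref{ass:weights:vanish} is immediate from $\supp K\subseteq[-1,1]^d$, since $K_h(\bs x_{\bs j}-\bs x)=0$ once $\norm{\bs x_{\bs j}-\bs x}_\infty>h$. Property \ref{ass:weights:polynom} with $\gamma=m$ follows from invertibility of $B_n$ and the exact reproduction property of least-squares polynomial fitting: for any polynomial $q$ of degree $\le m$ the fit is perfect, so $\sum_{\bs j}\wjb xh\,q(\bs x_{\bs j})=q(\bs x)$; specializing to $q(\bs y)=(\bs y-\bs x)^{\bs r}/\bs r!$ gives $\sum_{\bs j}\wjb xh=1$ and $\sum_{\bs j}(\bs x_{\bs j}-\bs x)^{\bs r}\wjb xh=0$ for $1\le|\bs r|\le m$. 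Property \ref{ass:weights:sup} follows from $|\wjb xh|\le(\bs p^{\bs 1}h^d)^{-1}\lambda_-^{-1}\,\norm{U_h(\bs x_{\bs j}-\bs x)}\,K_{\max}$ together with $\norm{U_h(\bs x_{\bs j}-\bs x)}\le C(m,d)$ on the support of $K$, giving $\Cmax=C(m,d)K_{\max}/\lambda_-$.

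Finally, property \ref{ass:weights:lipschitz}. If $\norm{\bs x-\bs y}_\infty\ge h$, apply \ref{ass:weights:sup} to both terms to get $|\wjb xh-\wjb yh|\le 2\Cmax/(\bs p^{\bs 1}h^d)$. If $\norm{\bs x-\bs y}_\infty<h$, set $V_{\bs j}(\bs x)=U_h(\bs x_{\bs j}-\bs x)K_h(\bs x_{\bs j}-\bs x)$ and split into three cases according to which of $V_{\bs j}(\bs x),V_{\bs j}(\bs y)$ vanish. When exactly one vanishes, say $V_{\bs j}(\bs x)=\bs 0$, the point $\bs x_{\bs j}$ lies within $\norm{\bs x-\bs y}_\infty$ of the boundary of the $h$-cube around $\bs y$, so Lipschitz continuity of $K$ (which forces $K$ to vanish continuously on $\partial[-1,1]^d$) gives $K_h(\bs x_{\bs j}-\bs y)\lesssim\norm{\bs x-\bs y}_\infty/h$ and hence $|\wjb yh|\lesssim(\bs p^{\bs 1}h^d)^{-1}\norm{\bs x-\bs y}_\infty/h$. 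When both are nonzero, write
\[
\wjb xh-\wjb yh=\frac{1}{\bs p^{\bs 1}h^d}\Big(\bs e_1^\top\big(M_n(\bs x;h)^{-1}-M_n(\bs y;h)^{-1}\big)V_{\bs j}(\bs x)+\bs e_1^\top M_n(\bs y;h)^{-1}\big(V_{\bs j}(\bs x)-V_{\bs j}(\bs y)\big)\Big),
\]
and bound the second summand via Lipschitz continuity of $\bs u\mapsto U_m(\bs u)K(\bs u)$ (so $\norm{V_{\bs j}(\bs x)-V_{\bs j}(\bs y)}\lesssim\norm{\bs x-\bs y}_\infty/h$) and $\norm{M_n(\bs y;h)^{-1}}\le\lambda_-^{-1}$, and the first via $\norm{M_n(\bs x;h)^{-1}-M_n(\bs y;h)^{-1}}\le\lambda_-^{-2}\norm{M_n(\bs x;h)-M_n(\bs y;h)}$ together with the Lipschitz estimate $\norm{M_n(\bs x;h)-M_n(\bs y;h)}\lesssim\norm{\bs x-\bs y}_\infty/h$, which itself follows from the per-summand Lipschitz continuity of $\bs u\mapsto U_m(\bs u)U_m(\bs u)^\top K(\bs u)$ and the bound $\lesssim h^d\bs p^{\bs 1}$ on the number of summands affected (again from Lemma \ref{lemma:design:points}). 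Collecting the three cases gives \ref{ass:weights:lipschitz} with a constant $\Clip$ of the required form, depending only on the stated quantities, which completes the verification of Assumption \ref{ass:weights} with $\gamma=m$.
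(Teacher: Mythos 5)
Your proposal has the same overall architecture as the paper's proof: derive the explicit weight formula from the normal equations, reduce everything to a uniform lower bound on the smallest eigenvalue of the normalized Gram matrix $M_n(\bs x;h)=B_{\bs p,h}(\bs x)$, and then read off \ref{ass:weights:polynom}--\ref{ass:weights:lipschitz} (your three-case treatment of the Lipschitz property, including the perturbation bound $\norm{M_n(\bs x;h)^{-1}-M_n(\bs y;h)^{-1}}\le\lambda_-^{-2}\norm{M_n(\bs x;h)-M_n(\bs y;h)}$, is essentially identical to the paper's). Where you genuinely diverge is the eigenvalue lower bound itself. The paper (Lemma \ref{lemma:proof:(LP1)}) partitions $[0,1]^d$ into $2^d$ boundary regions and compares the discrete quadratic form with the continuum integral $\int_{E^i}(\bs v^\top U_m(\bs z))^2\,\dx\bs z$ via an explicit Riemann-sum error bound of order $1/(\bs p_{\min}h)\le c_1/c$, which is made small by taking $c$ large; you instead propose a compactness/unisolvence argument over rescaled design configurations. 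Your framing that the Riemann-sum route is "not available" in the regime $h\asymp 1/\bs p_{\min}$ is not accurate: that is precisely the route the paper takes, and the constraint $\bs p_{\min}h>c$ is exactly what keeps the Riemann-sum error at most $d\,c_1/c$.

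The one step you should tighten is the unisolvence claim for weak limits. Having $m+1$ values per coordinate separated by $\eta$ in the \emph{point set} does not by itself put those points in the support of the weak limit of the weighted counting measures: each design point carries mass $1/(\bs p^{\bs 1}h^d)$, which tends to zero whenever $\bs p^{\bs 1}h^d\to\infty$, so the designated grid points may individually carry vanishing mass. The argument does go through, but for a different reason in that regime: any fixed ball of radius $\eta$ inside the rescaled window contains $\gtrsim \eta^d\prod_k(p_kh)$ design points and hence carries total mass $\gtrsim\eta^d\fmin^{\bs 1}$, so the limit measure dominates (a multiple of) Lebesgue measure on a product of intervals of length $\ge\Delta$, which is unisolvent; in the complementary regime $\bs p^{\bs 1}h^d\asymp c^d$ the limit is atomic and your tensor-grid argument applies. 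You need to run the compactness argument uniformly over both regimes (and over the boundary truncations of the window) to extract a single $\lambda_->0$. Once that is spelled out, your proof is complete; the paper's explicit error estimate simply packages the same dichotomy into one quantitative bound.
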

%
 %
%
The proof is given in Section \ref{sec:proof:lemma:locpol:weights} in the appendix.




\section{Simulations} \label{sec:sim}

We illustrate the rates in Theorem \ref{theorem:estimation:rates} in a simulation. We consider dimension $d=1$, and for the mean function choose
\begin{equation}
	\mu_0(x)= \sin(3\pi(2x-1))\exp(-2\abs{2x-1}), \quad x\in [0,1]. \label{eq:sim:mu}
\end{equation}

\begin{figure}[t]
    \centering
		\includegraphics[width=.6\linewidth]{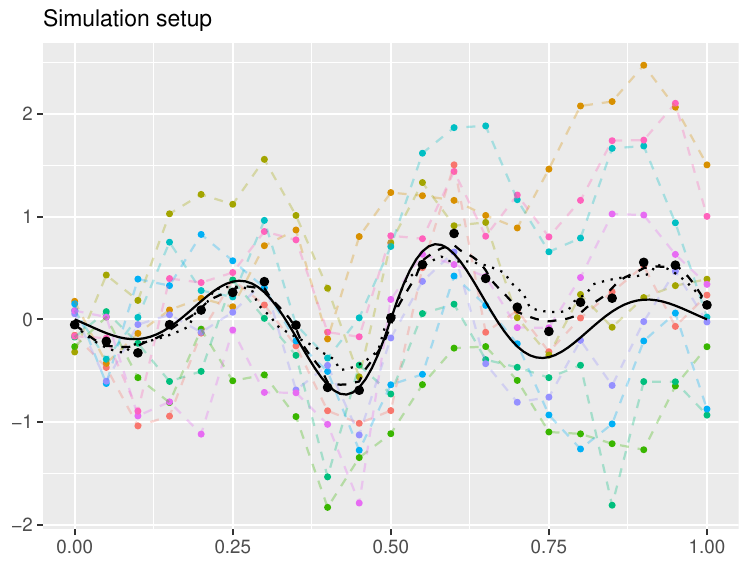}
		\caption{\small Mean function $\mu_0$ (solid line), $n = 10$ observational rows with $p = 21$ design points. Estimator based on all (dashed line) as well as on $p=11$ observations (dotted line).\\\phantom{h}}
		\label{fig:example}
\end{figure}

\begin{figure}[t]
	\begin{minipage}[b]{.45\linewidth}
		\centering
		\includegraphics[width=\linewidth]{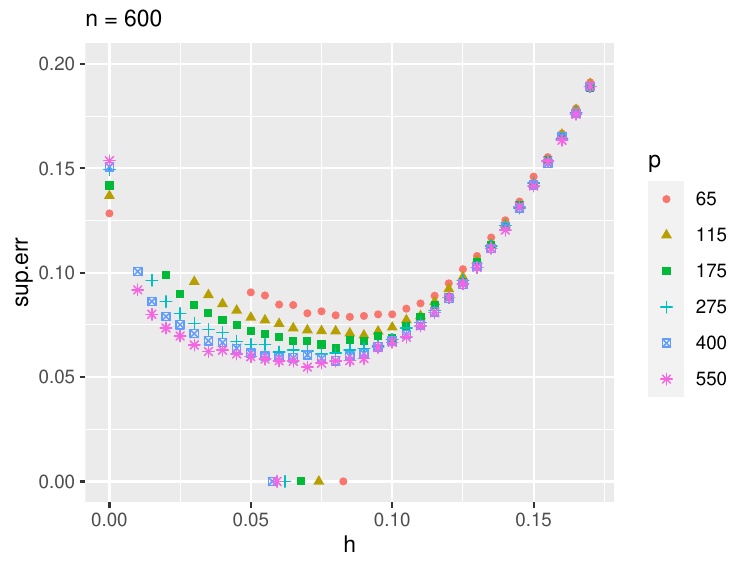}
		\caption{\small Sup-norm error for $n=600$ and various sizes of $p$ for bandwidths on a $0.005$ grid starting at $3/p$, as well as error of interpolation estimator. The values with \texttt{sup.err = 0} indicate the bandwidth chosen by leave-one-curve-out cross validation.}
		\label{fig:comp:h:sigma1}
	\end{minipage}
\hspace{0.5cm}
	\begin{minipage}[b]{.45\linewidth}
		\centering
		\includegraphics[width=\linewidth]{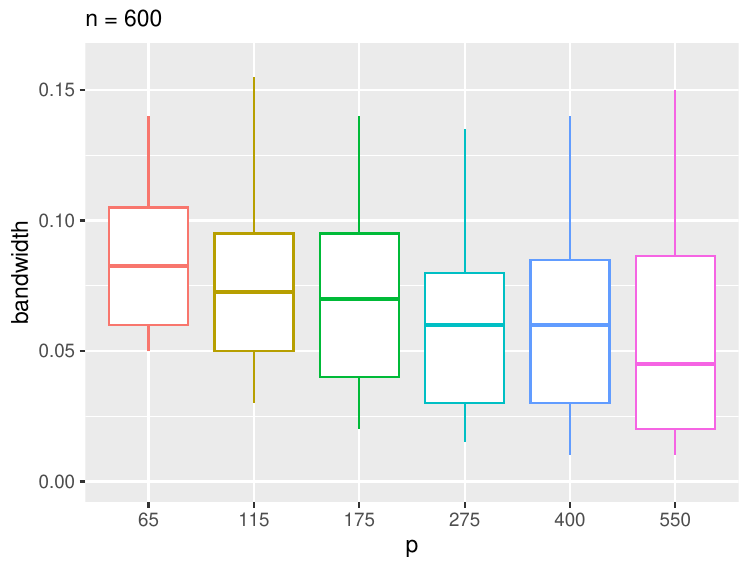}
		\caption{\small Boxplots of $N = 200$ iterations of chosen bandwidths chosen by \textit{leave-one-curve-out cross validation} in the setup of Figure \ref{fig:comp:h:sigma1}.\\[9.5mm]}
		\label{fig:h_boxplot}
	\end{minipage}
\end{figure}

We take independent standard Gaussian errors $\e_{i, j} $ and for the individual deviations we use independent Brownian motions. Simulations are performed in \texttt{R}, and the local polynomial estimator is computed using the \texttt{R}-package \texttt{locpol} with the function \texttt{locPolWeights}. 
Figure \ref{fig:example} contains plots of $\mu_0$, $n=10$ observations at $p=21$ design points as well as the local polynomial estimator of degree 2 based on all as well as only on $p=11$ observations.

In the following, since we have synchronous design points we directly sample the averaged errors and processes, resulting in  $\bar \e_{j,n} \sim \mc N(0, 1/n)$ and for the individual deviation $\bar Z_{j,n} \sim n^{-1/2} B(t),\, t\in [0,1]$ for a Brownian motion $(B(t))_{t\in[0,1]}$.

To determine the sensitivity on the bandwidth, in Figure \ref{fig:comp:h:sigma1}, for sample size $n=600$ and number of design points $p  \in \{65,115, 175, 275,  400,  550)$, based on $N=1000$ iterations we simulate the supremum norm errors on a grid of bandwidths of step size $0.005$, starting at $3/p$. Again we use the local polynomial estimator of degree 2 and complement the results with an interpolation estimator. For large but also very small bandwidths - depending on $p$ - the estimation error in the supremum norm starts again to increase. 


Figure \ref{fig:h_boxplot} contains  boxplots of bandwidths selected by \textit{leave-one-curve-out cross validation}. Comparing the values to Figure \ref{fig:comp:h:sigma1} 
we see that overall reasonable values ares selected, and in the majority of cases the resulting bandwidth seems to result in minimal overall supremum norm error.

\vspace{1mm}

%

\begin{figure}[b!]
	\begin{minipage}[b]{.45\linewidth}
			\centering
			\includegraphics[width=\linewidth]{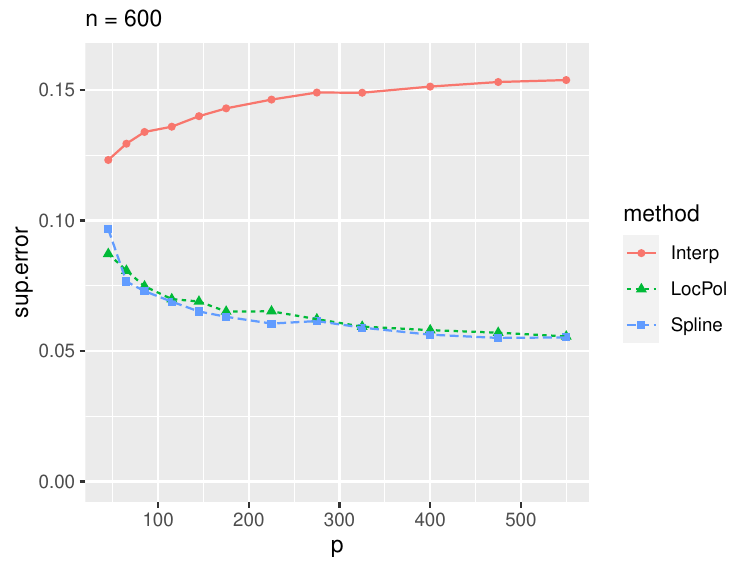}
			\caption{\small Effect of increasing $p$ on overall sup-norm error for local polynomial estimator (green dotted line), smoothing spline (blue dashed line) and interpolation estimator (solid red line).}
			\label{fig:comparison_sup}
	\end{minipage}
	\hspace{0.5cm}
	\begin{minipage}[b]{.45\linewidth}
		\centering
		\includegraphics[width=\linewidth]{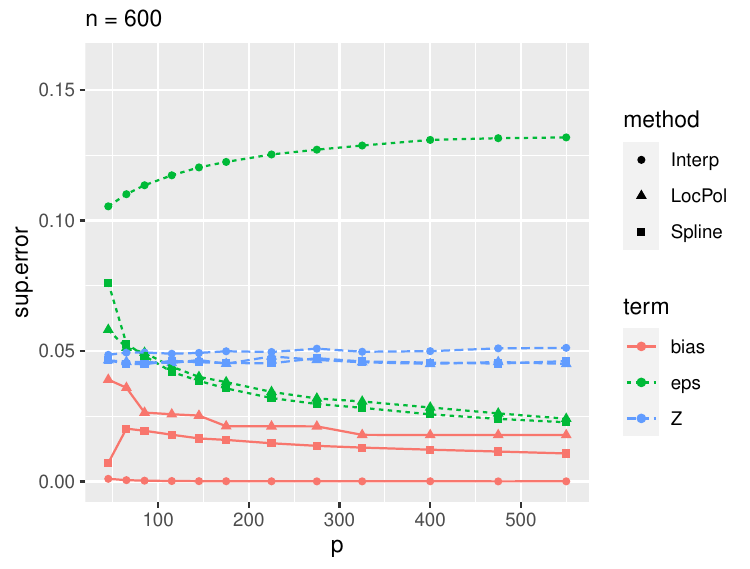}
		\caption{\small Effect of increasing $p$ on contribution of bias (red curves), process terms $Z$ (blue curves) as well as errors (green curves) on the the sup-norm error of the three methods.\\\phantom{h}}
		\label{fig:comparison_decomp}
	\end{minipage}
\end{figure}

Figure \ref{fig:comparison_sup} contains the sup-norm error of the local polynomial estimator with optimally tuned bandwidth, of a 
penalized splines estimator computed using the function \texttt{smooth\_spline} from package \texttt{stats} and optimally tuned smoothing parameter, as well as an interpolation estimator for $n=600$ and a range of values of $p$. We again use $N=1000$ repetitions to determine the expected sup-norm error. While for local polynomial and smoothing splines the sup-norm error first decreases and then stabilizes for increasing $p$, for the interpolation estimator it actually increases with increasing number of design points $p$. Taking a closer look at the contributions of the terms in the error decomposition in \eqref{eq:decomposition} in  
Figure \ref{fig:comparison_decomp}, we observe that while the contributions from the processes $B_i$ are comparable for all three methods, and the bias of the interpolation estimators is quite small, the contribution from the observation errors increases for interpolation as also observed in Remark \ref{rem:errorinterpol}.



\section{Real data illustrations} \label{sec:applications}

In this section we investigate the effect of a coarser discretization in two data examples. Smaller values of $p$ result in fewer data points, which may be advantageous for storage and also for energy efficiency in the measurement process, as long as estimation quality is not too adversely affected. 

To quantify the effect on the sup-norm error, we proceed as follows. We use the full data set to compute the estimator as well as the fast and fair confidence bands proposed in \cite{liebl2019fast}. Here we use the package \texttt{ffscb} from \cite{liebl2019fast} with two intervals in the time spans in our examples. 
Then we compute the estimator for coarser discretizations, that is smaller values of $p$ using only subsets of the full data, and investigate whether the resulting estimator is still fully contained in the confidence bands.

\subsection{Temperature series}

First we consider daily temperatures (per month) for the years 2000 until 2022 in Nuremberg, Germany. The data were  obtained from the \textit{Deutschen Wetter Dienst (DWD)} at \href{https://opendata.dwd.de/climate_environment/CDC/observations_germany/climate/10_minutes/air_temperature/historical/}{[Link]}.
 To remove serial dependence, only the $1, 4, 8, 12, 15, 18, 22, 25$ and  $29^{\text{th}}$ days of each month were used. Data are recorded every 10 minutes, which results in $p = 144$. Some NA's had to be removed which results in varying $n$ for the different months, see Table \ref{tab:monthnh}.  

Figure \ref{fig:weather_curves} contains plots of the observed daily temperature curves of the 
$1, 8, 15, 22$ and $29^{\text{th}}$ day of the month, together with a local polynomial fit. The bandwidth is chosen by leave-one-curve-out cross validation, the results are displayed in Table \ref{tab:monthnh}. 

\begin{table}[h!]
	\begin{tabular}{c|c|c|c|c|c|c|c|c|c|c|c|c}
Month & Jan & Feb & Mar & Apr & May & Jun & Jul & Aug & Sep & Oct & Nov & Dec\\ \hline
$n$ & 178 & 165 & 178 & 178 & 175 & 187 & 184 & 186 & 184 & 184 & 183 & 188 \\ \hline
$h$ & 72.0 & 72.0 & 58.4 & 64.8 & 43.2 & 64.8 & 57.6 & 57.6 & 57.6 & 57.6 & 136.8 & 43.2
\end{tabular}
\caption{Number of observations $n$ and bandwidth $h$ for each month. }\label{tab:monthnh}
\end{table}

\begin{figure}
	\centering
	\includegraphics[width=\linewidth]{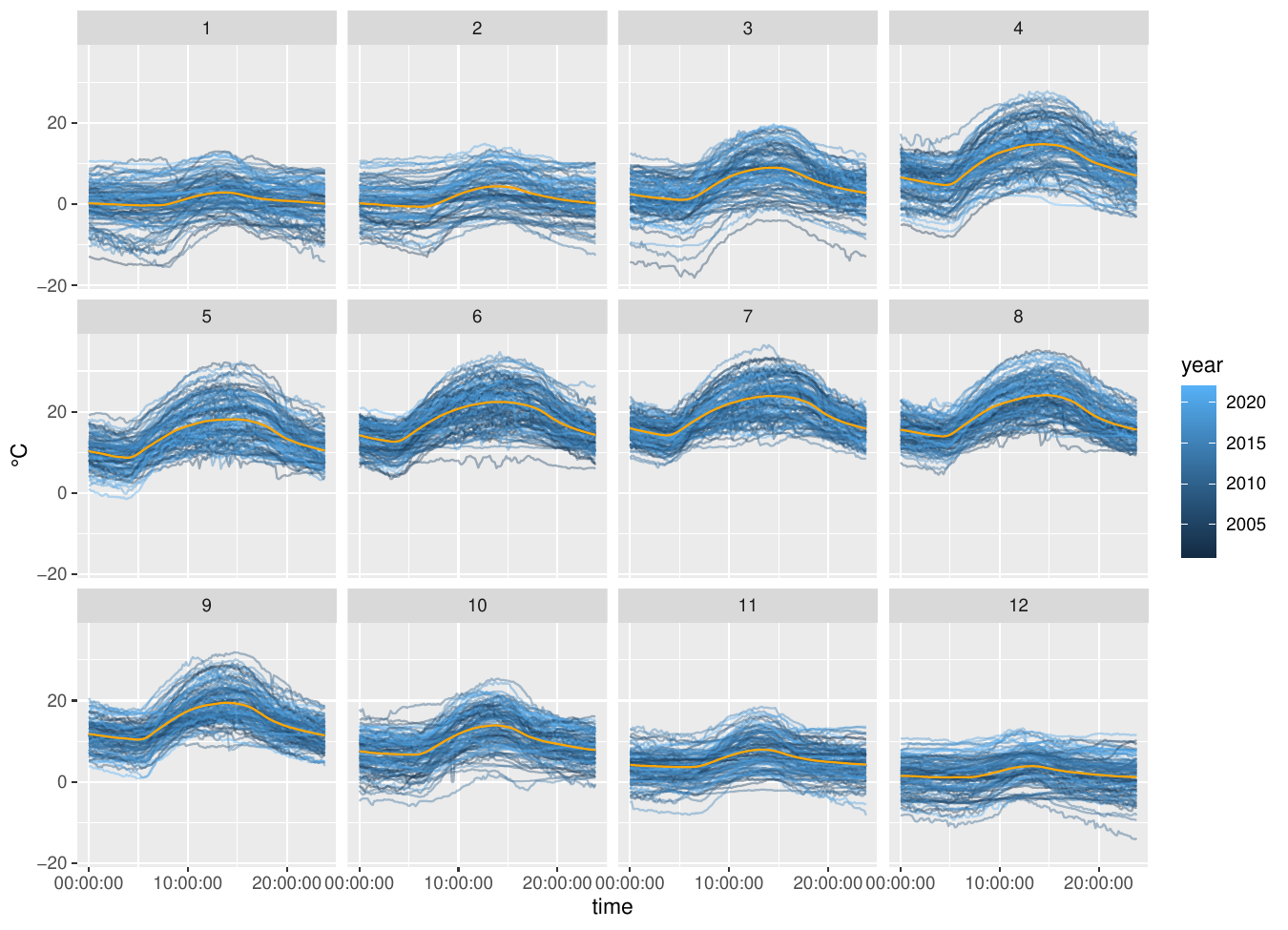}
	\caption{\small Daily temperature curves in Nuremberg, Germany, per month from 2000 until 2022, together with a local polynomial estimate of the mean curve (yellow curve).}
	\label{fig:weather_curves}
\end{figure}


%

\begin{figure}
	\centering 
	\includegraphics[width=10cm]{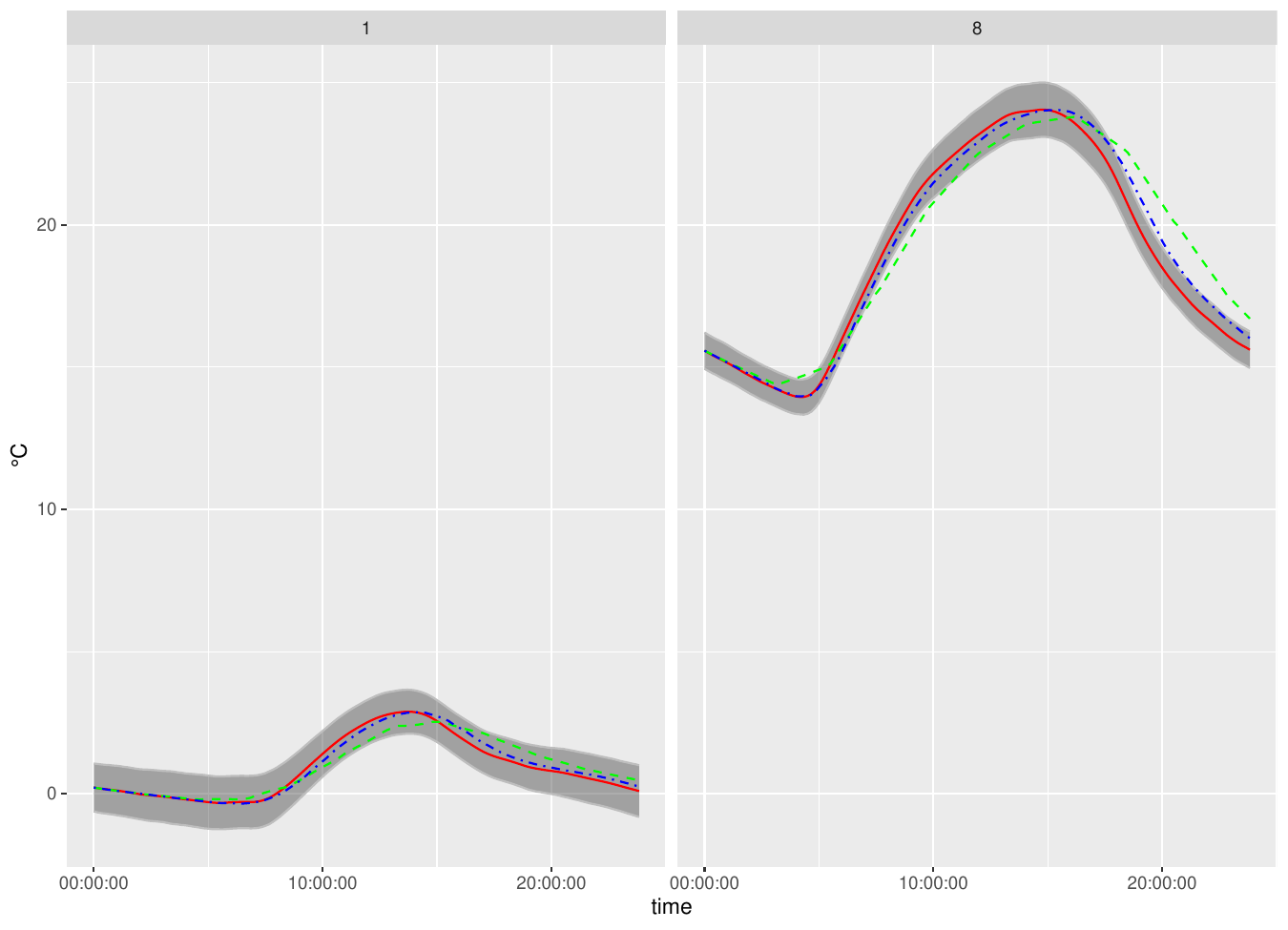}
	\caption{\small LP estimator with $p=144$ (10-minute data, red line), $p=24$ (one hour data, blue dotted/dashed line) and $p = 12$ (two hour data, green dashed line). Confidence bands for the $p=144$ curve from \cite{liebl2019fast} for January and August.}
	\label{fig:weather_conf_bands_jan_aug}
\end{figure}

Next, we use the full data to compute the fast and fair confidence bands proposed in \cite{liebl2019fast} with two time intervals. Finally, we compute the estimator for a coarser discretization for one- and two hour intervals, and investigate whether these curves are still contained in the confidence bands. The results for January and August are displayed in Figure \ref{fig:weather_conf_bands_jan_aug}. While in January, both curves are fully contained in the confidence band, this is not the case in August due to higher temperature variations in day- and night times.  The corresponding plots for all months are given in Figure \ref{fig:weather_conf_bands} in the appendix.  

%


\subsection{Biomechanics}

Second we revisit a data set of a sports biomechanics experiment from \citet{liebl2019fast}. It contains $n=18$ pairs of torque curves in Newton metres (N/m)
standardized by the bodyweight (kg) for runners who wear first, extra cushioned running shoes and second, certain normal cushioned
running shoes. The time scale is normalized to $t \in [0, 100]$ with $p=200$ grid points. See \cite{liebl2019fast} for further details. Figure \ref{fig:diff_curves_bio} contains the curves of pairwise difference for each runner together with a local polynomial estimate of the corresponding mean curve. 
The bandwidth is selected by leave-one-curve-out cross validation, resulting in $h = 5.5$.  

Figure \ref{fig:disc_bio} contains the confidence band computed with the methods from \cite{liebl2019fast} using two intervals in the time span, together with mean curve estimates based on $p=50$ as well as $p=20$ data points in each row. While the fit with $p=50$ is hardly distinguishable from that based on the full data ($p=200$), the estimate based on only $p=20$ is outside the confidence band in the critical starting phase at about $t=6$.  
%

\begin{figure}
	\begin{minipage}[b]{.45\linewidth}
		\centering
		\includegraphics[width=\linewidth]{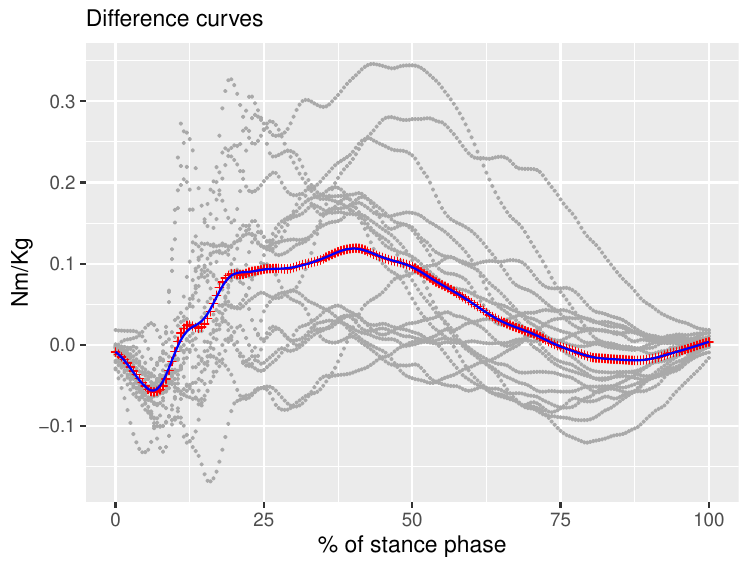}
		\caption{\small Curves of pairwise differences of torque curves with different shoes for each runner, together with an local polynomial estimate of the mean function (solid blue curve)}
		\label{fig:diff_curves_bio}
	\end{minipage}
\hspace{0.5cm}
	\begin{minipage}[b]{.45\linewidth}
		\centering
		\includegraphics[width=\linewidth]{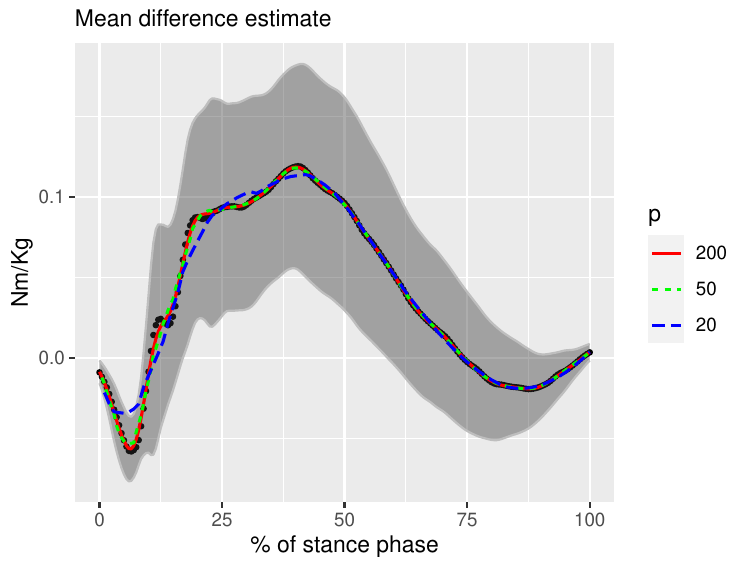}
		\caption{\small Estimated mean difference curves using $p=200$ (solid red), $p=50$ (dotted green) and $p=20$ (dashed blue) design points, and a uniform confidence band computed from the full data}
		\label{fig:disc_bio}
	\end{minipage}	
\end{figure}

%
%

\section{Discussion} \label{sec:discuss}

Let us indicate how to extend our results to more general design assumptions. 
First, we note that if the design satisfies Assumption \ref{ass:design1} and the weights satisfy Assumption \ref{ass:weights}, the upper bound \eqref{eq:upperboundone} in Theorem \ref{theorem:estimation:rates} follows. Here $\bs{ p^1}$ is interpreted as the total number of design points. While the range of admissible bandwidths has to be adapted according to the design under investigation, no Cartesian product structure is required.

Further, the upper bounds in Theorem \ref{theorem:estimation:rates} and even the CLT in Theorem \ref{cor:asymptotic:normality} can be extended to asynchronous design (e.g.~design depending on the observation row $i$) if the number of points in each row is of the same order. Here we use the linear estimator \eqref{eq:linest:mu:bar} in each row, 
\begin{equation}\label{eq:linest:mu:bari}
	\mulei(\bs x) = \sum\limits_{\bs j= \bs 1}^{\bs p_i}   w_{\bs j,\bs p_i,i}(\bs x;h; \bs x_{\bs 1,i},\dotsc,\bs x_{\bs p_i,i}) \, Y_{\bs j} \,, 
\end{equation}
and average to obtain the estimator of $\mu$, potentially using a weighted average \citep{zhang2016sparse}. If the weights $w_{\bs j,\bs p_i,i}$ in each row satisfy Assumption \ref{ass:weights}, with constants $\Cmax, \Clip > 0$ and Assumption \ref{ass:design1} with $\Ccard>0$, which may be chosen independently of $i$, we may obtain results similar to Theorem \ref{theorem:estimation:rates} and Theorem \ref{cor:asymptotic:normality}, where the $n^{-1/2} $ - rate in the dense setting in Theorem \ref{theorem:estimation:rates} follows from the more elaborate manageability argument used in the proof of Theorem \ref{cor:asymptotic:normality}. 

The \textit{smooth first then average} approach discussed above for the asynchronous setting cannot work in the truly sparse case, where the coordinates of $\bs p$ remain bounded. 
Here an alternative is pooling, that is, forming a single linear estimator from all observations potentially with weighting. This has been worked out in detail in \citep{li2010uniform, zhang2016sparse}. In the dense case, however,  only a $ \log n / \sqrt n$ - rate is obtained. Thus, the parametric rate together with the  central limit theorem in supremum norm for  dense designs seem not to be  developed for pooling estimators yet. 



\section{Proofs} \label{sec:main:proofs}

\subsection{Proof of Theorem \ref{theorem:estimation:rates}} \label{sec:proof:theorem:rates}

From the error decomposition \eqref{eq:decomposition} we obtain the bound
\begin{align}
	& \, \sup_{\mu\in\hclass, \, Z \in \Pclass} \E_\mu \Big[ \normib{ \mule - \mu} \Big]\nonumber\\
	\leq & \, \sup_{\mu\in\hclass} \normib{I_1^{\bs p,h}}
	+ \E  \Big[\normib{I_2^{n, \bs p,h}}  \Big]+\sup_{Z \in \Pclass} \E  \Big[ \normib{I_3^{n, \bs p,h}}  \Big] \,.\label{eq:errordecomp}
\end{align}
Theorem \ref{theorem:estimation:rates} follows from \eqref{eq:errordecomp} together with Lemma \ref{theorem:estimation:rates1} below. Note that 
property \ref{ass:weights:sup} in Assumption \ref{ass:weights} together with Assumption \ref{ass:design1} imply the following. 
\begin{enumerate}[label=\normalfont{(W\arabic*)},leftmargin=9.9mm]
		\setcounter{enumi}{4}
			\item 	We have  $ \displaystyle \sum_{\bs 1\leq \bs j \leq \bs p} \big|\wjb xh \big| \leq \Csum$, $\bs x \in T$. \label{ass:weights:sum}
\end{enumerate}
\begin{lemma} \label{theorem:estimation:rates1} 
	In model \eqref{eq:model} under Assumption \ref{ass:model}, consider the linear estimator $\mule$ in \eqref{eq:linest:mu:bar} with error decomposition in \eqref{eq:errordecomp}.  
	\begin{enumerate}
		\item If the weights satisfy \ref{ass:weights:polynom} with $\gamma=\floor \alpha$, \ref{ass:weights:vanish} of Assumption \ref{ass:weights} as well as \ref{ass:weights:sum}, then 
		\begin{align*}
			\sup_{\mu\in\hclass } \normib{I_1^{\bs p,h}} = \Oop{h^\alpha} \, .
		\end{align*}
		
		\item If the weights satisfy \ref{ass:weights:sup}, \ref{ass:weights:lipschitz}  of Assumption \ref{ass:weights} and Assumption \ref{ass:design1}, then 
		\begin{align*}
			\E\Big[\normib{I_2^{n, \bs p,h}} \Big] = \Oop{\sqrt{\frac{\log (h^{-1})}{n\bs p^{\bs 1}h^d}}} \,.
		\end{align*}
		
		\item If the weights satisfy \ref{ass:weights:sum} then 
		\begin{align*}
			\limsup_{n,\bs p \to \infty}\, \sup_{h \in (\bs c/{\bs p}, h_0]} \sup_{Z \in \Pclass}\, \E\Big[ \big\lVert \sqrt n \, I_3^{n, \bs p,h} \big\rVert_\infty^2 \Big] < \infty \,.
		\end{align*}	
		In particular,
		\begin{align*}
			\normib{  I_3^{n, \bs p,h} }= \Oph{n^{-\frac{1}{2}} } \,.
		\end{align*}
	\end{enumerate}
\end{lemma}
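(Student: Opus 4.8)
\emph{Overview.} By the decomposition \eqref{eq:errordecomp} it suffices to bound the three summands separately; parts (i) and (ii) are essentially classical, while (iii) is the delicate point. For part (i), I would fix $\bs x\in T$ and $\mu\in\hclass$ and expand $\mu(\bs x_{\bs k})$ by Taylor's formula around $\bs x$ up to order $\floor\alpha$. By the polynomial reproduction property \ref{ass:weights:polynom} (with degree $\floor\alpha$), all contributions of degree $0,1,\dots,\floor\alpha$ to $I_1^{\bs p,h}(\bs x)=\sum_{\bs k}\wkb xh\big(\mu(\bs x_{\bs k})-\mu(\bs x)\big)$ cancel, leaving only the Taylor remainder at $\bs x_{\bs k}$, which is bounded by a multiple of $\norm{\mu}_{\mc H,\alpha}\,\norm{\bs x_{\bs k}-\bs x}_\infty^{\alpha}$; by \ref{ass:weights:vanish} the weight vanishes unless $\norm{\bs x_{\bs k}-\bs x}_\infty\le h$, so on the support this is at most a multiple of $L\,h^\alpha$. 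Summing and using \ref{ass:weights:sum} gives $\sup_{\bs x}\abs{I_1^{\bs p,h}(\bs x)}\lesssim \Csum L\,h^\alpha$, uniformly in $\mu\in\hclass$.

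\emph{Part (ii).} Here $I_2^{n,\bs p,h}(\bs x)=\sum_{\bs k}\wkb xh\,\oeps{\bs k}$ with $\oeps{\bs k}=n^{-1}\sum_i\e_{i,\bs k}$ independent, centered, and, by Assumption \ref{ass:model}, sub-Gaussian with squared sub-Gaussian norm of order $\sigma^2/n$. Hence $\{I_2^{n,\bs p,h}(\bs x)\}_{\bs x\in T}$ is a sub-Gaussian process with $\norm{I_2^{n,\bs p,h}(\bs x)-I_2^{n,\bs p,h}(\bs y)}_{\psi_2}^2\lesssim \tfrac{\sigma^2}{n}\sum_{\bs k}\big(\wkb xh-\wkb yh\big)^2$. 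Bounding the last sum by $\max_{\bs k}\abs{\wkb xh-\wkb yh}\cdot\sum_{\bs k}\abs{\wkb xh-\wkb yh}$ and combining \ref{ass:weights:lipschitz}, \ref{ass:weights:vanish}, \ref{ass:weights:sup} with Assumption \ref{ass:design1} (which caps the number of active indices by a multiple of $\bs p^{\bs 1}h^d$) gives, for $\norm{\bs x-\bs y}_\infty\le h$, $\sum_{\bs k}\big(\wkb xh-\wkb yh\big)^2\lesssim (\bs p^{\bs 1}h^d)^{-1}\big(\tfrac{\norm{\bs x-\bs y}_\infty}{h}\wedge 1\big)^2$, and similarly $\sum_{\bs k}\big(\wkb xh\big)^2\lesssim (\bs p^{\bs 1}h^d)^{-1}$. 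Thus $I_2^{n,\bs p,h}$ is sub-Gaussian for a rescaled sup-metric of diameter $\simeq (n\bs p^{\bs 1}h^d)^{-1/2}$ with $\log N(T,d,u)\lesssim d\log\!\big((h\,u\,\sqrt{n\bs p^{\bs 1}h^d}\,)^{-1}\big)$; Dudley's entropy bound \citep[Corollary 2.2.8]{van1996weak} and evaluating the resulting integral — the $\sqrt{\log(1/h)}$ comes from the logarithm at the scale of the diameter, and $h\le h_0<1$ keeps $\log(1/h)$ bounded below — give $\E\normib{I_2^{n,\bs p,h}}\lesssim \big(\log(h^{-1})/(n\bs p^{\bs 1}h^d)\big)^{1/2}$.

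\emph{Part (iii).} First I would discard the weights: by \ref{ass:weights:sum}, $\abs{I_3^{n,\bs p,h}(\bs x)}\le\Csum\max_{\bs k}\abs{\oZ n(\bs x_{\bs k})}\le\Csum\normib{\oZ n}$, so it suffices to prove $\sup_n\sup_{Z\in\Pclass}\E\big[\normib{\sqrt n\,\oZ n}^{\,2}\big]<\infty$, which eliminates all dependence on $\bs p$ and $h$. Splitting off the value at $\bs 0$ — whose square has expectation $\le\E[Z(\bs 0)^2]\le C_Z$ — leaves the centered average $\bs x\mapsto n^{-1/2}\sum_i\big(Z_i(\bs x)-Z_i(\bs 0)\big)$. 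The naive bound by the pathwise Hölder constant of $n^{-1/2}\sum_iZ_i$ fails, since that constant has second moment of order $n$; the gain must come from the independence of the $Z_i$. I would therefore run a dyadic chaining over $2^{-l}$-nets $T_l\subset T$ in $\norm\cdot_\infty$ with $\card T_l\lesssim 2^{ld}$: at level $l$ one controls the maximum over $\lesssim 2^{ld}$ increments $n^{-1/2}\sum_i\big(Z_i(\bs u)-Z_i(\bs v)\big)$ with $\norm{\bs u-\bs v}_\infty\lesssim 2^{-l}$, whose summands are centered, bounded in modulus by $M_i(2^{-l})^\beta$, and of variance $\le C_Z(2^{-l})^{2\beta}$. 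Truncating the Hölder constants $M_i$ at level $\sqrt n$, Bernstein's inequality applies to the truncated, recentered part — the per-summand recentering bias being $\le C_Z(2^{-l})^\beta/\sqrt n$ by $\E[M^2]\le C_Z$, hence $\le C_Z(2^{-l})^\beta$ after $\sqrt n$-scaling — and yields a Bernstein tail of sub-Gaussian scale $\lesssim\sqrt{C_Z}\,(2^{-l})^\beta$; the untruncated part is dominated by $(2^{-l})^\beta\,n^{-1/2}\sum_iM_i\one_{\{M_i>\sqrt n\}}$, of expectation $\le (2^{-l})^\beta\sqrt n\,\E\big[M\one_{\{M>\sqrt n\}}\big]\le (2^{-l})^\beta C_Z$. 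A union bound over the $\lesssim 2^{ld}$ increments costs only a factor $\sqrt{ld}+ld$, and since $(2^{-l})^\beta\le 2^{-l\beta_0}$ decays geometrically uniformly in the class, summing over $l\ge 0$ yields a bound on $\E\normib{\sqrt n\,\oZ n}$ depending only on $\beta_0,d,C_Z$; carrying out the same chaining with $L_2$-norms (Minkowski across levels) gives the stated second-moment bound, which is then uniform in $n$ and over $\Pclass$. Finally $\normib{I_3^{n,\bs p,h}}=\Oph{n^{-1/2}}$ follows by Markov's inequality, uniformly in $h\in(c/\bs p,h_0]$.

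\emph{Main obstacle.} The crux is part (iii): one needs the $n^{-1/2}$ supremum-norm rate for an average of processes whose paths are far rougher than $\mu$, and the pathwise Hölder estimate is useless because the Hölder modulus of the average does not concentrate. The truncated dyadic chaining above — equivalently, a manageability/bracketing-entropy argument for the empirical process indexed by $T$, as used in the proof of Theorem \ref{cor:asymptotic:normality} — is precisely what turns independence into the rate; balancing the truncation level against the recentering bias and the $\sqrt n$-scaling is the one genuinely delicate point, the rest being routine.
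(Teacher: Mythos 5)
Your proposal is correct and follows essentially the same route as the paper: part (i) is the standard Taylor/polynomial-reproduction argument, part (ii) is Dudley's entropy bound applied to the rescaled sub-Gaussian process with the same semimetric estimate $\sum_{\bs k}\big(\wkb xh-\wkb yh\big)^2\lesssim(\bs p^{\bs1}h^d)^{-1}\big(\norm{\bs x-\bs y}_\infty/h\wedge1\big)^2$, and part (iii) starts from the same reduction $\abs{I_3^{n,\bs p,h}(\bs x)}\le\Csum\normib{\oZ{n}}$ via \ref{ass:weights:sum}. The only deviation is that in (iii) the paper then invokes Pollard's manageability-based maximal inequality with envelope $n^{-1/2}\big(M_i+\abs{Z_i(\bs0)}\big)$ and capacity exponent $\kappa=1/\beta_0$, whereas you carry out an explicit truncated dyadic chaining with Bernstein's inequality; this is a correct, self-contained substitute for the same mechanism (random H\"older modulus as envelope, entropy of $[0,1]^d$, independence across $i$), as you yourself note.
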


\begin{proof}[Proof of Lemma \ref{theorem:estimation:rates1}]
	\textit{i):} This is a well-known result in nonparametric curve estimation by using Taylor expansion and the properties of the weights. The proof can be found in the Appendix, Section \ref{sec:proof:bias}.

	\medskip

\textit{iii):} 
For processes $Z$ in the class $\Pclass$, we have that 
$\frac1{\sqrt n} \, \abs{ \sum_{i=1}^n\,Z_i(\bs{x}) } $ is uniformly bounded in expected value. For example, by using the inequality \eqref{eq:maxboundpollard} in the appendix, which is \citet[Section 7, display (7.10)]{pollard1990empirical}, we explicitly have that 
\begin{equation}\label{eq:boundsup}
	\expec\Big[\sup_{\bs x\in [0,1]^d} \, \frac1{\sqrt n} \, \abs{ \sum_{i=1}^n\,Z_i(\bs{x}) }\Big] \leq \text{const}\, (C_Z/\beta_0)^{1/2},
\end{equation}
see Lemma \ref{lem:amxboundhoelder} below. 
Therefore, by using property \ref{ass:weights:sum} of the weights we obtain 
\begin{align*}
	\E\Big[ \big\lVert \sqrt n \, I_3^{n,\bs p, \bs h} \big\rVert_\infty \Big] & = \expec\bigg[\sup_{\bs x\in [0,1]^d} \sqrt n\, \Abs{\sum_{\bs j = \bs1}^{\bs  p}\wjb xh \bar Z_n(\bs{x_j}) }\bigg]\\
	& \leq \Csum \,\expec\bigg[\sup_{\bs x\in [0,1]^d} \abs{ \frac1{\sqrt n}\, \sum_{i=1}^nZ_i(\bs{x}) }\bigg]\tag{by \ref{ass:weights:sum}}\\
	& \leq \text{const}\,  (C_Z/\beta_0)^{1/2} < \infty. \tag{by \eqref{eq:boundsup}}
\end{align*}

	\medskip
	
	\textit{ii):} 
 	We shall apply Dudley's entropy bound \citep[Corollary 2.2.8]{van1996weak}. To this end, note that by Assumption \ref{ass:model}, $\bar\e_{\bs 1},\ldots,\bar \e_{\bs p}$ are independent, centered, sub-Gaussian random variables with parameters bounded by $\zeta^2\sigma^2/n>0$ and $\E[\bar \e_{\bs j, n}^2]=\sigma_{\bs j}^2/n$, $\bs j=\bs 1,\dotsc,\bs p$. 
 	%
 	Therefore, the process
 	$$S_{n,\bs p, h}(\bs x) = \sqrt{n\bs{p^1}h^d}\,{I_2^{n,\bs p, h}}(\bs x)$$ 
	is a sub-Gaussian process w.r.t.~the semimetric 
	 \begin{align}
		\dd_{S}^2(\bs x, \bs y) & : =  \zeta^2\, \sigma^2\, \bs{p^1}h^d \sum_{\bs j = \bs 1}^{\bs p}\big( \wjb{x}{h} -\wjb{y}{h} \big)^2  \nonumber \\
		&\leq M^2 \bigg(\frac{\norm{\bs x - \bs y}_\infty}{h}\wedge 1\bigg)^2
		\leq M^2\,, \label{eq:metric:Stilde}
	\end{align}
	where $M = \sqrt{2\Ccard} \Clip \, \zeta\, \sigma$, and we used the Lipschitz continuity of the weights, \ref{ass:weights:lipschitz}.  Now, given $0 < \delta < M$ to bound the covering number $	N\big( [0,1]^d,\delta;\dd_S \big)$ and hence the packing number $	D\big( [0,1]^d,\delta;\dd_S \big)$ of $[0,1]^d$ w.r.t.~$\dd_S$, note that from \eqref{eq:metric:Stilde}, 
	 \begin{align*}
		\dd_S(\bs x, \bs \tau) \leq \delta \qquad \Longrightarrow  \qquad \normi{\bs \tau_{j}-\bs x}\leq \frac{\delta h}M, \qquad \bs x, \bs \tau \in [0,1]^d.
	\end{align*}
	We obtain
	\begin{equation}
		D\big( [0,1]^d,\delta;\dd_S \big) \leq N\big( [0,1]^d,\delta/2;\dd_S \big) \leq  \bigg(\frac {2M}{\delta {h}} \bigg)^d . \label{eq:bound:packing}
	\end{equation}
	By observing from \eqref{eq:metric:Stilde} that the diameter of $[0,1]^d$ under $\dd_S$ is upper bounded by $2 M$, Dudley's entropy bound implies that 
  	\begin{align} \label{eq:integral:packing}
 	\E\Big[\sup_{\bs x \in [0,1]^d} \big|S_{n,\bs p, h}(\bs x)\big| \Big] \leq \E\Big[ \big| S_{n,\bs p, h}(\bs x_0) \big| \Big]+ \Csum \int_0^{2 M} \sqrt{\log \Big( D\big( [0,1]^d,\delta;\dd_S \big) \Big)} \,\dx \delta
 \end{align}
 for fixed  $\bs x_0 \in [0,1]^d$ and a universal constant $\Csum>0$. 	Using \eqref{eq:bound:packing}, the integral in the second term is bounded by 
 \begin{align*}
 	 & \int_0^{2M} \sqrt{\log\big((2M / (\delta h)^d) \big)} \dx \delta
 	 = \frac{2 M\,d}{h} \int_0^{h} \sqrt{\log(\delta^{-1})} \,\dx \delta \\
 	\leq \, & 2\,  M\,d \bigg(\sqrt{-\log (h)}-\frac{1}{2\sqrt{-\log (h)}}\bigg) 
 	=\Oop{ \sqrt{-\log (h)} } \,.
 \end{align*} 
 For the first term in \eqref{eq:integral:packing}, apply Jensen's inequality to obtain
 \begin{align*}
 	\E\Big[ \big| S_{\bs p}(\bs x_0) \big| \Big]^2 &\leq \E\big[\widetilde{S}_{\bs p}^2(\bs x_0)\big] \leq \sigma^2 \bs{p^1}h^d \sum_{\bs j = \bs 1}^{\bs p} \wjb{x_0}{h}^2 \leq \sigma^2 \Csum \Cmax < \infty
 \end{align*}
 by 
 \ref{ass:weights:sup} and \ref{ass:weights:sum}. This concludes the proof of the theorem.  	 	
\end{proof}

\subsection{Proof of Theorem \ref{theorem:optimality} (optimality)}\label{sec:proof:theorem:optimality}

\begin{proof}[Proof of Theorem \ref{theorem:optimality}.]

In the proof we rely on the reduction to hypothesis testing as presented e.g.~in \citet[section 2]{tsybakov2008introduction}.

For the lower bound $\bs p_{\min}^{-\alpha}$, using the method of two sequences of hypotheses functions we set $\mu_0=0$ and construct $\mu_1 = \mu_{1, \bs p}$ such that $\normi{\mu_0 - \mu_{1, \bs p}} \geq c\, \bs p_{\min}^{-\alpha}$ for some constant $c>0$ and that $\mu_{1, \bs p}(\bs x_{\bs j})=0$ at all design points $\bs x_{\bs j}$, so that the distribution of the observations for $\mu_0$ and $\mu_1$ coincide.  

If $r$ is such that $p_r = \bs{p}_{\min}$, observing Assumption \ref{ass:designdensity} for a constant $\tilde L>0$ to be specified we set
 $$g_{\bs p}(\bs x) = g(\bs x)= \tilde L\, \bigg(\frac {1}{p_r {f_{\max}}_r}\bigg)^{\alpha} 
		\exp\bigg(-\frac1{1-x_r^2}\bigg)\one_{\{|x_r| < 1\} }, \qquad \bs x = (x_1, \ldots, x_d),$$
	and for some fixed $ 1 \leq l \leq p_r - 1$ let
 $$ \mu_1(\bs x)= g\big((2p_r {f_{\max}}_r)\, (x_r -(x_{r,l}+ x_{r,l+1})\,/\,2)\big). $$
For the design points $x_{r,l}$ in the $r^{\mathrm{th}}$ coordinate, by Assumption \ref{ass:designdensity} the distance is at least $p_r {f_{\max}}_r$, and therefore by the definition of $g$ and the Cartesian product structure of the design it follows that $\mu_{1, \bs p}(\bs x_{\bs j})=0$ at all design points. Further, if the $r^{\mathrm{th}}$ coordinate is $(x_{r,l}+ x_{r,l+1})\,/\,2$, $\mu_{1, \bs p}$ takes the value $p_r^{- \alpha}\, \tilde L\, e^{-1} / {f_{\max}}_r^\alpha$, so that $\normi{\mu_0 - \mu_{1, \bs p}} \geq c\, p_{\min}^{-\alpha}$ holds true. Finally, using the chain rule and the fact that all derivatives of the bump function in the definition of $g$ are uniformly bounded, $\mu_1$ is $\alpha$ - Hölder smooth (indeed all partial derivatives except in direction $r$ vanish) with constant proportional to $\tilde L$, which can be adjusted to yield the Hölder norm $L$. This concludes the proof for the lower bound $\bs p_{\min}^{-\alpha}$.

    Let us turn to the lower bound of order $(\log(n\,\bs{p^1})/(n{\bs p ^1}))^{\alpha/(2\alpha+d)}$, which corresponds to the minimax rate in $d$ dimensions in the conventional nonparametric regression model with $n{\bs {p ^1}}$ design points. Taking $Z = 0\;(\in \Pclass)$  we can adapt the argument in \citet[Theorem 2.10]{tsybakov2008introduction}. Indeed, by sufficiency and taking averages using the normal distribution we could reduce the analysis to a single row with errors distributed as $\mathcal N(0, \sigma_0^2/n)$.   
    For convenience we provide the details, and do not use sufficiency, so that the argument can be extended to further error distributions  \citep[Assumption B, Section 2.3]{tsybakov2008introduction}.   
    %
    %
      For sufficiently small $c_i$, $i=0,1$ to be specified set
    \[N_{n,\bs p}= \ceil{c_0\bigg(\frac{n\bs{p^1}}{\log(n\bs{p^1})}\bigg)^{\frac1{2\alpha+d}}}, \qquad h_{n,\bs p} = N_{n,\bs p}^{-1}, \qquad s_{n,p}\defeq  c_1\, h_{n,\bs p}^{\alpha}.\]
    For $\tilde L$ to be specified let
    $$\tilde g(\bs x) = \tilde L h_{n,\bs p}^{\alpha} \exp\Big(-\big(1-\norm{\bs x}_2^2\big)^{-1}\Big)\one_{\{\normz{\bs x}< 1\} }$$
    and for $$\bs l = (l_1, \ldots, l_d) \in \{1, \ldots, N_{n,\bs p} \}^d$$ set
    $$ \mu_{\bs l}(\bs x) = \tilde g\big(2\, (\bs x - \bs z_{\bs l})/h_{n,\bs p}\big), \qquad \bs z_{\bs l}= \big(l_1-1/2,\ldots,l_d-1/2\big)/N_{n,\bs p}. $$

      Using the chain rule one checks that $\mu_{\bs l} \in \hclass$ for suitable choice of $\tilde L$ depending on $L$ and $c_0$. Further, by construction the $\mu_{\bs l}$ have disjoint supports in $[0,1]^d$, so that $\normi{\mu_{\bs l} - \mu_{ \bs r}}\geq 2\,s_{n,\bs p}$ for all $\bs l \neq \bs r$, and $c_1$ sufficiently small depending on $c_0$. 
 %
	
	To apply \citet[Proposition 2.3]{tsybakov2008introduction} and thus to obtain $s_{n,\bs p}$ as lower bound for the rate of convergence, it remains to show for a $\kappa \in (0,1/8)$ and $n$ and $\bs p$ large enough that
%
 %
	\begin{equation}\label{eq:boundkulback}	
\frac{1}{N_{n,\bs p}^d} \sum_{\bs l}   \KL( \Prob_{\mu_{\bs l}}, \Prob_0) \leq d\, \kappa \, \log(N_{n,\bs p})\qquad \text{ for each } \bs l,
  \end{equation}
    where $\Prob_{\mu_{\bs l}}$ is the (joint normal) distribution of the observations for the mean function $\mu_{\bs l}$. 
	To this end, since Kullback-Leibler divergence for product measures is additive, we can estimate
 %
	%
	\begin{align*}
		\frac{1}{N_{n,\bs p}^d} \sum_{\bs l}  \KL(\Prob_{\mu_{\bs l}},\Prob_0) 
		&= \frac{1}{N_{n,\bs p}^d}\, \frac{n}{2 \sigma_0^2}\,\sum_{\bs l}\, \sum_{\bs 1\leq \bs j \leq \bs p}\,  \mu_{\bs l}^2(\bs{x_j}) \tag{normal distr.}\\
		&\leq \frac{n}{2 \sigma_0^2}\, \frac{\tilde L^2}{e^2}\, h_{n,\bs p}^{2\alpha+d}\sum_{\bs 1\leq \bs j \leq \bs p}\, \sum_{\bs l}\,\one_{\{2\normz{\bs{x_j}-\bs z_l} < h_{n,\bs p}\}} \tag{constr.~of $\mu_{\bs l}$}\\
		& \leq \frac{n}{2 \sigma_0^2}\, \frac{\tilde L^2}{e^2}\,\frac{h_{n,\bs p}^{2\alpha+d}}{2^{2\alpha-d}}
		\,\bs{p^1}\\
        & \leq \text{const.}\,  \log(n\, \bs p^{\bs 1}), \tag{choice of $h_{n,\bs p} = N_{n,\bs p}^{-1}$}
	\end{align*}
 which implies \eqref{eq:boundkulback} and completes the proof for the lower bound $(\log(n\,\bs{p^1})/(n{\bs p ^1}))^{\alpha/(2\alpha+d)}$.

Finally for the lower bound of order $n^{-1/2}$ we may follow the argument in \citet{cai2011optimal}: Take $\mu$ to be a constant function, and $Z_i$ to be a random constant, which is centered normally distributed with variance $\leq C_Z$. Since the Kullback - divergence between two normal distributions with different mean and equal variance only decreases under convolution with a further centered normal distribution, it suffices to obtain the lower bound $n^{-1/2}$ in the model without errors $\e_{i, \bs j}$. But then it follows as the known rate of convergence for estimating a normal mean in a sample of size $n$. This concludes the proof of Theorem \ref{theorem:optimality}.  
\end{proof}







\subsection{Manageability and a maximal inequality from \citet{pollard1990empirical}} \label{sec:maxinequa}

We recall the concept of manageability of a triangular array of processes from \citet[Section 7]{pollard1990empirical}, which we require to prove \eqref{eq:boundsup} as well as Theorem \ref{cor:asymptotic:normality}.


For a bounded subset $B \subset \R^n, n\in\N$, we call a vector $ \bs\Phi = (\Phi_1,\ldots, \Phi_n)^\top\in \R^n$ an \textit{envelope} for $B$ if $\abs{b_i} \leq \Phi_i$, for all $\bs b = (b_1,\ldots, b_n)^\top \in B$.
For a triangular array $(X_{n,i}(x))_{n \in \N, 1\leq i \leq k_n}\colon \Omega\times [0,1]^d \to \R$ of stochastic processes with independent rows we call the sequence $\bs \Phi_n$ of $k_n$-dimensional random vectors a sequence of \textit{envelopes} of $(X_{n,i})$ if $\bs \Phi_n(\omega)$ is an envelope of the set $F_n(\omega)\defeq \big\{(X_{n,1}(\omega, \bs x) ,\ldots,X_{n,k_n}(\omega, \bs x))^\top \mid \bs x \in [0,1]^d\big\}$ for $n  \in \N$ and $\omega \in \Omega$.

Further  we call a triangular array of processes $(X_{n,i}(\bs x)), i = 1, \ldots, k_n, \bs x \in [0,1]^d$ \textit{manageable} (with respect to its envelope $\Phi_n$) \citep[see Definition 7.9, p. 38]{pollard1990empirical} if there exists a deterministic function $\lambda\colon[0,1] \to  \R$, the \textit{capacity bound}, for which
\begin{enumerate}
	\item $\int_0^1 \sqrt{\log ( \lambda(x))}\dx \varepsilon < \infty,$
	\item $ D(\varepsilon\norm{\bs \alpha \circ \Phi_n(\omega)}_2, \bs \alpha \circ F_n(\omega)) \leq \lambda(\varepsilon)$ pointwise for $\varepsilon \in [0,1]$ and all $\bs \alpha \in \R_+^n$, $n \in \N$ and all $\omega\in \Omega$
\end{enumerate}
where $ D(\varepsilon\norm{\bs \alpha \circ \Phi_n(\omega)}_2, \bs \alpha \circ F_n(\omega))$ is the packing number and $\bs x \circ \bs y$ denotes the Hadamard (component-wise) product of two vectors of equal dimension. 
A sequence $(X_{i})$ of processes \textit{manageable} if the array defined by $X_{n,i} = X_i $ for $i\leq n$ is manageable. 
To check manageability we shall use the following Lemma.

\begin{lemma}\label{lem:manageability}
	Consider the triangular array $(X_{n,i})_{n\in \N, 1\leq i \leq k_n}$ of stochastic processes on $[0,1]^d$ with a suiting sequence of envelopes $(\bs \Phi_n)_{n \in \N}$. If there are constants $K_1, K_2, \kappa \in \R^+$ such that for all $\bs x,\bs x^\prime \in [0,1]^d, n\in\N $ and $\epsilon>0 $ it holds that
	\begin{equation}\label{eq:measucondprocess}
		\norm{\bs x - \bs x^\prime}_2 \leq K_1 \epsilon^\kappa \Rightarrow \big|X_{n,i}(\bs x) - X_{n,i}(\bs x^\prime)\big| \leq \epsilon K_2 \Phi_{n,i}, 
	\end{equation}
	where $i =1,\ldots, k_n$, and $\bs \Phi_n = (\Phi_{n,1},\ldots, \Phi_{n,k_n})^\top$, then we have for the covering number that
	\begin{equation*}
		N(\epsilon \norm{\bs \alpha \circ \bs \Phi_n}_2, \bs \alpha  \circ F_n) \leq \bigg(\frac{K_2^\kappa}{2K_1}\epsilon^{-\kappa} + 2\bigg)^d =: \lambda_\kappa(\varepsilon).
	\end{equation*}
	Further it holds that 
	\begin{equation}\label{eq:theconstant}
		\Lambda_\kappa \defeq \int_0^1 \sqrt{\log(\lambda_\kappa(\varepsilon))}\, \dx \epsilon < \infty
%
	\end{equation}
\end{lemma}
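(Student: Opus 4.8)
The plan is to verify the two claims of Lemma~\ref{lem:manageability} in turn: first the covering-number bound, then the finiteness of the capacity integral $\Lambda_\kappa$.

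\textbf{Step 1: From the modulus-of-continuity condition to a covering bound.} Fix $n$, $\bs\alpha \in \R_+^{k_n}$, $\omega \in \Omega$ and $\varepsilon \in (0,1]$. The set $\bs\alpha \circ F_n(\omega) \subset \R^{k_n}$ is the image of the curve $\bs x \mapsto \big(\alpha_1 X_{n,1}(\omega,\bs x),\ldots,\alpha_{k_n} X_{n,k_n}(\omega,\bs x)\big)$, $\bs x \in [0,1]^d$. The idea is to cover the index cube $[0,1]^d$ by small $\norm{\cdot}_2$-balls and push them forward. Partition $[0,1]^d$ into a grid of subcubes of side length $\ell$; then any two points $\bs x,\bs x'$ in the same subcube satisfy $\norm{\bs x - \bs x'}_2 \leq \sqrt d\,\ell$. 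Choosing $\ell$ so that $\sqrt d\,\ell = K_1(\varepsilon/2)^\kappa$ — i.e.\ using the hypothesis \eqref{eq:measucondprocess} with $\varepsilon/2$ in place of $\varepsilon$ — we get from \eqref{eq:measucondprocess} that $\abs{X_{n,i}(\bs x) - X_{n,i}(\bs x')} \leq (\varepsilon/2) K_2 \Phi_{n,i}$ coordinatewise, hence
\[
	\norm{\bs\alpha\circ X_{n,\cdot}(\bs x) - \bs\alpha\circ X_{n,\cdot}(\bs x')}_2 \leq (\varepsilon/2) K_2 \norm{\bs\alpha\circ\bs\Phi_n}_2 .
\]
Thus the images of the grid points form an $\varepsilon\norm{\bs\alpha\circ\bs\Phi_n}_2$-net of $\bs\alpha\circ F_n(\omega)$ (the factor $K_2$ is absorbed by adjusting $\ell$, and the $1/2$ gives room, noting a $\delta/2$-net in the index set yields a $\delta$-cover of the image after the radius comparison — this is where one must be slightly careful and simply take the grid fine enough). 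The number of grid cells is $\lceil 1/\ell\rceil^d \leq (1/\ell + 1)^d$, and substituting $\ell = K_1(\varepsilon/2)^\kappa/\sqrt d$ (or, after absorbing constants into $K_1,K_2$ as the statement implicitly allows, $\ell$ proportional to $\varepsilon^\kappa$) yields a bound of the form $\big(\tfrac{K_2^\kappa}{2K_1}\varepsilon^{-\kappa} + 2\big)^d = \lambda_\kappa(\varepsilon)$. Since this holds pointwise in $\omega$, $n$, $\bs\alpha$, it gives the asserted covering-number bound; the packing number used in the manageability definition is then at most $N(\varepsilon/2,\cdot) \leq \lambda_\kappa(\varepsilon/2)$, or one simply notes $D(\varepsilon,\cdot) \leq N(\varepsilon/2,\cdot)$ and the same polynomial form persists.

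\textbf{Step 2: Finiteness of the capacity integral.} It remains to show $\Lambda_\kappa = \int_0^1 \sqrt{\log \lambda_\kappa(\varepsilon)}\,\dx\varepsilon < \infty$. Since $\lambda_\kappa(\varepsilon) = \big(\tfrac{K_2^\kappa}{2K_1}\varepsilon^{-\kappa}+2\big)^d$, we have $\log\lambda_\kappa(\varepsilon) = d\log\big(\tfrac{K_2^\kappa}{2K_1}\varepsilon^{-\kappa}+2\big)$. For $\varepsilon \in (0,1]$ this is bounded above by $d\big(\log(\tfrac{K_2^\kappa}{2K_1}+2) + \kappa\log(1/\varepsilon)\big)$ up to a constant, so $\sqrt{\log\lambda_\kappa(\varepsilon)} \leq C_1 + C_2\sqrt{\log(1/\varepsilon)}$ for suitable constants depending on $d,\kappa,K_1,K_2$. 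The integral $\int_0^1 \sqrt{\log(1/\varepsilon)}\,\dx\varepsilon$ converges (substitute $u = \log(1/\varepsilon)$, giving $\int_0^\infty \sqrt u\, e^{-u}\,\dx u = \Gamma(3/2) < \infty$), so $\Lambda_\kappa < \infty$. This also verifies condition~(1) in the definition of manageability.

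\textbf{Main obstacle.} The only genuinely delicate point is Step~1: matching the radius in the index space to the radius $\varepsilon\norm{\bs\alpha\circ\bs\Phi_n}_2$ in the image space with the \emph{exact} constants $\tfrac{K_2^\kappa}{2K_1}$ and $+2$ claimed in the statement, keeping track of the $\sqrt d$ from passing between $\norm{\cdot}_\infty$ and $\norm{\cdot}_2$ on the grid and of the ceiling in the grid count. In practice one chooses the grid side length to be the largest $\ell$ of the form $1/N$, $N\in\N$, with $\sqrt d\,\ell$ below the threshold forced by \eqref{eq:measucondprocess}; the $+2$ absorbs the rounding $\lceil 1/\ell\rceil \leq 1/\ell + 1$ together with a harmless further slack. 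Everything else is routine.
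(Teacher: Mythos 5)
Your proposal is correct and follows essentially the same route as the paper: cover the index cube $[0,1]^d$ by balls whose radius is chosen via the hypothesis applied at level $\varepsilon/K_2$ (which is exactly your ``absorb $K_2$ by adjusting $\ell$''), push the centres forward to get an $\varepsilon\norm{\bs\alpha\circ\bs\Phi_n}_2$-net of $\bs\alpha\circ F_n$, and then bound $\int_0^1\sqrt{\log(1/\varepsilon)}\,\dx\varepsilon$ by an elementary substitution. The constant-matching issue you flag (the $\sqrt d$ between $\norm{\cdot}_\infty$ and $\norm{\cdot}_2$ and the ceiling in the grid count) is treated with the same level of looseness in the paper itself, so it is not a gap relative to the reference proof.
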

\begin{proof}[Proof of Lemma \ref{lem:manageability}]
	Let $\tau_1,\ldots, \tau_N \in [0,1]^d$ be such that for $\bs x\in [0,1]^d$ there exists $\tau_m$ such that $\norm{\bs x - \tau_m}_2^2 \leq \frac{K_1}{K_2^\kappa}\,\epsilon^\kappa$. For such $\bs x$ and $\tau_m$, from \eqref{eq:measucondprocess} we obtain
	\begin{align*}
		\abs{X_{n,i}(x) - X_{n,i}(\tau_m)} \leq \epsilon \Phi_{n} \quad \text{and hence} \quad \norm{\alpha \circ X_{n}(x) - \alpha \circ X_{n,i}(\tau_m)}_2 \leq \epsilon \norm{\alpha \circ \Phi_{n}}_2
	\end{align*}
for all $\alpha \in \R^d$. 
	Therefore the amount of balls with radius $\epsilon\, \norm{\alpha \circ \Phi_{n}}_2$ at points $T_m = \alpha \circ X_{n}(\tau_m)$ needed to cover $\alpha \circ F_n$ is upper bounded by the number of balls with radius $K_1\epsilon^\kappa K_2^{-\kappa}$ needed to cover $[0,1]^d$, which in turn is upper bounded by 
	\[ \lambda_\kappa^d(\epsilon) \defeq \bigg( \frac{K_2^\kappa}{2K_1\epsilon^\kappa} + 2\bigg)^d\,.\]
	The integral in \eqref{eq:theconstant} is finite since
	\begin{align*}
		\int_0^1 \sqrt{\log(\lambda_\kappa^d(\epsilon))} \dx \epsilon & \leq \sqrt d \int_0^1 \bigg(\sqrt{\log \Big( \frac{K_2^\kappa}{2\,K_1}\Big)} + \sqrt{- \kappa\,\log(\epsilon)} + \sqrt{\log\Big(1+ \frac{K_1}{K_2^\kappa}\Big)}\,\bigg)\, \dx \epsilon \\
		& \leq \sqrt{\frac{d\,\kappa \,\pi}2} + \text{const} < \infty\,.
	\end{align*}
\end{proof}

Let us state the maximal inequality \citet[Section 7, display (7.10)]{pollard1990empirical}. 
\begin{lemma}\label{lem:maximalInequalityExpection}
	Let $(X_{n,i})$ be a triangular array of stochastic processes with a suitable sequence of envelopes $(\Phi_n)$. Further let $\lambda\colon [0,1] \to \R$ be the capacity bound such that $(X_{n,i})$ is manageable with respect to $(\Phi_n)$ and $\lambda$. 
	We write 
	\begin{equation*}
		S_n(\bs x) \defeq \sum_{i=1}^{k_n} X_{n,i}(\bs x) \qquad \textit{and} \qquad M_n(\bs x) \defeq \expec\big[S_n(\bs x)\big],
	\end{equation*}
	then for all $1\leq p <\infty$ there exists a constant $\tilde C_p>0 $ such that 
	\begin{equation}\label{eq:maxboundpollard}
		\expec\big[\norm{S_n - M_n}^p_\infty\big] \leq \tilde C_p \, \Big(\int_0^1 \sqrt{\log ( \lambda(\varepsilon))}\dx \varepsilon\Big)^p \expec\big[\norm{\bs \Phi_n}_2^p\big].
	\end{equation}
\end{lemma}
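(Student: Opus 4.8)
The plan is to invoke the maximal inequality from Pollard's 1990 monograph essentially verbatim, after checking that our standing hypotheses match the ones required there. First I would recall that the statement we want is precisely display (7.10) in \citet[Section 7]{pollard1990empirical}, which is formulated for triangular arrays of processes with independent rows that are manageable with respect to a sequence of envelopes $(\bs\Phi_n)$ and a capacity bound $\lambda$. Thus the proof reduces to a pointer: under our hypotheses (independent rows, suitable envelopes, manageability with capacity bound $\lambda$ satisfying $\int_0^1\sqrt{\log\lambda(\varepsilon)}\,\dx\varepsilon<\infty$) Pollard's functional is well defined and (7.10) applies, giving
\begin{equation*}
	\expec\big[\norm{S_n - M_n}^p_\infty\big] \leq \tilde C_p \, \Big(\int_0^1 \sqrt{\log ( \lambda(\varepsilon))}\,\dx \varepsilon\Big)^p \expec\big[\norm{\bs \Phi_n}_2^p\big]
\end{equation*}
with a constant $\tilde C_p$ depending only on $p$.

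More explicitly, I would proceed in three short steps. \emph{Step 1:} note that by definition of manageability (stated earlier in this subsection, following \citet[Definition 7.9]{pollard1990empirical}) the packing numbers $D(\varepsilon\norm{\bs\alpha\circ\bs\Phi_n}_2,\bs\alpha\circ F_n)$ are dominated pointwise by $\lambda(\varepsilon)$, uniformly in $\bs\alpha\in\R_+^{k_n}$, $n$ and $\omega$; this is exactly the input Pollard needs for his symmetrization-plus-chaining argument. \emph{Step 2:} Pollard's argument symmetrizes $S_n-M_n$ using an independent Rademacher sequence (legitimate since the rows are independent), conditions on the data, and runs a chaining bound along the random semimetric $\dd_n(\bs x,\bs y)=\norm{\bs\alpha\circ(X_{n,\cdot}(\bs x)-X_{n,\cdot}(\bs y))}_2$ whose diameter is controlled by $\norm{\bs\Phi_n}_2$; the chaining integral is bounded by $\int_0^1\sqrt{\log\lambda(\varepsilon)}\,\dx\varepsilon\cdot\norm{\bs\Phi_n}_2$. \emph{Step 3:} taking $L^p$ norms, undoing the conditioning, and absorbing the symmetrization factor $2$ into $\tilde C_p$ yields the claim. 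I would state that the detailed verification is carried out in \citet[Section 7]{pollard1990empirical} and simply cite it, since reproducing the chaining argument is not the point here.

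The only genuine content on our side—and hence the one place where the reader should be reassured rather than referred out—is that the hypotheses of Pollard's theorem are genuinely met in our applications; that is handled separately by Lemma \ref{lem:manageability}, which converts the concrete Hölder-type modulus-of-continuity bound \eqref{eq:measucondprocess} into the abstract packing-number bound $\lambda_\kappa(\varepsilon)=\big(K_2^\kappa/(2K_1)\,\varepsilon^{-\kappa}+2\big)^d$ with finite entropy integral. So there is no real obstacle in the present lemma; the substantive work is the manageability check, which we have isolated into the companion lemma, and the remaining step is purely a citation of the maximal inequality with the correct identification of envelope and capacity bound.
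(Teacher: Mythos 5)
Your proposal matches the paper exactly: the lemma is stated there as a direct quotation of \citet[Section 7, display (7.10)]{pollard1990empirical} with no further proof, which is precisely the citation you give, and your sketch of Pollard's symmetrization-plus-chaining argument is a correct (if optional) gloss on what that reference contains. No gap.
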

From Lemmas \ref{lem:manageability} and \ref{lem:maximalInequalityExpection} we may conclude the bound \eqref{eq:boundsup}, which we state in the following lemma. 
\begin{lemma}\label{lem:amxboundhoelder}
For $Z, Z_1, \ldots, Z_n$ i.i.d.~stochastic processes on $[0,1]^d$ with $Z \in \Pclass$ we have the bound
\begin{equation}\label{eq:boundsup1}
	\expec\Big[\sup_{\bs x \in [0,1]^d}\, \frac1{\sqrt n} \, \abs{ \sum_{i=1}^n\,Z_i(\bs{x}) }\Big] \leq \big(2\, \tilde C_2\, C_Z\, \Lambda_{1/\beta_0} \big)^{1/2}
\end{equation}
where $\Lambda_{1/\beta_0}$ is given in \eqref{eq:theconstant}, and $\tilde C_2$ in \eqref{eq:maxboundpollard}. 
\end{lemma}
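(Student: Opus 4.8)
The plan is to invoke Pollard's maximal inequality (Lemma \ref{lem:maximalInequalityExpection}) for the centered array $X_{n,i} \defeq Z_i$, $1 \leq i \leq n$, after checking manageability via Lemma \ref{lem:manageability}. First I would fix an envelope. Since $\normi{\bs x} \leq 1$ on $[0,1]^d$, the Hölder bound \eqref{eq:hoeldercontpathsZ} gives $|Z_i(\bs x)| \leq |Z_i(\bs 0)| + M_{Z_i} \normi{\bs x}^\beta \leq |Z_i(\bs 0)| + M_{Z_i}$ almost surely for every $\bs x$, so $\Phi_{n,i} \defeq |Z_i(\bs 0)| + M_{Z_i}$ defines a sequence of envelopes in the sense of Section \ref{sec:maxinequa}. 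By the elementary inequality $(a+b)^2 \leq 2a^2 + 2b^2$ and the definition \eqref{eq:classprocesses} of $\Pclass$, $\expec[\Phi_{n,i}^2] \leq 2\big(\expec[Z_i(\bs 0)^2] + \expec[M_{Z_i}^2]\big) \leq 2C_Z$, hence by the i.i.d.\ structure $\expec[\normz{\bs\Phi_n}^2] = \sum_{i=1}^n \expec[\Phi_{n,i}^2] \leq 2nC_Z$, uniformly over $Z \in \Pclass$.

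Next I would verify the modulus-of-continuity condition \eqref{eq:measucondprocess} of Lemma \ref{lem:manageability}. Since $\normi{\cdot} \leq \normz{\cdot}$, \eqref{eq:hoeldercontpathsZ} yields $|Z_i(\bs x) - Z_i(\bs x')| \leq M_{Z_i}\normz{\bs x - \bs x'}^\beta \leq \Phi_{n,i}\normz{\bs x - \bs x'}^\beta$. The only subtlety is that $\beta$ is not fixed across the class but only known to lie in $[\beta_0,1]$; however, for $\epsilon \in (0,1]$ and $\beta \geq \beta_0$ one has $\epsilon^{1/\beta_0} \leq \epsilon^{1/\beta}$, so $\normz{\bs x - \bs x'} \leq \epsilon^{1/\beta_0}$ forces $\normz{\bs x-\bs x'}^\beta \leq \epsilon$ and therefore $|Z_i(\bs x) - Z_i(\bs x')| \leq \epsilon\,\Phi_{n,i}$. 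Thus \eqref{eq:measucondprocess} holds with $K_1 = K_2 = 1$ and $\kappa = 1/\beta_0$, \emph{for every} $Z \in \Pclass$, and Lemma \ref{lem:manageability} shows the array is manageable with capacity bound $\lambda_{1/\beta_0}$ and $\Lambda_{1/\beta_0} = \int_0^1 \sqrt{\log \lambda_{1/\beta_0}(\e)}\,\dx\e < \infty$ as in \eqref{eq:theconstant}.

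Finally, since the $Z_i$ are centered, $M_n(\bs x) = \expec[S_n(\bs x)] = 0$, so applying Lemma \ref{lem:maximalInequalityExpection} with $p = 2$ gives
\[
\expec\big[\normi{S_n}^2\big] \leq \tilde C_2\,\Lambda_{1/\beta_0}^2\,\expec\big[\normz{\bs\Phi_n}^2\big] \leq 2\,\tilde C_2\,C_Z\,\Lambda_{1/\beta_0}^2\, n.
\]
Dividing by $n$ and using Cauchy--Schwarz (Jensen) then yields
\[
\expec\Big[\sup_{\bs x \in [0,1]^d} \tfrac1{\sqrt n}\Big|\sum_{i=1}^n Z_i(\bs x)\Big|\Big] \leq \Big(\tfrac1n\expec\big[\normi{S_n}^2\big]\Big)^{1/2} \leq \big(2\,\tilde C_2\,C_Z\big)^{1/2}\,\Lambda_{1/\beta_0},
\]
which is the asserted bound \eqref{eq:boundsup1}; all estimates are uniform over $Z \in \Pclass$, so \eqref{eq:boundsup} follows as well. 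The work is essentially bookkeeping once Lemmas \ref{lem:manageability} and \ref{lem:maximalInequalityExpection} are in hand; the one point genuinely requiring care is ensuring both the envelope moment bound and the capacity bound can be taken \emph{uniformly} over $\Pclass$, i.e.\ handling the range $[\beta_0,1]$ of admissible Hölder exponents through the single exponent $\kappa = 1/\beta_0$.
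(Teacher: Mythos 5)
Your proof is correct and follows essentially the same route as the paper: manageability of the (rescaled) processes via Lemma \ref{lem:manageability} with envelope $|Z_i(\bs 0)|+M_i$, constants $K_1=K_2=1$ and $\kappa=1/\beta_0$ taken uniformly over $\Pclass$, followed by Pollard's maximal inequality with $p=2$ and Jensen. The one discrepancy is cosmetic: your bookkeeping yields the constant $(2\,\tilde C_2\, C_Z)^{1/2}\,\Lambda_{1/\beta_0}$ rather than the displayed $(2\,\tilde C_2\, C_Z\,\Lambda_{1/\beta_0})^{1/2}$, and your version is the one actually consistent with \eqref{eq:maxboundpollard} for $p=2$ (the paper's statement appears to misplace $\Lambda_{1/\beta_0}$ inside the square root), which is immaterial since only finiteness of the bound is used.
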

\begin{proof}
	From the Hölder continuity of the paths \eqref{eq:hoeldercontpathsZ}, Lemma \ref{lem:manageability} implies manageability of each process $n^{-1/2} Z_i$  w.r.t.~to the envelope $n^{-1/2} (M_i + \abs{Z_i(\bs 0)}), i = 1,\ldots,n$, where we may take $K_1 = K_2 = 1$ and $\kappa = 1/\beta_0$ uniformly over $\Pclass$. The bound follows from Jensen`s inequality and \eqref{eq:maxboundpollard} with $p=2$. 
\end{proof}

\subsection{Proof of Theorem \ref{cor:asymptotic:normality} (asymptotic normality)} \label{sec:proof:theorem:asympnorm}

\begin{proof}[Proof of Theorem \ref{cor:asymptotic:normality}]
	
	In view of the error decomposition \eqref{eq:decomposition}, Lemma \ref{theorem:estimation:rates} together with the assumptions on $\bs p$ and $n$ and the choice of the smoothing parameter $h$ imply
	\begin{align*}
		\sup_{\mu \in \hclass } \sqrt n \, \normb{I_1^{\bs p,h}}_\infty \to  0 \quad \text{and} \quad \sqrt n \, \normb{I_2^{n,\bs p, h}}_{\infty} \stackrel{\Prob}{\to}  0 \,.
	\end{align*}
	We shall apply the functional central limit Theorem in \citet[Theorem (10.6)]{pollard1990empirical}	to the triangular array of processes $(X_{ni})_{n \in \N, 1 \leq i \leq n}$ given by
\begin{equation*}
	X_{ni}(\bs x)= \frac1{\sqrt n} 	\sum_{\bs j = \bs 1}^{\bs p} \wjb xh \, Z_i(\bs x_{\bs j})\quad  \text{and}\quad  S_{n}(\bs x) \defeq \sum_{i=1}^n X_{ni}(\bs x),  \qquad \bs x \in [0,1]^d \,,
\end{equation*}
to obtain 
	\begin{align}\label{eq:clthelp}
		\sqrt n \, I_3^{n,\bs p,h}  = S_n(\bs x) \stackrel{D}{\longrightarrow} \ \GG(\bs 0,c) \, .
	\end{align}
	With Slutsky's Theorem \citep[Example 1.4.7]{van1996weak} we then obtain the assertion of Theorem \ref{cor:asymptotic:normality}. 

	For \eqref{eq:clthelp} we have to check the conditions i) - v) of \citet[Theorem (10.6)]{pollard1990empirical}.
	
	i). We show manageability of the processes $X_{n} = (X_{n1},\ldots,X_{nn})$ with respect to $\Phi_{n}\defeq(\Phi_{n1},\ldots,\Phi_{nn})$, with
	$$ \Phi_{ni} \defeq \frac{\Csum}{\sqrt n} \big( \abs{Z_i(\bs 0)} + M_i\big)\,,$$
	where the random variables $M_i$ are given \eqref{eq:hoeldercontpathsZ}.
	We apply Lemma \ref{lem:manageability} and distinguish the cases $\epsilon > h^\beta$ and $\epsilon \leq h^\beta$. For $\epsilon > h^\beta$, using \ref{ass:weights:polynom} we get for $\norm{\bs x - \bs y}_2 \leq \epsilon^{1/\beta}$ that
	\begin{align*}
		\sqrt n \absb{X_{ni}(\bs x)  - X_{ni}(\bs y)} & \leq \abss{\sum_{\bs j=\bs1}^{\bs p} \wjb xh\big( Z_i(\bs{x_j}) - Z_i(\bs x)\big)} + \absb{ Z_i(\bs x) - Z_i(\bs y)} \\ & \Quad + \abss{\sum_{\bs j = \bs 1}^{\bs p} \wjb yh \big(Z_i(\bs y) - Z_i(\bs{x_j})}.
	\end{align*}
	The second term is bounded by 
	\begin{align*}
		\absb{Z_i(\bs x) - Z_i(\bs y)} & \leq M_i  \norm{\bs x- \bs y}_\infty^\beta \leq M_i  \norm{\bs x- \bs y}_2^\beta\leq M_i \epsilon\,
	\end{align*}
	and the first and third terms by
	\begin{align*}
		\abss{\sum_{\bs j = \bs 1}^{\bs p} \wjb xh \big( Z_i(\bs{x_j}) - Z_i(\bs x)\big)} & \leq \Csum\absb{Z_i(\bs{x_j}) - Z_i(\bs x)} \ind_{\norm{\bs x- \bs{x_j}}_\infty \leq h}\\
		& \leq \Csum M_ih^\beta \leq \Csum M_i \epsilon\,.
	\end{align*}
	Thus \eqref{eq:measucondprocess} in Lemma \ref{lem:manageability} follows with $K_1 = 1, K_2 = 3$ and $\kappa = 1/\beta$. For the second case $\epsilon \leq h^\beta$ let $\norm{\bs x - \bs y}_2 \leq (\Clip \Ccard )^{-1}\epsilon^{1+1/\beta}$. Then 
	\begin{align*}
		\absb{X_{ni}(\bs x) - X_{ni}(\bs y)} & \leq \Phi_{ni} \sum_{\bs j = \bs 1}^{\bs p} \absb{\wjb xh - \wjb yh} \\
		& \leq \Phi_{ni} \Clip \, \Ccard  \bigg( \frac{\norm{\bs x - \bs y}}h \wedge 1\bigg) \leq \Phi_{ni} \,  \Clip \, \Ccard \,\epsilon. 
	\end{align*}
	In this case \eqref{eq:measucondprocess} in Lemma \ref{lem:manageability} follows for $K_1 = (H\, \Clip \, \Ccard )^{-1}, K_2 = 1$ and $\kappa = 1+ 1/\beta$. Summarizing we may bound the capacity number by 
	\[ N(\epsilon\, \norm{\alpha \circ \Phi_n}, \alpha \circ F_n) \leq \bigg( \frac{\max(3^{1/\beta}, \Clip \Ccard )}{2}  \, \epsilon^{-(1+\nicefrac1\beta)} + 2\bigg)^d  \defeql \lambda_\beta^d(\epsilon) \]
	which is integrable, see Lemma \ref{lem:manageability}. 
	%
	
	For ii) we calculate $\lim_{n\to \infty}\expec[S_n(\bs x)S_n(\bs x^\prime)]$.  It holds
	\begin{align*}
		\expec[S_n(\bs x)S_n(\bs x^\prime)] & = \sum_{\bs j,\bs k =\bs 1}^{\bs p} \wjb xh \wjb{x^\prime}h \Gamma(\bs{x_j}, \bs{x_k}) \stackrel{n \to \infty}{\longrightarrow} \Gamma(\bs x, \bs x^\prime), 
	\end{align*}
	uniformly for all $\bs x, \bs x^\prime \in [0,1]^d$ since $c \in \mc H_{[0,1]^{2d}}(\gamma, \tilde L)$ as stated in Assumption \ref{ass:model:2}.
	
	For iii) we make use of the fact that the processes $Z_i$ have finite second moment
	\[ \sum_{i=1}^n \expec[\phi_{n,i}^2] \leq 2\,\Csum^2\, \expec[Z_1^2(\bs 0) + M_1^2] < \infty\,.\]
	For iv) it holds for all fixed $\epsilon > 0 $ that 
	$$ \sum_{i=1}^n \expec[\Phi_{n,i}\ind_{\{\Phi_{n,i} > \epsilon\}}] \leq 2\,\Csum\,\expec[(Z_1^2(\bs 0) + M_1^2) \ind_{\{\Phi_{n,i} > \epsilon\}}] \stackrel{n \to \infty}\longrightarrow 0$$
	by the monotone convergence Theorem. 
	
	For v) we calculate 
	\begin{align*}
		\rho_n^2(\bs x, \bs x^\prime)  
		& = \sum_{\bs j, \bs k = \bs 1}^{\bs p} \wjb xh \wkb xh \Gamma(\bs{x_j}, \bs{x_k}) -  2\sum_{\bs j, \bs k = \bs 1}^{\bs p} \wjb{x^\prime}h \wkb xh \Gamma(\bs{x_j}, \bs{x_k}) \\
		& \Quad + \sum_{\bs j, \bs k = \bs 1}^{\bs p} \wjb{x^\prime}h \wkb{x^\prime}h \Gamma(\bs{x_j}, \bs{x_k})  \stackrel{n \to \infty}{\longrightarrow} \Gamma(\bs x,\bs x) -  \Gamma(\bs x,\bs x^\prime) + \Gamma(\bs x^\prime, \bs x^\prime)\,
	\end{align*}
	uniformly for all $\bs x, \bs x^\prime \in [0,1]^d$. Further let $(\bs x_n)_{n \in \N}, (\bs y_n)_{n \in \N} \subset [0,1]^d$ be two deterministic sequences that suffice $\rho(\bs x_n, \bs y_n) \to 0, n\to \infty$. Then it holds
	\begin{align*}
		0 \leq \rho_n(\bs x_n, \bs y_n) & \leq \sup_{\bs x, \bs x^\prime \in [0,1]^d} \absb{ \rho_n(\bs x, \bs x^\prime) - \rho(\bs x, \bs x^\prime)}+ \rho(\bs x_n, \bs y_n) \stackrel{n \to \infty}\longrightarrow 0\,.
	\end{align*}

	Thus the functional central limit Theorem is applicable and yields the stated result. 
%
\end{proof}

\appendix

\section{Proof of Lemma \ref{lemma:locpol:weights}} \label{sec:proof:lemma:locpol:weights}


For the proof of Lemma \ref{lemma:locpol:weights} we start with an auxiliary Lemma concerning the approximate localisation of the design points on each axis and their approximate distance.

\begin{lemma} \label{lemma:design:points:auxiliary}
	Let Assumption \ref{ass:designdensity} be satisfied. Then for all $j = 1,\dotsc,p_k$ and $1 \leq k \leq d$ it follows that
	\begin{equation}
		\frac{j-0.5}{\fkmax  \, p_k} \leq x_{k,j} \leq \frac{{j}-0.5}{\fkmin  \, p_k} \quad \text{and} \quad 1-\frac{p_k-{j}+0.5}{\fkmin  \, p_k} \leq x_{k,j} \leq 1-\frac{p_k-{j}+0.5}{\fkmax  \, p_k} \, ,\label{lemma:design:points:auxiliary:1}
	\end{equation}
	and for all $1 \leq j < l \leq p_k$ that
	\begin{equation}
		\frac{l-j}{\fkmax  \, p_k} \leq x_{k,l}-x_{k,j} \leq \frac{l-j}{\fkmin  \, p_k} \,. \label{lemma:design:points:auxiliary:2}
	\end{equation}
\end{lemma}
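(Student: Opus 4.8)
The plan is to work throughout with the cumulative distribution function $F_k(x) \defeq \int_0^x f_k(t)\,\dx t$, $x \in [0,1]$, for a fixed $1 \le k \le d$. Since $f_k$ is continuous and bounded below by $\fkmin > 0$, the map $F_k$ is a strictly increasing continuous bijection of $[0,1]$ onto $[0,1]$ with $F_k(1) = 1$; in particular Assumption \ref{ass:designdensity} determines the design points $x_{k,j}$ uniquely via $F_k(x_{k,j}) = (j-0.5)/p_k$, and the ordering $x_{k,1} < \dots < x_{k,p_k}$ follows from monotonicity of $F_k$. (The Lipschitz continuity of $f_k$ plays no role in this lemma; it is only needed later for the weight estimates of Lemma \ref{lemma:locpol:weights}.) The single elementary ingredient is that the density bounds $\fkmin \le f_k(t) \le \fkmax$ give, for any $0 \le a \le b \le 1$,
$$\fkmin\,(b-a) \;\le\; \int_a^b f_k(t)\,\dx t \;\le\; \fkmax\,(b-a).$$

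First I would prove \eqref{lemma:design:points:auxiliary:1}. Applying the displayed estimate with $a = 0$, $b = x_{k,j}$ and using $F_k(x_{k,j}) = (j-0.5)/p_k$ gives $\fkmin\, x_{k,j} \le (j-0.5)/p_k \le \fkmax\, x_{k,j}$, which upon division yields the first chain of inequalities. For the second chain, apply the same estimate with $a = x_{k,j}$, $b = 1$ and use $\int_{x_{k,j}}^1 f_k(t)\,\dx t = 1 - (j-0.5)/p_k = (p_k - j + 0.5)/p_k$, obtaining $\fkmin(1 - x_{k,j}) \le (p_k - j + 0.5)/p_k \le \fkmax(1 - x_{k,j})$; dividing and subtracting from $1$ gives the claim.

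For \eqref{lemma:design:points:auxiliary:2}, with $1 \le j < l \le p_k$ I would apply the two-sided integral bound with $a = x_{k,j}$, $b = x_{k,l}$ (legitimate since $x_{k,j} < x_{k,l}$ by the monotonicity noted above) and use $F_k(x_{k,l}) - F_k(x_{k,j}) = (l-0.5)/p_k - (j-0.5)/p_k = (l-j)/p_k$ to get $\fkmin(x_{k,l} - x_{k,j}) \le (l-j)/p_k \le \fkmax(x_{k,l} - x_{k,j})$, which rearranges to the stated bound. There is no genuine obstacle here; the only points deserving a sentence of care are the well-definedness and strict ordering of the $x_{k,j}$ (handled via bijectivity of $F_k$) and the normalization $\int_0^1 f_k = 1$ used in the second part of \eqref{lemma:design:points:auxiliary:1}.
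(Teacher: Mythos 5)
Your proof is correct and follows essentially the same route as the paper: sandwich $\int_a^b f_k$ between $\fkmin(b-a)$ and $\fkmax(b-a)$ and apply it with $(a,b)$ equal to $(0,x_{k,j})$, $(x_{k,j},1)$ and $(x_{k,j},x_{k,l})$ respectively, using the defining relation $\int_0^{x_{k,j}}f_k=(j-0.5)/p_k$ and the normalization $\int_0^1 f_k=1$. Your extra remarks on the well-definedness and strict ordering of the design points are a harmless (and slightly more careful) addition to what the paper writes.
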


\begin{proof}[Proof of Lemma \ref{lemma:design:points:auxiliary}]
	The first assertion follows by the equivalence 
	\begin{align*}
		\int_0^{x_{k,j}} \fkmin  \, \dx t \leq \int_0^{x_{k,j}} f_k(t) \, \dx t \leq \int_0^{x_{k,j}} \fkmax \, \dx t \quad &\Leftrightarrow \quad x_{k,j} \, \fkmin  \leq \frac{j-0.5}{p_k} \leq x_{k,j} \, \fkmax  \,.
	\end{align*}
	Similar equivalences and transformations with
	\begin{align*}
	\int_{x_{k,j}}^1 \fkmin  \, \dx t \leq \int_{x_{k,j}}^1 f_k(t) \, \dx t = 1- \int_0^{x_{k,j}} f_k(t) \, \dx t \leq \int_{x_{k,j}}^1 \fkmax  \,\dx t
	\end{align*}
	and 
	\begin{align*}
		\int_{x_{k,j}}^{x_{k,l}} \fkmin  \, \dx t \leq \int_{x_{k,j}}^{x_{k,l}} f_k(t) \, \dx t = \int_{0}^{x_{k,l}} f_k(t) \, \dx t - \int_0^{x_{k,j}} f_k(t) \, \dx t \leq \int_{x_{k,j}}^{x_{k,l}} \fkmax  \, \dx t
	\end{align*}
	show the other two assertations.
\end{proof}

\begin{lemma} \label{lemma:design:points}
	Suppose that Assumption \ref{ass:designdensity} is satisfied. 
 Then we obtain for all subsets defined by intervals $I_1,\ldots, I_d \subseteq [0,1]$ via $I=I_1 \times \ldots \times I_d \subseteq [0,1]^d$ the estimate
	\[ \sum_{\bs 1 \leq \bs j \leq \bs p} \one_{\{\bs x_{\bs j} \in I\}}\leq 2^d\bs \fmax ^{\bs 1} \prod_{k=1}^{d}\max\big( p_k\,\nu(I_k), 1 \big) \,, \]
	where $\nu(I_k)$ is the Lebesgue measure of $I_k$. In particular this means that Assumption \ref{ass:design1} is satisfied.
\end{lemma}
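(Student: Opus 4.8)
The plan is to exploit the Cartesian product structure of the design: the $d$-dimensional count factorises into a product of one-dimensional counts, and each one-dimensional count is controlled by the lower bound on the spacing of consecutive design points from Lemma \ref{lemma:design:points:auxiliary}. Concretely, since $\bs x_{\bs j}=(x_{1,j_1},\dots,x_{d,j_d})$, the event $\{\bs x_{\bs j}\in I\}$ holds iff $x_{k,j_k}\in I_k$ for every $k$, so $\one_{\{\bs x_{\bs j}\in I\}}=\prod_{k=1}^d\one_{\{x_{k,j_k}\in I_k\}}$ and
\[ \sum_{\bs 1\le\bs j\le\bs p}\one_{\{\bs x_{\bs j}\in I\}}=\prod_{k=1}^d\Big(\sum_{j=1}^{p_k}\one_{\{x_{k,j}\in I_k\}}\Big). \]
It thus suffices to show, for each fixed $k$, that the number $m_k$ of indices $j$ with $x_{k,j}\in I_k$ satisfies $m_k\le 2\,\fkmax\,\max(p_k\,\nu(I_k),1)$; taking the product of these $d$ bounds and using $\prod_{k=1}^d\fkmax=\fmax^{\bs 1}$ yields the asserted estimate.

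For the one-dimensional count, if $m_k\ge 2$ let $j<l$ be the smallest and largest indices with $x_{k,j},x_{k,l}\in I_k$. Then $l-j\ge m_k-1$, and \eqref{lemma:design:points:auxiliary:2} gives $\nu(I_k)\ge x_{k,l}-x_{k,j}\ge (m_k-1)/(\fkmax\,p_k)$, i.e.\ $m_k\le 1+\fkmax\,p_k\,\nu(I_k)$, which is trivially also true for $m_k\le 1$. Since $f_k$ integrates to one on $[0,1]$ and is bounded by $\fkmax$, we have $\fkmax\ge 1$; a short case distinction according to whether $p_k\,\nu(I_k)\ge 1$ or $p_k\,\nu(I_k)<1$ then converts $1+\fkmax\,p_k\,\nu(I_k)$ into $2\,\fkmax\,\max(p_k\,\nu(I_k),1)$.

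Finally, the ``in particular'' assertion follows by specialising $I$ to the cube with $I_k=[x_k-h,x_k+h]\cap[0,1]$, for which $\nu(I_k)\le 2h$; the count is then at most $2^d\fmax^{\bs 1}\prod_{k=1}^d\max(2p_kh,1)$, and in the admissible bandwidth range $h\in(c/\bs p_{\min},h_0]$ each $p_kh$ is bounded below by a positive constant, so $\max(2p_kh,1)\le C\,p_kh$ and the product is $\le\Ccard\,\bs p^{\bs 1}h^d$ for a suitable $\Ccard$.

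I do not expect a real obstacle here: once the spacing bound of Lemma \ref{lemma:design:points:auxiliary} is available, everything reduces to elementary counting. The only points that require a little care are the case split in the second step together with the observation $\fkmax\ge 1$, and, for the reduction to Assumption \ref{ass:design1}, restricting to the bandwidth regime in which $\max(p_k\,\nu(I_k),1)$ and $p_k\,\nu(I_k)$ coincide up to constants.
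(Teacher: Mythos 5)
Your proposal is correct and follows essentially the same route as the paper: factorize the count over coordinates using the Cartesian product structure, then control each one-dimensional count via the spacing lower bound \eqref{lemma:design:points:auxiliary:2} from Lemma \ref{lemma:design:points:auxiliary}, and finally specialise to a cube of side $2h$ for Assumption \ref{ass:design1}. The only (cosmetic) difference is that you bound $m_k\le 1+\fkmax p_k\nu(I_k)$ directly via the extremal indices, whereas the paper argues by contradiction with ceiling functions; your explicit restriction to $h\in(c/\bs p_{\min},h_0]$ in the last step is if anything slightly more careful than the paper's.
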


\begin{proof}[Proof of Lemma \ref{lemma:design:points}]
	Without loss of generality we set $I=[a_1,b_1]\times \cdots \times [a_d,b_d]\subseteq[0,1]^d$. We distinguish two cases with respect to the length of the $k$-interval $[a_k,b_k]$. Let $x_{k,1},\ldots, x_{k,p_k} \in [0,1]$ be the grid points on $k^\text{th}$ axis. If $b_k-a_k<1/p_k$, we can bound the number of design points in the interval $[a_k,b_k]$ by $\ceil{\fkmax} =  \min\{z\in \N \mid z>\fkmax\}$. Otherwise, if we assume that $x_{k,j}$ is the smallest point larger than or equal to $a_k$, the lower bound in \eqref{lemma:design:points:auxiliary:2} would yield to the contradiction 
	\begin{equation*}
		x_{k,j+\ceil{\fkmax }} - x_{k,j}\geq \frac{\ceil{\fkmax}}{\fkmax \, p_k} > \frac1{p_k} \,.
	\end{equation*}
	Hence,
	\begin{equation*}
		\sum_{j=1}^{p_k} \one_{\{x_{k,j} \in [a_k,b_k]\}} \leq \ceil{\fkmax} \leq \frac{2 \fkmax}{p_k} = 2 \fkmax \max\big( p_k\,(b_k-a_k),1\big)
	\end{equation*}
	follows. Conversely, if $b_k-a_k \geq 1/p_k$, the number of points in die corresponding interval is bounded by $\ceil{(b_k-a_k)\fkmax p_k }$ since otherwise we would obtain
	\begin{equation*}
		x_{k,j+\ceil{(b_k-a_k)\fkmax p_k}}-x_{k,j} \geq \frac{\ceil{(b_k-a_k)\fkmax p_k} }{\fkmax \, p_k} >b_k-a_k
	\end{equation*}
	just like before. Again this leads to 
	\begin{equation*}
	    \sum_{j=1}^{p_k} \one_{\{x_{k,j} \in [a_k,b_k]\}} \leq \ceil{(b_k-a_k) \fkmax\,p_k} \leq \fkmax(b_k-a_k)+1 \leq 2 \fkmax \max\big( p_k\,(b_k-a_k),1 \big) \, .
	\end{equation*}
	Together with \(
		\sum_{\bs j=\bs 1}^{\bs p} \one_{\{\bs x_{\bs j} \in I\}}=\prod_{k=1}^d \sum_{j=1}^{p_k} \one_{x_{k,j} \in [a_k,b_k]}\)
	the two cases yield the first claim.
 
     Assumption \ref{ass:design1} follows with $C_4=4^d\bs{f_{\max}^1}$ by
     \begin{align*}
         \sum_{\bs j = \bs 1}^{\bs p} \ind_{\bs{x_j} \in [\bs x -(j,\ldots,h), \bs x+ (h,\ldots,h)]} & \leq 2^d \bs{f_{\max}^1} \,\prod_{k=1}^d \max\big(2\,h\,p_k, 1\big) = 4^d\bs{f_{\max}^1}\,h^d\,\bs{p^1}\,.
     \end{align*}
	
\end{proof}


\textbf{Construction of the weights of the linear estimator.}

\vspace{2mm}

In order to derive the form of the weights $\wjb xh$ of the linear estimator $\mulp$ in \eqref{eq:locpolest} we set
\begin{equation}
	B_{\bs p,h}(x) \defeq \frac1{\bs{p^1}h^d}\sum_{\bs j=\bs 1}^{\bs p} U_h(\bs{x_j}-\bs x)  \, U_h^\top(\bs x_{\bs j}-\bs x) \, K_{h}(\bs x_{\bs j}-\bs x)  \quad \in\R^{N_{m,d}\times N_{m,d}} \label{eq:Bp}
\end{equation}
and
\begin{equation*}
	\bs a_{\bs p,h}(\bs x)\defeq \frac1{\bs{p^1}h^d} \sum_{\bs j=\bs 1}^{\bs p} U_h(\bs x_{\bs j}-\bs x) \, K_{h}(\bs x_{\bs j}-\bs x) \, \oY{\bs j} \quad \in \R^{N_{m,d}} 
\end{equation*}
for $\bs x \in T$, then $\lokpol (\bs x)$ as in \eqref{eq:locpol} is the solution to the weighted least squares problem
\begin{equation*}
	\lokpol(\bs x) =\argmin_{\bs \vartheta \in \R^{N_{m,d}} }\Big( -2\, \bs \vartheta^\top \bs a_{\bs p,h}(\bs x)+\bs \vartheta^\top B_{\bs p,h}(\bs x) \, \bs \vartheta \Big) \,.
\end{equation*}
The solution is determined by the normal equations
\begin{equation*}
	B_{\bs p,h}(\bs x) \, \lokpol(\bs x) =\bs a_{\bs p,h}(\bs x)\,,
\end{equation*}
where the matrix $B_{\bs p,h}(\bs x) $ is positive semidefinite. In particular, if $B_{\bs p,h}(\bs x)$ is positive definite, the solution in \eqref{eq:locpol} is uniquely determined and we obtain
\begin{equation*}
	\mulp(\bs x) =  \sum_{\bs j=\bs 1}^{\bs p} \wjb xh \, \oY{\bs j}
\end{equation*}
with
\begin{align} \label{eq:weights:locpol}
	\wjb xh \defeq \frac1{\bs p^{\bs 1}h^d} U_m^\top(\bs 0) \, B_{\bs p,h}^{-1}(\bs x) \, U_h(\bs{x_j}-\bs x)\,  K_h(\bs x_{\bs j}-\bs x)\,,
\end{align}
so that the local polynomial estimator is a linear estimator. In the following Lemma we show a result that implicates that $B_{\bs p, h}(\bs x)$ is positive definite.

\begin{lemma}\label{lemma:proof:(LP1)}
	Suppose that the kernel $K$ satisfies \eqref{eq:kernel} in Lemma \ref{lemma:locpol:weights}. Then there exist a sufficiently large $p_0 \in \N$ and a sufficiently small  $h_0 \in \R_+$ such that for all $\bs p$ with $\bs p_{\min} \geq p_0$ and $h\in(c/\bs p_{\min},h_0]$, where $c \in \R_+$ is a sufficiently large constant, the smallest eigenvalues $\lambda_{\min} \big(B_{\bs p, h}(\bs x)\big)$ of the matrices $B_{\bs p, h}(\bs x)$, which are given in \eqref{eq:Bp}, are bounded below by a universal positive constant $\lambda_0 >0$ for any $\bs x \in [0,1]^d$. 
\end{lemma}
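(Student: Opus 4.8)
The plan is to show that $B_{\bs p,h}(\bs x)$ is, up to a vanishing error, a Riemann-sum approximation of a fixed positive-definite ``moment matrix'' of the kernel, and then to bound the approximation error uniformly in $\bs x$ and $h$ in the admissible range. First I would fix notation: write $B_{\bs p,h}(\bs x) = \tfrac1{\bs p^{\bs 1}h^d}\sum_{\bs j} U_m\big((\bs x_{\bs j}-\bs x)/h\big)U_m^\top\big((\bs x_{\bs j}-\bs x)/h\big)K\big((\bs x_{\bs j}-\bs x)/h\big)$, and introduce the limiting matrix
\begin{equation*}
	\bar B(\bs x; h) \defeq \frac1{h^d}\int_{[0,1]^d} U_m\Big(\frac{\bs u-\bs x}{h}\Big)U_m^\top\Big(\frac{\bs u-\bs x}{h}\Big)K\Big(\frac{\bs u-\bs x}{h}\Big)\,\bs f(\bs u)\,\dx\bs u,
\end{equation*}
where $\bs f(\bs u)=\prod_k f_k(u_k)$ is the product design density from Assumption \ref{ass:designdensity}.

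The argument then has three steps. Step one: lower-bound $\bar B(\bs x;h)$ away from zero uniformly. After substituting $\bs v = (\bs u-\bs x)/h$, the integral becomes $\int_{D_{\bs x,h}} U_m(\bs v)U_m^\top(\bs v)K(\bs v)\bs f(\bs x+h\bs v)\,\dx\bs v$ over a region $D_{\bs x,h}\subseteq[-1,1]^d$ that, because $h$ is small, always contains a fixed cube $[-\Delta',\Delta']^d$ (for $\bs x$ near the boundary of $[0,1]^d$ one uses that $D_{\bs x,h}$ still contains a cube of fixed half-width, after possibly shrinking $h_0$ and $\Delta'$). On this cube $K\geq K_{\min}$ by \eqref{eq:kernel} and $\bs f\geq f_{\min}^d>0$, so for any unit vector $\bs\theta\in\R^{N_{m,d}}$,
\begin{equation*}
	\bs\theta^\top \bar B(\bs x;h)\bs\theta \;\geq\; K_{\min}\,f_{\min}^d\int_{[-\Delta',\Delta']^d}\big(\bs\theta^\top U_m(\bs v)\big)^2\,\dx\bs v \;\geq\; 2\lambda_0 >0,
\end{equation*}
the last bound because $\bs v\mapsto \bs\theta^\top U_m(\bs v)$ is a nonzero polynomial (the components of $U_m$ are linearly independent monomials), hence its square has strictly positive integral over any cube, and the infimum over the compact sphere of a continuous positive function is positive. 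Step two: control $\|B_{\bs p,h}(\bs x)-\bar B(\bs x;h)\|$. Entrywise, each entry of $B_{\bs p,h}(\bs x)$ is a Riemann sum for the corresponding entry of $\bar B$; using Lemma \ref{lemma:design:points:auxiliary} to bound grid spacings by $C/p_k$ and the Lipschitz continuity of $K$ together with smoothness of the monomials $U_m$ and of the $f_k$, one gets a bound of order $\sum_k 1/(p_k h) \lesssim 1/(\bs p_{\min} h)$ for the approximation error of each entry, uniformly in $\bs x$; since $h>c/\bs p_{\min}$, this is $\lesssim 1/c$, which can be made $\leq\lambda_0$ by choosing $c$ large (and $p_0$ large, $h_0$ small). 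Step three: combine via Weyl's inequality, $\lambda_{\min}(B_{\bs p,h}(\bs x))\geq \lambda_{\min}(\bar B(\bs x;h)) - \|B_{\bs p,h}(\bs x)-\bar B(\bs x;h)\| \geq 2\lambda_0-\lambda_0 = \lambda_0$, uniformly in $\bs x\in[0,1]^d$ and $h\in(c/\bs p_{\min},h_0]$.

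The main obstacle I expect is making the Riemann-sum error bound genuinely uniform in the regime $h\simeq 1/\bs p_{\min}$, which is precisely the nonstandard range flagged before Assumption \ref{ass:weights}: here only a bounded number of design points fall in the kernel window, so the ``integral approximation'' heuristic is delicate and one must track constants carefully — it is the interplay $h > c/\bs p_{\min}$ with $c$ large that rescues it, and getting the order of quantifiers right ($c$ chosen after $\lambda_0$, then $p_0,h_0$) is the crux. A secondary technical point is the boundary behavior of $\bs x$: one must verify that for every $\bs x\in[0,1]^d$ the shifted support region contains a fixed cube on which $K$ is bounded below, which forces the choice of $\Delta'<\Delta$ and a possibly smaller $h_0$.
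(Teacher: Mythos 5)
Your proposal is correct and follows essentially the same route as the paper's proof: both lower-bound the quadratic form by the kernel's lower bound $K_{\min}$ on a fixed sub-cube (the paper uses $2^d$ quadrants $E^i$ to handle the boundary where you use a generic shifted cube), invoke positivity of $\int(\bs\theta^\top U_m)^2$ over that cube, and absorb a Riemann-sum discretization error of order $\sum_k 1/(p_k h)\lesssim d/c$ by choosing $c$ large. The only cosmetic difference is that you pass through a full limiting matrix and Weyl's inequality, whereas the paper works directly with a one-sided comparison of quadratic forms.
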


An immediate consequence of Lemma \ref{lemma:proof:(LP1)} is the invertibility of $B_{\bs p, h}(\bs x)$ for all $ \bs p$ with $\bs p_{\min} \geq \bs p_0$, $h\in(c/\bs p_{\min},h_0]$ and $\bs x \in [0,1]^d$, and hence also the uniqueness of the local polynomial estimator for these $\bs p$ and $h$. In \citet[Lemma 1.5]{tsybakov2008introduction} the lower bound for the smallest eigenvalues has only be shown for a fixed sequence $h_p$ (for $d=1$) of bandwidths which satisfies $h_p \to 0$ and $p \, h_p \to \infty$. In contrast, we allow an uniformly choice of $h$ which results, in particular, in the findings of Section \ref{sec:estimation:rates}.

\begin{proof}[Proof of Lemma \ref{lemma:proof:(LP1)}] 
	In the following let $\bs v \in \R^{N_{m,d}}$. We show that there exist a sufficiently large $p_0 \in \N$ and a component wise sufficiently small $h_0\in\R_+$ such that the estimate 
	\begin{equation}\label{eq:inf:Bp}
		\inf_{\bs p\geq\bs  p_0} \inf_{h\in(c/\bs p_{\min}, h_0]} \inf_{\bs x \in [0,1]^d} \inf_{\normz{\bs v}=1} \bs v^\top B_{\bs p, h}(\bs x) \,\bs  v \geq \lambda_0 
	\end{equation}
	is satisfied. Then we obtain for these $\bs p$ and $h$, and any $\bs x \in [0,1]^d$ also
	\begin{align*}
		\lambda_{\min}\big(B_{\bs p, h}(\bs x)\big) &=\bs e_{\min}^\top \big(B_{\bs p, h}(\bs x)\big) B_{\bs p, h}(\bs x) \, \bs e_{\min}\big(B_{\bs p, h}(\bs x)\big) \geq \inf_{\normz{\bs v}=1} \bs v^\top B_{\bs p, h}(\bs x) \,\bs v \geq \lambda_0\,,
	\end{align*} 
	where $\bs e_{\min}\big(B_{\bs p, h}(\bs x)\big) \in \R^{m+1}$ is a normalized eigenvector of $\lambda_{\min}\big(B_{\bs p, h}(\bs x)\big)$. Let $\varPsi\colon \{0,1\}^d \mapsto \{1,\ldots,2^d\}$ be an enumeration of the subsets of $\{0,1\}^d$ with $\varPsi(\bs 0)=1$ and $E^i\defeq J_{(\varPsi^{-1}(i))_1} \times \cdots \times J_{(\varPsi^{-1}(i))_d}$ for all $i=1,\ldots,2^d$, whereby $J_0\defeq [0,\Delta)$ and $J_1\defeq (-\Delta,0]$ with the constants $\Delta\in \R_+$ and $K_{\min}>0$ given in \eqref{eq:kernel}. We set
	\begin{equation*}
		\lambda_i(\bs{v}) \defeq \fmin^{\bs 1} \, K_{\min} \int_{E^i} \big(\bs{v}^\top U_m(\bs z)\big)^2\dx \bs z \,, \quad \quad \lambda_i \defeq \inf_{\normz{\bs{v}} =1} \lambda_i(\bs{v})
	\end{equation*}
	for all $i=1,\ldots,2^d$ and $\fmin^{\bs1} \defeq \prod _{k=1}^d \fkmin$ given in Assumption \ref{ass:designdensity}. Applying \citet[Lemma 1.4]{tsybakov2008introduction} with $K(\bs z) = \one_{E^i}(\bs z)$ leads to
	\begin{equation*}
	\lambda_i(\bs{v})\geq \lambda_i >0\,.
	\end{equation*}
	Therefore we find a $\lambda_0>0$ such that $\min(\lambda_1,\ldots,\lambda_{2^d})>\lambda_0>0$, e.g. $\lambda_0\defeq \min(\lambda_1,\ldots,\lambda_{2^d})/2$. Now we want to specify  a partition $S_1\,\cup \ldots \cup \,S_{2^d}=[0,1]^d$ and functions $A^{(1)}_{p,h}(\bs x,\bs{v}),\ldots ,A^{(2^d)}_{p,h}(\bs x,\bs{v})$ such that 
	\begin{align}\label{eq:vBv:A}
	\bs{v}^\top B_{\bs p, h}(\bs x) \, \bs{v} \geq A^{(i)}_{\bs p,h}(\bs x,\bs{v})\,,\quad\norm{\bs v}_2 = 1, \quad \bs x \in S_i \,,\quad i =1,\ldots,2^d \,,
	\end{align} 
	and
	\begin{align}\label{eq:glm:vBv}
		\sup_{\bs x \in S_i} \sup_{\normz{\bs{v}} =1} \big|A^{(i)}_{\bs p,h}(\bs x,\bs{v}) - \lambda_i(\bs{v})\big| \leq \sum_{k=1}^d\frac {c_1}{p_k\,h}\,,\quad i =1,\ldots,2 
	\end{align}
	hold true for a positive constant $c_1>0$. Setting $c\defeq d\,c_1/(\min(\lambda_1,\ldots,\lambda_{2^d})-\lambda_0)$ yields $\sum_{k=1}^d c_1/(p_k\,h) < d\,c_1/c \leq \lambda_i-\lambda_0$ for all $i =1,\ldots,2^d$, since $h \in (c/\bs{p}_{\min}, h_0]$. Hence, it follows by \eqref{eq:vBv:A} and \eqref{eq:glm:vBv} that
		\begin{align*}
		\inf_{\bs{x} \in S_i} \inf_{\normz{\bs{v}} =1} \bs{v}^\top B_{\bs p, h}(\bs{x}) \, \bs{v}&=\inf_{\bs{x} \in S_i} \inf_{\normz{\bs{v}} =1}\big(\lambda_i(\bs{v})+\bs{v}^\top B_{\bs p, h}(\bs x) \,\bs{v}-\lambda_i(\bs{v})\big)\\
		&\geq\lambda_i+\inf_{\bs{x} \in S_i} \inf_{\normz{\bs{v}} =1} \big(A^{(i)}_{p,h}(\bs{x},\bs{v})-\lambda_i(\bs{v})\big)\\
		&=\lambda_i-\sup_{\bs{x} \in S_i} \sup_{\normz{\bs{v}} =1} \big(\lambda_i(\bs{v})-A^{(i)}_{p,h}(\bs{x},\bs{v})\big)\\
		&\geq\lambda_i-\sup_{\bs{x} \in S_i} \sup_{\normz{\bs{v}} =1} \big|A^{(i)}_{p,h}(\bs{x},\bs{v})-\lambda_i(\bs{v})\big|\\
		&\geq\lambda_i-\sum_{k=1}^d\frac {c_1}{p_k\,h} \geq \lambda_0\,,
	\end{align*}
	which leads to \eqref{eq:inf:Bp} because of
	\begin{align*}
		\inf_{\bs{p}\geq \bs{p}_0} \inf_{h\in(c/\bs p_{\min}, h_0]}& \inf_{\bs{x} \in [0,1]^d} \inf_{\normz{\bs{v}} =1} \bs{v}^\top B_{\bs p, h}(\bs x) \, \bs{v} \\
		&= \inf_{\bs{p}\geq \bs{p}_0}\inf_{h\in(c/\bs p_{\min}, h_0]}\min_{i=1,\ldots,2^d} \Big(\inf_{\bs{x} \in S_i} \inf_{\normz{\bs{v}} =1} \bs{v}^\top B_{\bs p, h}(\bs x) \, \bs{v} \Big) \geq \lambda_0 \,.
	\end{align*}

	\medskip

    Now we show \eqref{eq:vBv:A} and \eqref{eq:glm:vBv}.
	Let $I_0^k \defeq \big[0,1-\Delta h - 1/(2\fkmin \,p_k) \big]$ and $I_1^k \defeq \big[1-\Delta h - 1/(2\fkmin \,p_k),1 \big]$. We define $S_i\defeq I^1_{(\varPsi^{-1}(i))_1} \times \ldots \times I^d_{(\varPsi^{-1}(i))_{d}}$. It is clear that we get $2^d$ different subsets and $\bigcup_{i=1}^{2^d}S_i=[0,1]^d$. We indicate the design points on the $k^{\text{th}}$ axis by $x_{k,1},\ldots,x_{k,p_k}$ and define $y_{k,j}\defeq (x_{k,j}-x_k)/h$, $x_k \in [0,1]$ for all $1 \leq j \leq p_k$ and $1\leq k \leq d$ and let $\bs y_{\bs j}\defeq (y_{1,j_1},\ldots,y_{d,j_d}) \in \R^d$. We have to differentiate in two different cases. First let $x_k \in I_0^k$
	\begin{align*}
	y_{k,1} & \leq \frac{1}{2\fkmin  \, p_k\,h} - \frac{x_k}{h} \leq \frac{1}{2\fkmin  \, p_k\,h} \,, \\
	y_{k, p_k}&\geq \frac{1-\frac{1}{2\fkmin  \, p_k}-x_k}{h} = \frac{1-x_k}{h}-\frac1{2\fkmin\,p_k\,h} \\
	&\geq \frac{1-\big(1-\Delta h - (2\fkmin \,p_k)^{-1}\big)}{h} - \frac 1{2\fkmin  \, p_k\,h} = \Delta
	\end{align*} 
	by \eqref{lemma:design:points:auxiliary:1} in Lemma \ref{lemma:design:points:auxiliary}. For appropriate $\bs p$ and $h\in (c/\bs p_{\min}, h_0]$ the quantity $(2\fkmin \,p_k\,h)^{-1}$ gets small if $c$ is chosen large enough. Consequently, the points $y_{k,1},\ldots,y_{k, p_k}$ form a grid which covers an interval containing $\big[1/(2\fkmin \,p_k\,h), \Delta \big]$. 
	Hence, there exist $1 \leq j_{k*} \defeq j_{k*}(x_k) <  j_k^{*}\defeq j_k^{*}(x_k) \leq p_k$ such that $y_{k, j_{k*}}\geq 0 \; \land \; (j_{k*}=1\;\lor\; y_{k, j_{k*}-1}<0) $ and $y_{k, j_k^*} \leq \Delta \; \land \;  y_{k, j_k^*+1}>\Delta.$ 
	Here $\land$ denotes the logical and, and $\lor$ the logical or. For the second case let $x_k\in I_1^k$. Then we obtain the estimates 
	\begin{align*}
		y_{k,1}& \leq \frac1{2\fkmin p_k\,h}- \frac{x_k}{h} \leq \Delta -\frac1{h}\leq -\Delta,\\
		y_{k, p_k} & \geq -\frac{1}{2\fkmin p_k\,h}
	\end{align*}
	since $h$ has to be sufficiently small and $p_k$ sufficiently large. Consequently, the points $y_{k,1},\ldots,y_{k, p_k}$ form a grid which covers an interval containing $[-\Delta, -1/(2\fkmin p_k\,h)].$ Therefore we could define $\tilde{j_{k*}}$ and $\tilde{j_k^*}$ analogously as before but in the following we only examine the case for $S_1=I_0^{1}\times \ldots \times I_0^{d}$. The arguments for the other subsets are basically the same after the right case distinctions.
	We set $g(\bs x)\defeq(\bs{v}^\top U_m(\bs{x}) )^2$ to save some space in the upcoming calculations. By Assumption \ref{ass:designdensity}, inequality \eqref{lemma:design:points:auxiliary:2} in Lemma \ref{lemma:design:points:auxiliary} for the $y_{k,j}$ and the presentation of $B_{\bs p, h}(\bs x)$ in \eqref{eq:Bp} we get for $\bs{x} \in S_1$ 
	\begin{align*}
		\bs{v}^\top B_{\bs p, h}(\bs x) \, \bs{v}&= \frac1{\bs{p^1}h^d} \bs{v}^\top \bigg(\sum_{\bs j=\bs1}^{\bs p} U_m \big(\bs{y_j}\big) \, U_m^\top \big(\bs{y_j}\big) \, K\big(\bs y_{\bs j}(\bs{x})\big) \bigg) \bs{v} \nonumber\\
		&\geq \frac{\fmin^{\bs1}K_{\min}}{\fmin^{\bs1} \, \bs{p^1}h^d}\sum_{\bs j=\bs 1}^{\bs p} \Big(\bs{v}^\top U_m\big(\bs{y_j}\big) \Big)^2 \one_{[-\bs \Delta, \bs \Delta] } \big(\bs{y_j}\big) \nonumber\\
		&\geq \fmin^{\bs1}K_{\min} \sum_{j_1=1}^{p_1-1}\cdots\sum_{j_d=1}^{p_d-1} g(\bs{y_j}) (y_{1,j_1+1}-y_{1,j_1})\cdots(y_{d,j_d+1}-y_{d,j_d})\one_{[\bs 0, \bs \Delta)}(\bs y_{(j_1,\ldots,j_d)})\\
		&\geq \fmin^{\bs1}K_{\min} \sum_{j_{1}=j_{1*}}^{j_1^*}\cdots\sum_{j_{d}=j_{d*}}^{j_d^*} g(\bs{y_j}) (y_{1,j_1+1}-y_{1,j_1})\cdots(y_{d,j_d+1}-y_{d,j_d}) \defeql A^{(1)}_{\bs p,h}(\bs{x},\bs{v})\,.
	\end{align*}
	Inserting this function in \eqref{eq:glm:vBv}, dropping the scalar $\fmin^{\bs1}K_{\min}$ and oppressing the sups yields
\allowdisplaybreaks
	\begin{align}
		\bigg| \sum_{j_{1}=j_{1*}}^{j_1^*}\cdots\sum_{j_{d}=j_{d*}}^{j_d^*} & g(\bs{y_j}) (y_{1,j_1+1}-y_{1,j_1})\cdots(y_{d,j_d+1}-y_{d,j_d})-\int_{E^1} g(\bs z)\,\dx z_1 \cdots \dx z_d \bigg| \notag \\ 
		&\leq\sum_{k=1}^d \bigg| \int_0^{\Delta_1}\cdots \int_0^{y_{k,k_*}} \cdots \int_0^{\Delta_d} g(\bs z)\, \dx z_1 \cdots \dx z_d \bigg| \notag \\
		& \quad + \sum_{j_1=j_{1*}}^{j_1^*-1}\cdots \sum_{j_k=j_{k*}}^{j_k^*-1} \bigg| \int_{y_{1,j_1}}^{y_{1,j_1+1}}\cdots\int_{y_{d,j_d}}^{y_{d,j_d+1}} \big( g(\bs y_{\bs j}) - g(\bs z) \big) \,\dx z_1 \cdots \dx z_d \bigg| \label{eq:sup:sup} \\
		& \quad + \Delta^{d-1} \sum_{k=1}^d\bigg|g(\bs y_{(k_1^*,\ldots,k_d^*)}) \big(y_{k,j_{k}^*+1}-y_{k,j_k^*}\big) \bigg| \notag\\
		&\quad +\sum_{k=1}^d \bigg|\int_0^{\Delta_1} \cdots \int_{y_{k,j_k^*}}^{\Delta} \cdots \int_0^{\Delta_d} g(\bs z)\,\dx z_1 \cdots \dx z_d \bigg|\, . \notag 
	\end{align}
	We want to bound the order of these terms separately. Firstly, note that $g(\bs z)=\big(\bs{v}^\top U_m(\bs z) \big)^2$ for all coordinate wise bounded $\bs z$. Further $g(\bs z)$ is \name{Lipschitz}-continuous since it is a multivariate polynomial. Therefore it holds that $\abs{g(\bs{y_j}) - g(\bs z)} = \mc O\big((\bs p_{\min}h)^{-1}\big)$. Together with the fact that $\abs{y_{k, j_{k*}}}, \abs{ \Delta - y_{k,j_k^*}}, \abs{ y_{k,j_k+1} - y_{k,j_k} } = \mc O\big((p_k\,h)^{-1}\big)$ and $\sum_{j_k = j_{k*}}^{j_k^*-1} 1 = \mc O(p_k\,h)$ by Lemma \ref{lemma:design:points} we get that 
 \begin{align*}
     \eqref{eq:sup:sup} = \mc O\bigg( \frac{d}{\bs p_{\min}\,h} \bigg)\,.
 \end{align*}

\end{proof}

Now we can prove Lemma \ref{lemma:locpol:weights}.

\begin{proof}[Proof of Lemma \ref{lemma:locpol:weights}]
	By Lemma \ref{lemma:proof:(LP1)} there exist a sufficiently large $p_0 \in \N$ and a sufficiently small  $h_0\in \R_+$ such that for all $\bs p = (p_1,\ldots,p_d)^\top$ with $\bs p_{\min} \geq  \bs p_0$ and $h\in(c/\bs p_{\min}, h_0]$, where $c>0$ is a sufficiently large constant, the local polynomial estimator in \eqref{eq:locpolest} with any order $m \in \N$ is unique and a linear estimator with weights given in \eqref{eq:weights:locpol}. Further, the mentioned Lemma together with Lemma \ref{lemma:design:points}, \citet[Lemma 1.3]{tsybakov2008introduction} and \eqref{eq:kernel} imply \ref{ass:weights:vanish} and \ref{ass:weights:sup} of Assumption \ref{ass:weights} for the weights of the local polynomial estimator. Moreover, Lemma \ref{lemma:proof:(LP1)} and \citet[Proposition 1.12]{tsybakov2008introduction} lead to \ref{ass:weights:polynom} with $\gamma=m$. Note that the mentioned Lemma has to be generalized to the $d$-dimensional case. In the following we make use of 
	\begin{align} \label{eq:inverseBp:2norm}
	\normb{B_{\bs p,h}^{-1}(\bs x)}_{M,2} \leq \frac{1}{\lambda_0} 
	\end{align}
	for all $\bs p\geq \bs p_0$, $h\in(c/\bs p_{\min}, h_0]$ and $\bs x \in [0,1]^d$, where $\norm{M}_{M,2}$ is the spectral norm of a symmetric matrix $M \in \R^{N_{m,d} \times N_{m,d}}$ what is an immediate consequence of Lemma \ref{lemma:proof:(LP1)}.
	
	We continue with the \name{Lipschitz}-condition \ref{ass:weights:lipschitz}, and divide the proof in three cases with respect to the fact whether the weights vanish or not. In the following let $\bs 1 \leq \bs j \leq \bs p$ and $\bs x,\bs y \in [0,1]^d$.
		
	Let $\min \big(\norm{\bs x-\bs x_{\bs j} }_\infty,\normi{\bs y-\bs x_{\bs j}}\big) > h$, then by \ref{ass:weights:vanish} both weights $\wjb xh$ and $\wjb yh$ vanish, and hence \ref{ass:weights:lipschitz} is clear.  
		
	Let $\max \big(\norm{\bs x-\bs x_{\bs j}}_\infty,\norm{\bs y-\bs x_{\bs j}}_\infty\big) > h$. We assume $\norm{\bs y - \bs x_{\bs j}}_\infty>h$ and $\norm{\bs x - \bs x_{\bs j}}_\infty \leq h$ without loss of generality. Once again \ref{ass:weights:vanish} leads to $\wjb yh = 0 $, and hence
	the \name{Cauchy-Schwarz} inequality, $\normz{U_m(\bs 0)}=1$ and \eqref{eq:inverseBp:2norm} imply
	\begin{align*}
	\big| \wjb xh - \wjb yh \big| &= \frac{1}{\bs p^{\bs 1} h^d} \big| U_m^\top(\bs 0) \, B_{\bs p,h}^{-1}(\bs x) \, U_h(\bs x_{\bs j}-\bs x) \big| \,  \big| K_{h}(\bs x_{\bs j}-\bs x) \big| \\
	&\leq \frac1{\bs p^{\bs 1} h^d} \normb{U_m(\bs 0)}_2 \, \normb{B_{\bs p, h}^{-1}(x) \, U_h(\bs{x_j}- \bs x)}_2 \,  \absb{K_{h}(\bs{x_j}- \bs x)} \\
	&\leq \frac1{\bs p^{\bs 1} h^d}  \normb{B_{\bs p,h}^{-1}(\bs x)}_{M,2} \, \normb{U_h(\bs{x_j}- \bs x)}_2 \,  \big| K_{h}(\bs{x_j}- \bs x) \big| \\
	&\leq \frac{1}{\lambda_0  \, \bs p^{\bs 1} h^d} \big| K_{h}(\bs{x_j}- \bs x) \big|  \Bigg(\sum_{|\bs r|=0}^m \bigg(\frac{(\bs{x_j}- \bs x)^r}{\bs{h^r} \bs r!}\bigg)^2\Bigg)^{\frac12} \\
	&\leq \frac{c_1}{\lambda_0 \, \bs p^{\bs 1} h^d} \big| K_{h}(\bs{x_j}- \bs x) \big| 
	\end{align*}
	for a positive constant $c_1>0$. In the last step we used the fact that the sum can't get arbitrarily large, also for  component wise small $h$, because the kernel $K$ has compact support in $[-1,1]^d$. If $\norm{\bs x-\bs y}_\infty> h$, we use the upper bound of the kernel function in \eqref{eq:kernel} and obtain
		\begin{align*}
		\absb{\wjb xh - \wjb yh}  \leq \frac{c_1 \, K_{\max}}{\lambda_0 \, \bs p^{\bs 1} h^d} \,.
	\end{align*}
	Conversely, if $\norm{\bs x-\bs y}_\infty \leq h$, we add $K_{h}(\bs{x_j}-\bs y) = 0$ and estimate
	\begin{align*}
		\absb{\wjb xh - \wjb yh} \leq \frac{c_1}{\lambda_0 \, \bs p^{\bs 1} h^d} \big| K_{h}(\bs{x_j}- \bs x) - K_{h}(\bs{x_j}-\bs y)\big| \leq  \frac{c_1 \, L_K}{\lambda_0 \, \bs p^{\bs 1} h^d} \, \cdot\,\frac{\norm{\bs x-\bs y}_\infty}h \, ,
	\end{align*}
	because of the \name{Lipschitz}-continuity of the kernel. So in total we get
	\begin{align} \label{proof:locpol:weights:1}
		\Big| \wjb xh - \wjb yh \Big| \leq \frac{c_1 \, \max(K_{\max},L_K)}{\lambda_0 \,\bs p^{\bs 1} h^d} \bigg(\frac{\norm{\bs x-\bs y}_\infty}h \wedge 1 \bigg) \,.
	\end{align}

	Let $\max \big(\norm{\bs x-\bs x_{\bs j}}_\infty,\norm{\bs y-\bs x_{\bs j}}_\infty\big) \leq h$, then both weights don't vanish and we have to show a proper \name{Lipschitz}-property for
	\begin{align*} 
		\wjb xh = \frac1{\bs p^{\bs 1} h^d} U_m^\top(\bs 0) \, B_{\bs p,h}^{-1}(\bs x) \, U_h(\bs{x_j}- \bs x)\,  K_{h}(\bs{x_j}- \bs x)\,.
	\end{align*}
	The kernel $K$ and polynomials on compact intervals are \name{Lipschitz}-continuous, hence it suffices to show that $B_{\bs p, h}^{-1}(x)$ has this property as well. Then the weights are products of bounded \name{Lipschitz}-continuous functions and, thus, also \name{Lipschitz}-continuous. The entries of the matrix $B_{\bs p, h}(\bs x)$, which is defined in \eqref{eq:Bp}, considered as functions from $[0,1]$ to $\R$ are \name{Lipschitz}-continuous. Indeed they are of order one by using Assumption \ref{ass:design1}.  Hence, the row sum norm $\normiM{B_{\bs p, h}(\bs x)-B_{\bs p, h}(\bs y)}$ is a sum of these \name{Lipschitz}-continuous functions and, in consequence, there exists a positive constant $L_\infty >0$ such that
	\begin{align*}
	\normiM{B_{\bs p, h}(\bs x)-B_{\bs p, h}(\bs y)} \leq (N_{m,d}+1) L_\infty \, \,\frac{\norm{\bs x-\bs y}_\infty}h\,
	\end{align*}
	is satisfied. Since the matrices are symmetric the column sum norm is equal to the row sum norm and we obtain
	\begin{align*}
		\normb{B_{\bs p, h}(\bs x)-B_{\bs p, h}(\bs y)}_{M,2} \leq \normb{B_{\bs p, h}(\bs x)-B_{\bs p, h}(\bs y)}_{M,\infty}\leq (N_{m,d}+1) L_\infty  \, \frac{\norm{\bs x-\bs y}_\infty}h \,.
	\end{align*}
	This leads together with \eqref{eq:inverseBp:2norm} and the submultiplicativity of the spectral norm to
	\begin{align*}
		\normb{ B_{\bs p, h}^{-1}(x)-B_{\bs p, h}^{-1}(y) }_{M,2} &= \normb{ B_{\bs p, h}^{-1}(y) \big( B_{\bs p, h}(\bs y)-B_{\bs p, h}(\bs x) \big) B_{\bs p, h}^{-1}(x)}_{M,2} \\
		&\leq \normb{ B_{\bs p, h}^{-1}(y)}_{M,2} \, \normb{ B_{\bs p, h}(\bs y)-B_{\bs p, h}(\bs x)}_{M,2} \, \normb{ B_{\bs p, h}^{-1}(x)}_{M,2} \\
		&\leq \frac{(N_{m,d}+1) L_\infty }{\lambda_0^2} \, \,\frac{\norm{\bs x-\bs y}_\infty}h\, \,,
	\end{align*}
	which is the \name{Lipschitz}-continuity of $B_{\bs p, h}^{-1}(x)$ with respect to the spectral norm. So finally there exists a positive constant $c_2>0$ such that 
	\begin{align*}
    	 \absb{\wjb xh - \wjb yh}  \leq  \frac{c_2}{2 \bs p^{\bs 1} h^d} \, \,\frac{\norm{\bs x-\bs y}_\infty}h \leq \frac{c_2}{\bs p^{\bs 1} h^d}
	\end{align*}
	is satisfied. Here we used in the last step that $\max \big(\norm{\bs x-\bs x_{\bs j}}_\infty,\normi{\bs y-\bs x_{\bs j}}\big) \leq h$ implies $\norm{\bs x-\bs y}_\infty \leq2\,h$.
	
Finally we choose $C_3 \geq \max\big( c_1 \max(K_{\max},L_K)/\lambda_0, c_2\big)$ and obtain with \eqref{proof:locpol:weights:1} finally Assumption \ref{ass:weights:lipschitz}. 
\end{proof}

\section{Proof of Lemma \ref{theorem:estimation:rates1}}
\label{sec:proof:bias}

\begin{proof}[Proof of Lemma \ref{theorem:estimation:rates1} i).]
	
	For the first part we want to use Taylor expansion and that the weights of the estimator reproduce polynomials of a certain degree. For certain $\theta_{\bs j} \in [0,1], \bs 1 \leq \bs j \leq \bs p,$ s.t. $\bs{\tau_j} \defeq \bs x + \theta_{\bs j}(\bs{x_j} - \bs x) \in [0,1]^d$  it holds
	\allowdisplaybreaks
	\begin{align*}
		\sum_{\bs j = \bs 1}^{\bs p} \wjb{x}{h}& \mu(\bs x_{\bs j}) - \mu (\bs x) = \sum_{\bs j = \bs 1}^{\bs p} \wjb{x}{h}( \mu(\bs x_{\bs j}) - \mu (\bs x))\\
		&=\sum_{\bs j = \bs 1}^{\bs p} \Big(\sum_{|\bs k|= 1}^{\gamma-1}D^{\bs k} \mu(\bs x) \frac{(\bs x_{\bs j}-\bs x)^{\bs k}}{\bs k !} +  \sum_{|\bs k|= \gamma}D^{\bs k}\mu(\bs \tau_{\bs j})\frac{(\bs x_{\bs j}-\bs x)^{\bs k}}{\bs k !}\Big) \wjb{x}{h}\\
		&=\sum_{\bs j = \bs 1}^{\bs p} \sum_{|\bs k|= \gamma}D^{\bs k}\mu(\bs \tau_{\bs j})\frac{(\bs x_{\bs j}-\bs x)^{\bs k}}{\bs k !} \wjb{x}{h}\tag{Taylor and \ref{ass:weights:polynom}}\\
		&=\sum_{\bs j = \bs 1}^{\bs p}\Bigg( \sum_{|\bs k|= \gamma}\bigg(D^{\bs k}\mu(\bs \tau_{\bs j})-D^{\bs k}\mu(\bs x)\bigg)\frac{(\bs x_{\bs j}-\bs x)^{\bs k}}{\bs k !}\Bigg) \wjb{x}{h}\tag{by \ref{ass:weights:polynom}}\\
		& \leq \sum_{\bs j = \bs 1}^{\bs p}\Bigg( \sum_{|\bs k|= \gamma}\norm{\theta(\bs{x_j} - \bs x)}_\infty^{\alpha-\gamma}\frac{(\bs x_{\bs j}-\bs x)^{\bs k}}{\bs k !}\Bigg) \wjb{x}{h}\\
		&\leq  \sum_{\bs j = \bs 1}^{\bs p}\norm{\bs x_{\bs j}- \bs x}_\infty^\alpha\Bigg( \sum_{|\bs k|= \gamma}\frac{1}{\bs k !}\Bigg) \wjb{x}{h}\\
		&\leq  \norm{\bs h}_\infty^\alpha \sum_{|\bs k|= \gamma}\frac{C_1}{\bs k !}.
	\end{align*}
\end{proof}

\newpage
\section{Additional numerical results}

\begin{figure}[h!]
	\centering
	\includegraphics[width=\linewidth]{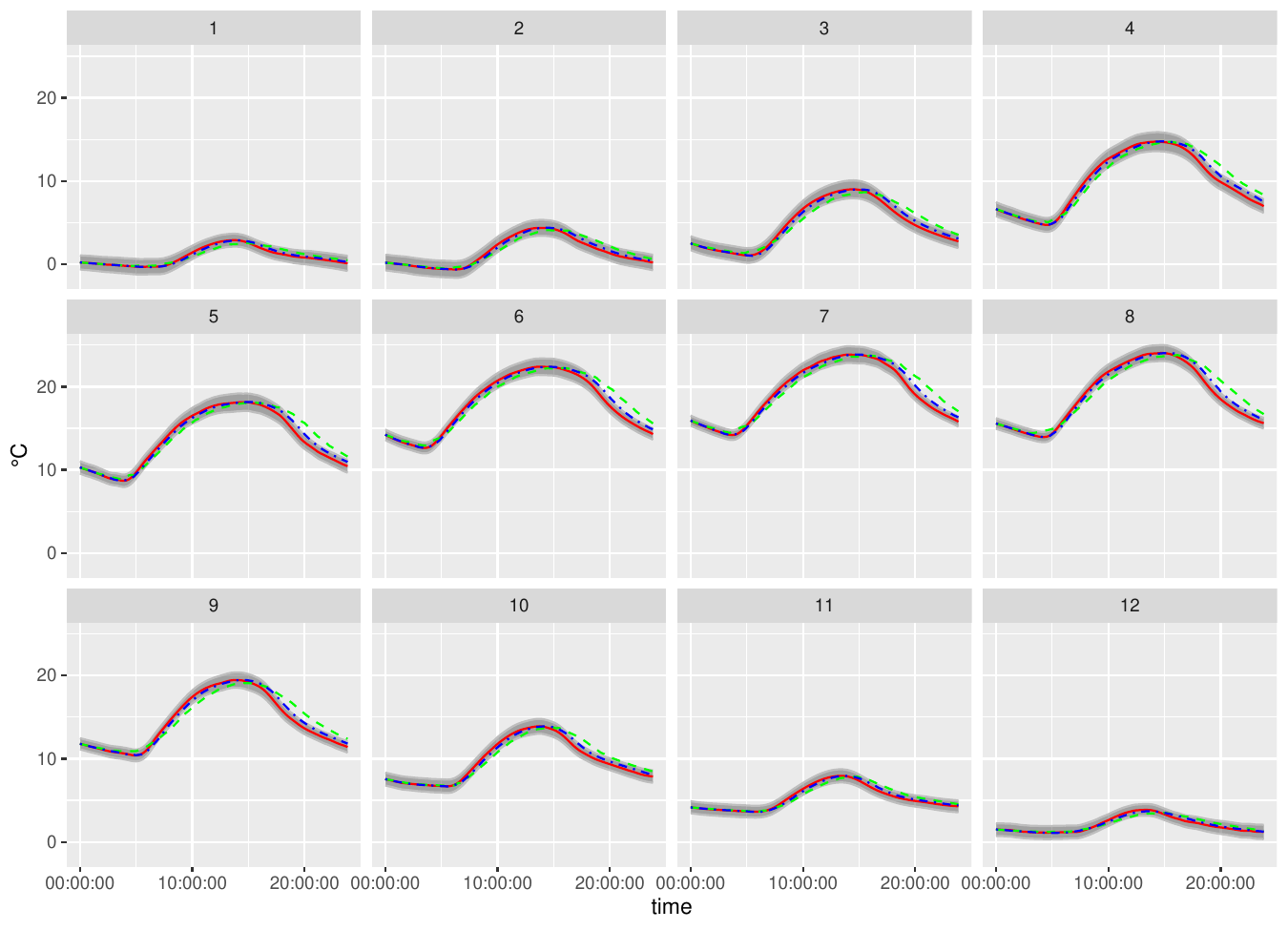}
	\caption{\small LP estimator with $p=144$ (10-minute data, red line), $p=24$ (one houre data, blue dotted/dashed line) and $p = 12$ (tow hour data, green dashed line). Confidence bands for the $p=144$ curve from \cite{liebl2019fast}.}
	\label{fig:weather_conf_bands}
\end{figure}

\section{Gaussian approximation for non sub-Gaussian errors} \label{sec:app:gaussian_approximation}

In the following we determine the stochastic rate of convergence for the error term $I_2^{n,p,h}$ for errors which only satisfy mild moment assumptions. We restrict ourselves to one-dimensions: $d=1$. To his end we consider the situation of Assumption \ref{ass:weights}. For the Gaussian approximation we need a further property for the weights. Let $\Cink>0$ be a constant not depending on $n,p$ and $h$. Then for $x\in [0,1]$ and $j = 1, \ldots, p-1,$
\begin{enumerate}[label=\normalfont{(W\arabic*)},leftmargin=9.9mm]
    \setcounter{enumi}{5}
    \item\label{ass:weights:increments} $\displaystyle \abs{\wj xh - w_{j+1}(x;h)} \leq \frac{\Cink}{(p\,h)^2}\ind_{ [x_j - h, x_j+h] \cup [x_{j+1}-h, x_{j+1}+h] }(x)\,. $ 
\end{enumerate}
In Lemma \ref{lem:weight:increments} we show that in one dimensions that the weights of the local polynomial estimator satisfy this assumption. Further we replace Assumption \ref{ass:model} as follows. 

\begin{assumption}\label{ass:non-subGaussian-Errors}
    The random variables $\{\e_{i,\bs j} \mid 1 \leq i \leq n,\, 1 \le j \le  p\}$ are centered, independent and independent of the processes $Z_1,\dotsc,Z_n$. Further we assume that for a $q\geq 4$ it holds $\sup_{i = 1, \ldots, n, \, j = 1,\ldots, p}\expec[\abs{\varepsilon_{i,j}}^q]\leq C <\infty$ for a constant $C>0$.
\end{assumption}

\begin{proposition}\label{prop:gaussian_approximation}
    In model \eqref{eq:model} with $d=1$, under Assumption $\ref{ass:design1}$ let the errors $\epsilon_{i,j}$ satisfy Assumption \ref{ass:non-subGaussian-Errors}. Then 
    \begin{align}
        I^{n,p,h}(x) & = \sum_{j = 1}^p \wj xh \frac1n\sum_{i = 1}^n \epsilon_{i,j}\,, \quad x \in [0,1]\,, 
    \end{align}
    with weights $\wj xh$ satisfying Assumption \ref{ass:weights} and \ref{ass:weights:increments} satisfies
    \begin{align}
        \normb{I^{n,p,h}}_\infty & = \mc O_\prob \Bigg( \sqrt{\frac{\log(1/h)}{n\,p\,h}} + \frac{p^{1/q}}{\sqrt n\, p\,h}\Bigg)\,.
    \end{align}
\end{proposition}

\begin{remark}
    Comparing the two rates yields that rate of the Gaussian approximation is slower then the initial rate if $h \lesssim p^{2/q}/(p\,\log(1/h)) \sim  p^{2/q}/(p\,\log(p)) $ due to 
    \begin{align*}
        \frac{p^{1/q}}{\sqrt{n}\,p\,h } \gtrsim \frac{p^{1/q}}{\sqrt{n\,p\,h}}\cdot \frac{\sqrt{\log(1/h)}}{p^{1/q}} \simeq \sqrt{\frac{\log(1/h)}{n\,p\,h}}\,
    \end{align*}
    and vice versa. Therefore we need to compare the optimal choice for the rate of convergence $h^\star$ in \eqref{eq:optimal:bandwidth} with $p^{2/q}/(p\,\log(1/p))$. This results in 
    \begin{align*}
         &  p \gtrsim \bigg( \frac{n}{\log (n)}\bigg)^{\frac{q}{2(\alpha(q-2)+1)}}.
    \end{align*}
    If this condition on $p$ is satisfied the $\sqrt n$-rate and the asymptotic normality are still guaranteed. For the worst case covered by the theory, $ q= 4$, this threshold for $p$ is larger then $n^{1/(2\alpha)}\log(n)$ and therefore the $\sqrt n$-rate can only be obtained for ratewise larger $p$ then before. However for large $q$ we have  
    \begin{align*}
        \bigg( \frac{n}{\log (n)}\bigg)^{\frac{q}{2(\alpha(q-2)-1)}} \stackrel{q\to \infty}{\;\longrightarrow\;} \bigg( \frac n {\log(n)}\bigg)^{\frac1{2\alpha}} \,,
    \end{align*}
    and in consequence the initial thresholds from Remark \ref{rem:regimes_in_the_rate} remain valid.
    

\begin{table}[t!]
	\begin{minipage}[b]{.45\linewidth}
        \centering
        \begin{tabular}{c|rrrr}
          \hline
         {\tiny \diagbox np} & $5$ & $10$ & $25$ & $50$ \\ 
          \hline
        $1$ & 1.011 & 1.018 & 0.999 & 1.024 \\ 
          $5$ & 1.000 & 1.022 & 1.006 & 1.020 \\ 
          $10$ & 1.010 & 1.007 & 1.005 & 0.996 \\ 
          $25$ & 0.999 & 1.013 & 1.014 & 1.009 \\ 
          $50$ & 1.002 & 1.013 & 1.001 & 0.993 \\ 
           \hline
        \end{tabular}
        \caption{\small Quotient of $\norm{\sum_{j = 1}^p  \wj \cdot h \bar \epsilon_{j,n}}_\infty$ with $\bar \epsilon_{j,n} \sim \mc N(0,n^{-1})$ divided by $\norm{\sum_{j = 1}^p  \wj \cdot h \frac1n\sum_{i=1}^n t_{i,j}}_\infty$ with $t_{i,j} \sim \sqrt{3/5}\, t_5$.}
        \label{tab:quot_I_error}
	\end{minipage}
\hspace{0.5cm}
	\begin{minipage}[b]{.45\linewidth}
        \centering
        \begin{tabular}{c|rrrr}
          \hline
        {\tiny \diagbox{n}{p} }& $5$ & $10$ & $25$ & $50$ \\ 
          \hline
        $1$& 0.975 & 0.929 & 0.858 & 0.799 \\ 
          $5$ & 0.992 & 0.972 & 0.933 & 0.904 \\ 
          $10$ & 0.998 & 0.979 & 0.962 & 0.946 \\ 
          $25$ & 0.993 & 0.996 & 0.979 & 0.973 \\ 
          $50$ & 0.995 & 1.002 & 0.990 & 0.980 \\ 
           \hline
        \end{tabular}
        \caption{\small Quotient of $\max_{j}\abs{\bar \epsilon_{j,n}}$ divided by $ \max_{j}\abs{\frac1n\sum_{i=1}^n t_{i,j}}$ with $\bar \epsilon_{j,n} \sim \mc N(0,n^{-1})$ and $t_{i,j} \sim \sqrt{3/5} \,t_5$.\\ [0.7mm]}
        \label{tab:quot_mean_error}
	\end{minipage}
\end{table}
    
Table \ref{tab:quot_I_error} contains the quotient of the error term $I_2^{n,p,h}$ with normal (nominator) and standardized $t_5$ distributed errors (denominator) for different $n, p\in \N$. Even for very small sample sizes, the smoothing already suffices for both terms to behave very similarly. Differences can however be seen without smoothing, which gives a further argument against the use of interpolation estimators. 

     
\end{remark}

\newpage

\begin{lemma} \label{lem:gaussian_appr}
    In model \eqref{eq:model} under Assumption $\ref{ass:design1}$ 
    let $X_1, \ldots, X_p$ be independent random variables which satisfy $ \expec[X_1] = 0 $, $\expec[X_1^2] = 1$ and $\expec [\abs{X_j}^q] \leq C < \infty$, as required in Assumption \ref{ass:non-subGaussian-Errors}. Consider the term 
    \begin{align}
        E^{p,h} (x) & = \sum_{j = 1}^p \wj xh X_j\,, \quad x \in [0,1]\,, 
    \end{align}
    where the weights $\wj xh$ satisfy Assumption \ref{ass:weights} and \ref{ass:weights:increments}. This term becomes small with the rate
    \begin{align}
        \normb{E^{p,h}}_\infty & = \mc O_\prob \Bigg( \sqrt{\frac{\log(1/h)}{p\,h}} + \frac{p^{1/q}}{p\,h}\Bigg)\,.
    \end{align}
\end{lemma}

    \begin{proof}[Proof of Lemma \ref{lem:gaussian_appr}]
    Consider $T>0$ and $q\geq 4$. Then from \citet[Corollary 4, §5]{sakhanenko1991accuracy} there exists a Brownian motion $(B_t)_{t \in [0,\infty)}$ such that 
    \begin{align*}
        \prob\Big( \max_{k = 1,\ldots, p }\absb{\sum_{j = 1}^k X_{j} - B_k} > q\,T\,p^{1/q}\Big) & 
        \leq \big( p\,T^{q}\big)^{-1} \sum_{j = 1}^p  \expec\big[ \abs{X_j}^q\big] + \prob \Big( \max_{j = 1,\ldots, p} \abs{X_j} > T\, q^{1/q}\Big) \\
        & \leq \frac{C}{T^q} + \prob\Big( \max_{j = 1,\ldots, p} \abs{X_j} > T\,p^{1/q} \Big)\,.
    \end{align*}
    Now, by using the independence and the existence of the fourth moment of the errors, we get
    \begin{align*}
        \prob\Big( \max_{j = 1,\ldots, p}\abs{X_j} > T\,p^{1/q} \Big) & = 1 - \prod_{j = 1}^p \prob\big(  \abs{X_j }\leq T \,p^{1/q} \big) \\
        & \leq 1 - \prod_{j = 1}^p \bigg( 1 - \frac{\expec[\abs{X_j}^q]}{p\,T^{q}}\bigg) \\
        & \leq 1 - \bigg( 1  - \frac{C\,T^{-q}}{p}\bigg) ^p \stackrel{p \to \infty}\to 1 - \exp\big( -C\,T^{-q}\big)\,.
    \end{align*}
    Therefore we get 
    \begin{align}
        \max_{k = 1,\ldots, p} \abss{ \sum_{j = 1}^k X_j - B_k } = \mc O_\prob\big( p^{1/q}\big)\,. \label{eq:partial_sum_approx}
    \end{align}
    Now set $N_j \defeq B_j - B_{j-1},j = 1,\ldots, p$ with $B_0 = 0$ by writing $\eta_{j,n} \defeq X_j - N_j$ we have $\eta_{j,n} = \sum_{ i = 1}^j \eta_{i,n} - \sum_{i = 1}^{j-1} \eta_{i,n}$. By setting $\sum_{i = 1}^0 \eta_{i,n}= 0$ we get
    \begin{align}
		\Big|\sum_{j=1}^p \eta_j\Big|&= \bigg|\sum_{j=1}^p X_j - B_p\bigg|\,.\label{eqn:inequality_eta}
	\end{align} 
	With the properties of the weights \ref{ass:weights:sup} and \ref{ass:weights:increments} and the previous statement we get
    \allowdisplaybreaks
	\begin{align*}
		\sup\limits_{x \in [0,1]}\Big|E^{p,h}(x)&- \sum_{j = 1}^p \wj xh  N_j\Big|\\
		&=\sup\limits_{x \in [0,1]}\Big|w_{p}(x;h)\sum_{j=1}^p\eta_j + \sum_{j=1}^{p-1}\big(\wj{x}{h}-w_{j+1}(x;h)\big)\sum_{i=1}^j\eta_i\Big|\\
		&\leq \sup\limits_{x \in [0,1]} \max\limits_{j=1,\ldots,p}|\wj{x}{h}| \Big|\sum_{j=1}^p \eta_j\Big|\\
		&\qquad+\max\limits_{j=1,\ldots,p}\Big| \sum_{i=1}^j\eta_i\Big| \sup\limits_{x \in [0,1]} \Big|\sum_{j=1}^{p-1}\big( \wj{x}{h}-w_{j+1}(x;h)\big)\Big|\\
		&\leq \sup\limits_{x \in [0,1]} \frac{\Cmax}{ph} \bigg|\sum_{j=1}^p  X_j - B(p)\bigg|\tag{with \ref{ass:weights:sup} and \eqref{eqn:inequality_eta}}\\
		&\qquad+\max\limits_{j=1,\ldots,p} \bigg|\sum_{i=1}^j  X_i - B(j)\bigg| \sup\limits_{x \in [0,1]} \Big|\sum_{j=1}^{p-1}\big( \wj{x}{h}-w_{j+1}(x;h)\big)\Big|\\
		&\leq \bigg( \frac{\Cmax}{ph}  + \frac{\Ccard\,\Cink}{p\,h} \bigg) \cdot \mc O_\prob\big( p^{1/q}\big)\tag{with Assumption \ref{ass:design1}, \eqref{eq:partial_sum_approx}, \ref{ass:weights:vanish} and \ref{ass:weights:increments}}\\
        & = \mc O_\prob  \bigg( \frac{p^{1/q}}{p\,h}\bigg) \,.
	\end{align*}
    Applying  Lemma \ref{theorem:estimation:rates1} ii) to the term $\sum_{j = 1}^p \wj xh N_j$ for the case $n = 1$ and $d = 1$ we get that the rate of this term is bounded by $\sqrt{\log(1/h) / (p\,h)}$ which concludes the proof. 
\end{proof}

\begin{proof}[Proof of Proposition \ref{prop:gaussian_approximation}]
    Since $\sigma \geq \sigma_j$ for all $j = 1, \ldots, p$, we have 
    \begin{align*}
        \normb{I^{n,p,h}}_\infty & \leq \frac\sigma{\sqrt n}\,\sup_{x \in [0,1]} \abss{\sum_{j = 1}^p \wj xh \frac{1}{\sigma_j\,\sqrt n} \sum_{ i = 1}^n \epsilon_{i,j}}\,.
    \end{align*}
    Setting $\tilde \epsilon_{j} = (\sigma_j\,\sqrt{n})^{-1} \sum_{i = 1}^n \epsilon_{i,j}$ we have that $\expec[\tilde \epsilon_j] = 0$ and $\expec[\tilde \epsilon_j^2] = 1$ by by  Assumption \ref{ass:non-subGaussian-Errors}. By using the Rosenthal inequality \citep{rosenthal1970subspaces} we further get for independent, centered random variables 
    $\expec\big[ \abs{\tilde \epsilon_{j}}^q\big] \leq K_q\, \max\big\{ C\,n^{1-q/2}, 1 \big\}$, where $K_q>0$ is a constant only depending on $q$. Now $\tilde \epsilon_j$ suffice Assumption \ref{ass:non-subGaussian-Errors} and therefore the claim follows from Lemma \ref{lem:gaussian_appr}.
\end{proof}

\begin{lemma}\label{lem:weight:increments}
    Consider the same setup as in Lemma \ref{lemma:locpol:weights} in the one dimensional case. Then the weights the local polynomial estimator also suffice \ref{ass:weights:increments}.
\end{lemma}

\begin{proof}
    With the upper bound for $\norm{B^{-1}_{p, h}(x)}_{M,2} \leq \lambda_0^{-1}$ from \eqref{eq:inverseBp:2norm} and the Lipschitz continuity of $U_m(x)K( x)$ we get
	\begin{align*}
		\big|\wj xh-w_{j+1}(x, h)\big|
		&\leq \frac{1}{ph}\normb{B_{p,h}^{-1}(x)\big(U_h\big(x_j-x\big)K_h\big(x_j-x\big)-U_h\big(x_{j+1}-x\big)K_h\big(x_{j+1}-x\big)\big)}_2\\
		&\leq \frac{L}{\lambda_0 \,p\,h} 		\frac{|x_j-x_{j+1}|}{h}\,\ind_{[x_j-h,x_j+h]\cup [x_{j+1}-h,x_{j+1}+h]}(x)\\
        & \leq \frac{L}{\lambda_0\,f_{\min} \,p^2\,h^2}\,\ind_{[x_j-h,x_j+h]\cup [x_{j+1}-h,x_{j+1}+h]}(x)\,,
	\end{align*}
	due to the Cauchy-Schwarz inequality, $\norm{U(0)}_2=1$ and \eqref{lemma:design:points:auxiliary:2}. Setting $\Cink = L/(\lambda_0\,f_{\min})$ yields the claim.
\end{proof}

\end{document}